\definecolor{darkgreen}{rgb}{0,0.45,0}
\newcommand{\cat}[1]{\mathbf{#1}}
\newcommand{\tcat}[1]{\mathbf{#1}}
\newcommand{\op}{\mathrm{op}}
\newcommand{\id}{\mathrm{id}}
\newcommand{\thg}{{\mathord{\text{--}}}}
\newcommand{\wo}{\mathrel{\pitchfork}}
\newcommand{\abs}[1]{{\left|{#1}\right|}}
\newcommand{\set}[2]{\left\{\,#1 \ \vrule\  #2\,\right\}}
\newcommand{\spn}[1]{{\left<{#1}\right>}}
\newcommand{\defeq}{\mathrel{\mathop:}=}
\newcommand{\cd}[2][]{\vcenter{\hbox{\xymatrix#1{#2}}}}
\newcommand{\A}{{\mathcal A}}
\newcommand{\B}{{\mathcal B}}
\newcommand{\C}{{\mathcal C}}
\renewcommand{\L}{{\mathcal L}}
\newcommand{\M}{{\mathcal M}}
\renewcommand{\O}{{\mathcal O}}
\newcommand{\R}{{\mathcal R}}
\newcommand{\T}{{\mathcal T}}
\newcommand{\U}{{\mathcal U}}
\newcommand{\V}{{\mathcal V}}
\newcommand{\X}{{\mathcal X}}
\newcommand{\xtor}[1]{\cdl[@1]{{} \ar[r]|-{\object@{|}}^{#1} & {}}}
\def\hookleftarrowfill@{\arrowfill@\leftarrow\relbar{\relbar\joinrel\rhook}}
\def\twoheadleftarrowfill@{\arrowfill@\twoheadleftarrow\relbar\relbar}
\def\leftbararrowfill@{\arrowdoublefill@{\leftarrow\mkern-5mu}\relbar\mapstochar\relbar\relbar}
\def\Leftbararrowfill@{\arrowdoublefill@{\Leftarrow\mkern-2mu}\Relbar\Mapstochar\Relbar\Relbar}
\def\leftringarrowfill@{\arrowdoublefill@{\leftarrow\mkern-3mu}\relbar{\mkern-3mu\circ\mkern-2mu}\relbar\relbar}
\def\lefttriarrowfill@{\arrowfill@{\mathrel\triangleleft\mkern0.5mu\joinrel\relbar}\relbar\relbar}
\def\Lefttriarrowfill@{\arrowfill@{\mathrel\triangleleft\mkern1mu\joinrel\Relbar}\Relbar\Relbar}
\def\hookrightarrowfill@{\arrowfill@{\lhook\joinrel\relbar}\relbar\rightarrow}
\def\twoheadrightarrowfill@{\arrowfill@\relbar\relbar\twoheadrightarrow}
\def\rightbararrowfill@{\arrowdoublefill@{\relbar\mkern-0.5mu}\relbar\mapstochar\relbar\rightarrow}
\def\Rightbararrowfill@{\arrowdoublefill@{\Relbar\mkern-2mu}\Relbar\Mapstochar\Relbar\Rightarrow}
\def\rightringarrowfill@{\arrowdoublefill@\relbar\relbar{\mkern-2mu\circ\mkern-3mu}\relbar{\mkern-3mu\rightarrow}}
\def\righttriarrowfill@{\arrowfill@\relbar\relbar{\relbar\joinrel\mkern0.5mu\mathrel\triangleright}}
\def\Righttriarrowfill@{\arrowfill@\Relbar\Relbar{\Relbar\joinrel\mkern1mu\mathrel\triangleright}}
\def\leftrightarrowfill@{\arrowfill@\leftarrow\relbar\rightarrow}
\def\mapstofill@{\arrowfill@{\mapstochar\relbar}\relbar\rightarrow}
\renewcommand*\xleftarrow[2][]{\ext@arrow 20{20}0\leftarrowfill@{#1}{#2}}
\providecommand*\xLeftarrow[2][]{\ext@arrow 60{22}0{\Leftarrowfill@}{#1}{#2}}
\providecommand*\xhookleftarrow[2][]{\ext@arrow 10{20}0\hookleftarrowfill@{#1}{#2}}
\providecommand*\xtwoheadleftarrow[2][]{\ext@arrow 60{20}0\twoheadleftarrowfill@{#1}{#2}}
\providecommand*\xleftbararrow[2][]{\ext@arrow 10{22}0\leftbararrowfill@{#1}{#2}}
\providecommand*\xLeftbararrow[2][]{\ext@arrow 50{24}0\Leftbararrowfill@{#1}{#2}}
\providecommand*\xleftringarrow[2][]{\ext@arrow 10{26}0\leftringarrowfill@{#1}{#2}}
\providecommand*\xlefttriarrow[2][]{\ext@arrow 80{24}0\lefttriarrowfill@{#1}{#2}}
\providecommand*\xLefttriarrow[2][]{\ext@arrow 80{24}0\Lefttriarrowfill@{#1}{#2}}
\renewcommand*\xrightarrow[2][]{\ext@arrow 01{20}0\rightarrowfill@{#1}{#2}}
\providecommand*\xRightarrow[2][]{\ext@arrow 04{22}0{\Rightarrowfill@}{#1}{#2}}
\providecommand*\xhookrightarrow[2][]{\ext@arrow 00{20}0\hookrightarrowfill@{#1}{#2}}
\providecommand*\xtwoheadrightarrow[2][]{\ext@arrow 03{20}0\twoheadrightarrowfill@{#1}{#2}}
\providecommand*\xrightbararrow[2][]{\ext@arrow 01{22}0\rightbararrowfill@{#1}{#2}}
\providecommand*\xRightbararrow[2][]{\ext@arrow 04{24}0\Rightbararrowfill@{#1}{#2}}
\providecommand*\xrightringarrow[2][]{\ext@arrow 01{26}0\rightringarrowfill@{#1}{#2}}
\providecommand*\xrighttriarrow[2][]{\ext@arrow 07{24}0\righttriarrowfill@{#1}{#2}}
\providecommand*\xRighttriarrow[2][]{\ext@arrow 07{24}0\Righttriarrowfill@{#1}{#2}}
\providecommand*\xmapsto[2][]{\ext@arrow 01{20}0\mapstofill@{#1}{#2}}
\providecommand*\xleftrightarrow[2][]{\ext@arrow 10{22}0\leftrightarrowfill@{#1}{#2}}
\providecommand*\xLeftrightarrow[2][]{\ext@arrow 10{27}0{\Leftrightarrowfill@}{#1}{#2}}
\newcommand{\twocong}[2][0.5]{\ar@{}[#2] \save ?(#1)*{\cong}\restore}
\newcommand{\twoeq}[2][0.5]{\ar@{}[#2] \save ?(#1)*{=}\restore}
\newcommand{\rtwocell}[3][0.5]{\ar@{}[#2] \ar@{=>}?(#1)+/l 0.2cm/;?(#1)+/r 0.2cm/^{#3}}
\newcommand{\ltwocell}[3][0.5]{\ar@{}[#2] \ar@{=>}?(#1)+/r 0.2cm/;?(#1)+/l 0.2cm/^{#3}}
\newcommand{\ltwocello}[3][0.5]{\ar@{}[#2] \ar@{=>}?(#1)+/r 0.2cm/;?(#1)+/l 0.2cm/_{#3}}
\newcommand{\dtwocell}[3][0.5]{\ar@{}[#2] \ar@{=>}?(#1)+/u  0.2cm/;?(#1)+/d 0.2cm/^{#3}}
\newcommand{\dltwocell}[3][0.5]{\ar@{}[#2] \ar@{=>}?(#1)+/ur  0.2cm/;?(#1)+/dl 0.2cm/^{#3}}
\newcommand{\drtwocell}[3][0.5]{\ar@{}[#2] \ar@{=>}?(#1)+/ul  0.2cm/;?(#1)+/dr 0.2cm/^{#3}}
\newcommand{\dthreecell}[3][0.5]{\ar@{}[#2] \ar@3{->}?(#1)+/u  0.2cm/;?(#1)+/d 0.2cm/^{#3}}
\newcommand{\utwocell}[3][0.5]{\ar@{}[#2] \ar@{=>}?(#1)+/d 0.2cm/;?(#1)+/u 0.2cm/_{#3}}
\newcommand{\dtwocelltarg}[3][0.5]{\ar@{}#2 \ar@{=>}?(#1)+/u  0.2cm/;?(#1)+/d 0.2cm/^{#3}}
\newcommand{\utwocelltarg}[3][0.5]{\ar@{}#2 \ar@{=>}?(#1)+/d  0.2cm/;?(#1)+/u 0.2cm/_{#3}}
\newcommand{\pullbackcorner}[1][dr]{\save*!/#1+1.2pc/#1:(1,-1)@^{|-}\restore}
\theoremstyle{plain}
\newtheorem{Thm}{Theorem}[section]
\newtheorem{Prop}[Thm]{Proposition}
\newtheorem{Cor}[Thm]{Corollary}
\theoremstyle{definition}
\newtheorem{Defn}[Thm]{Definition}
\theoremstyle{remark}
\newtheorem{Ex}[Thm]{Example}
\begin{document}
\leftmargini=2em
\title{Homomorphisms of higher categories}
\author{Richard Garner}
\address{Department of Pure Mathematics and Mathematical Statistics, University of Cambridge, Cambridge CB3 0WB, UK}
\email{rhgg2@cam.ac.uk} \subjclass[2000]{Primary: 18D05, 55U35}
\date{\today}
\begin{abstract}
We describe a construction that to each algebraically specified notion of
higher-dimensional category associates a notion of \emph{homomorphism} which
preserves the categorical structure only up to weakly invertible higher cells.
The construction is such that these homomorphisms admit a strictly associative
and unital composition. We give two applications of this construction. The
first is to tricategories; and here we do not obtain the trihomomorphisms
defined by Gordon, Power and Street, but rather something which is equivalent
in a suitable sense. The second application is to Batanin's weak
$\omega$-categories.
\end{abstract}
 \maketitle

\section{Introduction}
The purpose of this paper is to describe a notion of
homomorphism for weak higher-dimensional categories. Let us at
once say that we concern ourselves exclusively with those
notions of higher-dimensional category which are
essentially-algebraic in the sense described by
Freyd~\cite{Freyd1972Aspects}; for which composition and its
associated coherence are realised by specified operations
subject to equational laws. Of course any species of
essentially-algebraic structure has a concomitant notion of
homomorphism, given by functions on the underlying data
commuting to the specified operations: but it is a commonplace
that for higher-dimensional categories, such homomorphisms are
too strict to be of practical use (though they retain
significant \emph{theoretical} importance), because they must
preserve the categorical structure ``on-the-nose'' rather than
up to suitably coherent higher cells. It is this latter, looser
notion of homomorphism that we shall concern ourselves with
here.

In low dimensions, the homomorphisms we seek already have
satisfactory descriptions: in the case of bicategories, they
are B\'enabou's homomorphisms~\cite[\S
4]{B'enabou1967Introduction}, whilst for tricategories we have
the \emph{trihomomorphisms} of~\cite[\S
3]{Gordon1995Coherence}. This gives us little direct insight
into how the general case should look; yet there is a
particular aspect of the low-dimensional examples which can
usefully be incorporated into a general theory, namely the idea
that, as important as the homomorphisms are, of greater
importance still is their relationship with the \emph{strict}
homomorphisms---the maps we described earlier as preserving the
categorical structure ``on-the-nose''. In the case of
bicategories, this relationship is described by the
two-dimensional monad theory
of~\cite{Blackwell1989Two-dimensional}. We write $\cat{CatGph}$
for the $2$-category of $\cat{Cat}$-enriched graphs---whose
objects are given by a set $X$ together with a functor $X
\times X \to \cat{Cat}$---and $T$ for the $2$-monad thereupon
whose algebras are small bicategories. There now arise both the
category $T\text-\cat{Alg}_{\mathrm s}$ of $T$-algebras and
strict $T$-algebra morphisms---which is equally well the
category $\cat{Bicat}_{\mathrm s}$ of bicategories and strict
homomorphisms---and also the category $T\text-\cat{Alg}$ of
$T$-algebras and $T$-algebra pseudomorphisms---which is equally
well (after some work) the category $\cat{Bicat}$ of
bicategories and homomorphisms (of course, each of these
categories has additional $2$-dimensional structure; but we
will not concern ourselves with that here). Theorem~3.13
of~\cite{Blackwell1989Two-dimensional} now describes the
fundamental relationship between these two categories in terms
of an adjunction:
\begin{equation}
\label{fundamentaladjunction}
\cd[@C+1em]{
    \cat{Bicat}_{\mathrm s} \ar@<5pt>@{<-}[r]^-{(\thg)'} \ar@{}[r]|-{\bot} &
    \cat{Bicat} \ar@<5pt>@{<-}[l]^-{J}
}
\end{equation}
where $J$ is the identity-on-objects inclusion functor. The force of this is
that homomorphisms $\A \to \B$ are classified by strict homomorphisms $\A' \to
\B$, so that the seemingly inflexible strict homomorphisms are in fact the more
general notion. The adjunction in~\eqref{fundamentaladjunction} is of
fundamental importance to the theory developed
in~\cite{Blackwell1989Two-dimensional}, and a suitable generalisation of it
seems a natural desideratum for a theory of higher-dimensional homomorphisms.

Let us examine the ramifications of incorporating such a
generalisation into our theory. Suppose we are presented with
some notion of higher-dimensional category: in accordance with
our assumptions, it admits an essentially-algebraic
presentation, and as such we have a notion of strict
homomorphism, giving us the morphisms of a category
$\cat{HCat}_\mathrm s$. We wish to find the remaining elements
of~\eqref{fundamentaladjunction}: thus a category $\cat{HCat}$
whose maps are the homomorphisms and an adjunction $(\thg)'
\dashv J \colon \cat{HCat}_\mathrm s \to \cat{HCat}$ in which
$J$ is the identity on objects. Now to give these data is
equally well to give a comonad on $\cat{HCat}_s$, since on the
one hand, any adjunction $(\thg)' \dashv J$ of the required
form determines a comonad $(\thg)' \circ J$ on $\cat{HCat}_s$;
and on the other, any comonad $\mathsf Q$ on $\cat{HCat}_s$
determines an adjunction of the required form upon taking
$\cat{HCat}$ to be the \emph{co-Kleisli category} of $\mathsf
Q$ (whose definition we recall in Section~\ref{basicdef}
below).
%Since every such functor is determined up to unique isomorphism
%as the co-Kleisli category of the comonad it induces upon its
%the remaining data in~\eqref{fundamentaladjunction} amounts to
%the same thing as giving a comonad $\mathsf Q$ on
%$\cat{HCat}_\mathrm s$;
Consequently, we can restate the problem of defining a notion
of homomorphism in terms of that of defining a suitable comonad
on the category of strict homomorphisms.

One technique for constructing such a comonad is suggested
in~\cite{Hess2006Co-rings}. For this we must suppose the
category $\cat{HCat}_\mathrm s$ to be presentable as the
category of algebras for a symmetric operad $\O$ on a suitable
base category $\V$; and may then consider \emph{co-rings} over
the operad $\O$---these being $\O$-$\O$-bimodules equipped with
comonoid structure in the monoidal category of
$\O$-$\O$-bimodules. Each such co-ring $\M$ induces a comonad
$\M \otimes_\O \thg$ on $\cat{HCat}_\mathrm s$, and hence a
notion of homomorphism. The problem with this approach lies in
the initial supposition of operadicity; which though it may be
appropriate for homological algebra is rather infrequently
satisfied in the case of higher categories. We may try and
rectify this by moving from symmetric operads to the higher
operads of Batanin~\cite{Batanin1998Monoidal}; but here a
different problem arises, namely that the tensor product of
bimodules over a globular operad is ill-defined, for the reason
that, in the category whose monoids are globular operads, the
tensor product does not preserve reflexive coequalisers in both
variables. Thus, though one can speak of bimodules---as Batanin
himself does in~\cite[Definition
8.8]{Batanin1998Monoidal}---one cannot speak of co-rings: and
so the homomorphisms we obtain need not admit a composition.

\looseness=-1 In this paper, we adopt a quite different means
of constructing a comonad on the category of strict
homomorphisms, one informed by categorical homotopy theory.
Lack, in~\cite{Lack2004Quillen}, establishes that the comonad
on $\cat{Bicat}_\mathrm s$ generated by the adjunction
in~\eqref{fundamentaladjunction} gives a notion of
\emph{cofibrant replacement} for a certain Quillen model
structure on $\cat{Bicat}_\mathrm s$; whose generating
cofibrations are the inclusions of the basic $n$-dimensional
boundaries into the basic $n$-dimensional cells. For the
general case, we can run this argument backwards: given a
Quillen model structure on $\cat{HCat}_\mathrm s$, we can---by
the machinery of~\cite{Garner2008Understanding}---use it to
generate a ``cofibrant replacement comonad'', and so obtain a
notion of homomorphism. In fact, to generate a cofibrant
replacement comonad we do not need a full model structure on
$\cat{HCat}_\mathrm s$, but only a single weak factorisation
system; and for this it suffices to give a set of generating
cofibrations, which as in the bicategorical case will be given
by the inclusions of $n$-dimensional boundaries into
$n$-dimensional cells.

%There are two notable features of this approach. The first is
%how widely applicable it is. All that is required of the theory
%of higher categories under consideration is that it should be
%essentially-algebraic, and that there should be distinguished a
%set of basic $n$-cells together with their boundaries. The
%second is that the resultant homomorphisms compose in a
%\emph{strictly} associative and unital manner.
%

The plan of the paper is as follows. We begin in
Section~\ref{basicdef} by giving a detailed explanation of the
general approach outlined above. We then give two applications.
The first, in Section~\ref{tricatdef}, is to the tricategories
of~\cite{Gordon1995Coherence}. In this case it may seem
redundant to define a notion of homomorphism, since as noted
above there is already one in the literature. However, the
homomorphisms that we define are better-behaved: they form a
category whereas the trihomomorphisms
of~\cite{Gordon1995Coherence} form, at best, a bicategory
(see~\cite{Garner2008low-dimensional} for the details). Now
this may lead us to question whether our homomorphisms are in
fact sufficiently weak. In order to show that they are, we
devote Section~\ref{s4} to a demonstration that the two
different notions of homomorphism, though not strictly the
same, are at least equivalent in a bicategorical sense. With
this as justification, we then give in Section~\ref{omcat} the
main application of our theory, to the definition of
homomorphisms between the weak $\omega$-categories of Michael
Batanin~\cite{Batanin1998Monoidal}.

\textbf{Acknowledgements}. The author thanks the organisers of PSSL~85, Nice,
and of Category Theory 2008, Calais, at which material from this paper was
presented, and an anonymous referee for a number of useful suggestions for improvement. He also acknowledges the support of a Research Fellowship of St John's College, Cambridge and of a Marie Curie Intra-European Fellowship, Project No.\ 040802.

\section{Homotopy-theoretic framework}\label{basicdef}
We saw in the Introduction that in order to obtain a notion of
homomorphism for some essentially-algebraic notion of
higher-dimensional category, it suffices to generate a suitable
comonad $\mathsf Q = (Q, \Delta, \epsilon)$ on the category
$\cat{HCat}_\mathrm s$ of strict homomorphisms: for then we may
then \emph{define} a homomorphism from $A$ to $B$ to be a
strict homomorphism $QA \to B$. Moreover, we may compose two
such homomorphisms $f \colon QA \to B$ and $g \colon QB \to C$
according to the formula
\begin{equation*}
    QA \xrightarrow{\Delta_A} QQA \xrightarrow{Qf} QB \xrightarrow{g} C\ \text,
\end{equation*}
and, from the comonad laws, see that this composition is
associative and has identities given by the counit maps
$\epsilon_A \colon QA \to A$. Thus we obtain a category
$\cat{HCat}$ of homomorphisms: it is the \emph{co-Kleisli
category} of the comonad $\mathsf Q$.

The purpose of this Section is to describe how we may obtain
suitable comonads by taking cofibrant replacements for a weak
factorisation system on the category $\cat{HCat}_\mathrm s$. As
motivation, we first show how any weak factorisation system on
a category gives rise to the data (though not necessarily the
axioms) for a comonad. We recall
from~\cite{Bousfield1977Constructions} that a \emph{weak
factorisation system}, or \emph{w.f.s.},
 $(\L, \R)$ on a category~$\C$
is given by two classes $\L$ and~$\R$ of morphisms in~$\C$
which are each closed under retracts when viewed as full
subcategories of the arrow category $\C^\mathbf 2$, and which
satisfy the two axioms of \emph{factorisation}---that each $f
\in \C$ may be
    written as $f = pi$ where $i \in \L$ and $p \in \R$---%
    and
\emph{lifting}---that for each $i \in \L$ and $p \in
    \R$, we have $i \wo p$,
where to say that $i \wo p$ holds is to say that for each
commutative square
\begin{equation*}
    \cd{
      U \ar[r]^{f} \ar[d]_{i} &
      W \ar[d]^{p} \\
      V \ar[r]_{g}  &
      X
    }
\end{equation*}
we may find a filler $j \colon V \to W$ satisfying $ji = f$ and
$pj = g$. If $(\L, \R)$ is a w.f.s., then its two classes
determine each other via the formulae
\begin{align*}
 \R = \L^{\wo} &\defeq \set{g \in \mathrm{mor}\ \C}{f \wo g \text{ for all $f
\in \L$}}\\
\text{and} \quad \L =
{}^{\wo}\R &\defeq \set{f \in \mathrm{mor}\ \C}{f \wo g \text{ for all $g
\in \R$}}\ \text.
\end{align*}
For those weak factorisation systems that we will be
considering, the following terminology will be appropriate: the
maps in $\L$ we call \emph{cofibrations}, and the maps in $\R$,
\emph{acyclic fibrations}. Supposing $\C$ to have an initial
object $0$, we say that $U \in \C$ is \emph{cofibrant} just
when the unique map $0 \to U$ is a cofibration; and define a
\emph{cofibrant replacement} for $X \in \C$ to be a cofibrant
object $Y$ together with an acyclic fibration $p \colon Y \to
X$. The factorisation axiom implies that every $X \in \C$ has a
cofibrant replacement, obtained by factorising the unique map
$0 \to X$. Suppose now that for every $X$ we have made a choice
of such, which we denote by $\epsilon_X \colon QX \to X$; then
by the lifting axiom, for every $f \colon X \to Y$ in $\C$
there exists a filler for the square on the left, and for every
$X \in \C$ a filler for the square on the right of the
following diagram:
\begin{equation}\label{fillers}
    \cd{
        0 \ar[r]^{!} \ar[d]_{!} & QY \ar[d]^{\epsilon_Y} \\
        QX \ar[r]_{f.\epsilon_X} \ar@{.>}[ur] & X
    } \qquad \text{and} \qquad     \cd{
        0 \ar[r]^{!} \ar[d]_{!} & QQX \ar[d]^{\epsilon_{QX}} \\
        QX \ar[r]_{1_{QX}} \ar@{.>}[ur] & QX\rlap{ .}
    }
\end{equation}
If we now suppose choices of such fillers to have been
made---which we denote by $Qf \colon QX \to QY$ and $\Delta_X
\colon QX \to QQX$ respectively---then we see that we have
obtained all of the data required for a comonad $(Q, \epsilon,
\Delta)$. However, because these data have been chosen
arbitrarily, there is no reason to expect that the
coassociativity and counit axioms should hold, that $\Delta$
should be natural in $X$, or even that the assignation $f
\mapsto Qf$ should be functorial. Whilst in general we cannot
resolve these issues, we may do so for a large class of
w.f.s.'s, including those which in the sequel will interest us.

Recall that a w.f.s.\ is called \emph{cofibrantly generated} by
a set $J \subseteq \L$ if $\R = J^{\wo}$. The principal
technique by which we build cofibrantly generated w.f.s.'s is
the \emph{small object argument} of Quillen \cite[\S
II.3]{Quillen1967Homotopical} and Bousfield
\cite{Bousfield1977Constructions}, which tells us that if $\C$
is a cocomplete category, and $J$ a set of maps in it
satisfying a suitable smallness property, then there is a
w.f.s.\ $(\L, \R)$ on $\C$ given by $\R = J^{\wo}$ and $\L =
{}^{\wo}\R$. These hypotheses are most easily satisfied if $\C$
is a \emph{locally finitely presentable} (\emph{l.f.p.})
category---which is to say that it may be presented as the
category of models for an essentially-algebraic theory, or
equally well, the category of finite-limit preserving functors
$\M \to \cat{Set}$ for some finitely complete small category
$\M$. In this case, $\C$ is certainly cocomplete, and moreover
any set of maps $J$ in it will satisfy the required smallness
property, and so generate a w.f.s.\ on $\C$.

Let us now define a \emph{cofibrant replacement comonad} for a
w.f.s.\ $(\L, \R)$ to be a comonad $\mathsf Q = (Q, \epsilon,
\Delta)$ such that for each $X \in \C$, the map $\epsilon_X
\colon QX \to X$ provides a cofibrant replacement for $X$.

\begin{Prop}
If $\C$ is a l.f.p.\ category, and $J$ a set of maps in it,
then the w.f.s.\ $(\L, \R)$ cofibrantly generated by $J$ may be
equipped with a cofibrant replacement comonad.
\end{Prop}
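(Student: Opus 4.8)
The plan is to appeal to the \emph{algebraic} refinement of the small object argument developed in~\cite{Garner2008Understanding}. The discussion around~\eqref{fillers} shows that the classical small object argument, resting as it does on unstructured choices of fillers, delivers only the \emph{data} of a comonad; its algebraic refinement instead produces a genuine comonad, and it is this that I would invoke. Concretely, since $\C$ is l.f.p.\ it is cocomplete and (as noted above) the requisite smallness hypothesis on $J$ is automatic, so the algebraic small object argument applies and yields a functorial factorisation of each $f \colon X \to Y$ as $X \xrightarrow{\lambda_f} Kf \xrightarrow{\rho_f} Y$, together with the structure of a comonad $\mathbb L = (L, \vec\epsilon, \vec\delta)$ on the arrow category $\C^{\mathbf 2}$ whose underlying functor sends $f$ to $\lambda_f$. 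The first thing to record is that the ordinary w.f.s.\ underlying this algebraic one is exactly the cofibrantly generated $(\L, \R)$: each $\rho_f$ lies in $\R = J^{\wo}$, while each $\lambda_f$ underlies a free $\mathbb L$-coalgebra and so lies in $\L = {}^{\wo}\R$.

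The key point is that $L$ fixes domains, $\mathrm{dom}\,Lf = \mathrm{dom}\,f$ for every $f$. Hence $L$ maps the full subcategory $\C^{\mathbf 2}_0 \subseteq \C^{\mathbf 2}$ of maps with domain $0$ into itself; and since $\C^{\mathbf 2}_0$ is full, every component of $\vec\epsilon$ and $\vec\delta$ at an object of $\C^{\mathbf 2}_0$ is automatically a morphism of $\C^{\mathbf 2}_0$, so that $\mathbb L$ restricts to a comonad on $\C^{\mathbf 2}_0$. (Equivalently: $\mathbb L$ lies strictly over the identity comonad along $\mathrm{dom} \colon \C^{\mathbf 2} \to \C$, and one restricts to the fibre over $0$.) Now the codomain functor restricts to an isomorphism $\C^{\mathbf 2}_0 \cong \C$ sending $(0 \to X)$ to $X$---a morphism of $\C^{\mathbf 2}_0$ being determined by its codomain-component, since its domain-component is forced to be $\id_0$. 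Transporting the restricted comonad across this isomorphism produces a comonad $\mathsf Q = (Q, \epsilon, \Delta)$ on $\C$, with $QX = K(0 \to X)$, with $\epsilon_X = \rho_{(0 \to X)} \colon QX \to X$, and with $\Delta_X$ the codomain-component of $\vec\delta_{(0 \to X)}$.

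It then remains only to check that $\mathsf Q$ is a \emph{cofibrant replacement} comonad, and this is immediate from the two facts recorded in the first paragraph: the map $0 \to QX$ is $\lambda_{(0 \to X)} \in \L$, so $QX$ is cofibrant, while $\epsilon_X = \rho_{(0 \to X)} \in \R$ is an acyclic fibration, so that $\epsilon_X$ exhibits $QX$ as a cofibrant replacement of $X$. Thus the entire substance of the argument is concentrated in the passage from the classical to the algebraic small object argument: once a strictly coassociative, counital comonad on $\C^{\mathbf 2}$ is in hand, the restriction to maps out of the initial object is purely formal. The genuine obstacle---were the machinery of~\cite{Garner2008Understanding} not available---would be precisely the coherence upgrading the arbitrary fillers of~\eqref{fillers} to an \emph{honest} comonad, and it is exactly this that the algebraic small object argument supplies by building the factorisation as a free coalgebra system rather than by unstructured choice.
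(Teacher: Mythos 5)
Your proof is correct, but it takes a genuinely different route from the one the paper uses for this particular Proposition. The paper's own proof stays with the \emph{classical} small object argument: it observes that this already yields a functorial factorisation (a section of the composition functor $\C^{\mathbf 3} \to \C^{\mathbf 2}$), whence a functor $Q$ and a natural counit $\epsilon$ by fixing the domain at $0$, and then cites Radulescu-Banu for the construction of a comultiplication $\Delta$ satisfying the comonad laws. You instead invoke the algebraic small object argument of~\cite{Garner2008Understanding} to get an honest comonad $\mathsf L$ on $\C^{\mathbf 2}$ up front, and then restrict it along $\mathrm{dom}$ to the coslice $0/\C \cong \C$ --- which is, almost verbatim, the paper's proof of Corollary~\ref{universalcofib} rather than of this Proposition. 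Your argument is sound (the restriction step is correctly justified: $L$ fixes domains, the coslice is a full subcategory, and the counit and comultiplication components have identity domain-parts), and it actually proves more, delivering the \emph{universal} cofibrant replacement comonad rather than merely some cofibrant replacement comonad depending on an arbitrary choice of regular cardinal. What the paper's weaker proof buys is expository: it lets the author exhibit the non-canonicity of the classical construction (the dependence on $J$ and on the cardinal $\kappa$) as motivation before introducing the universal algebraic realisation, a distinction your proof collapses. The one point worth making explicit in your write-up is that the identification $\L = {}^{\wo}\R$ with the retract-closure of the $\mathsf L$-coalgebras, and $\R = J^{\wo}$ with the retract-closure of the $\mathsf R$-algebras, is itself a theorem of~\cite{Garner2008Understanding} about the universal realisation, not a formal triviality; you use it when asserting that $0 \to QX$ is a cofibration and $\epsilon_X$ an acyclic fibration.
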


\begin{proof}
By examination of the construction used in the small object
argument, we see that it provides a choice of $(\L,
\R)$-factorisation
\begin{equation}\label{choosefacs}
    X \xrightarrow f Y \qquad \mapsto \qquad X \xrightarrow{\lambda_f} Pf \xrightarrow{\rho_f} Y \qquad \text{(for all $f \in \C$)}
\end{equation}
that is \emph{functorial}, in the sense that it provides the
assignation on objects of a functor $\C^{\mathbf 2} \to
\C^{\mathbf 3}$ which is a section of the ``composition''
functor $\C^\mathbf 3 \to \C^\mathbf 2$. In particular, by
fixing $X$ to be $0$, we obtain a choice of cofibrant
replacements $\epsilon_Y \colon QY \to Y$ and of fillers $Qf
\colon QY \to QZ$ such that $f \mapsto Qf$ is a functorial
assignment and $\epsilon$ a natural transformation. It remains
only to construct natural maps $\Delta_Y \colon QY \to QQY$ for
which the comonad laws are satisfied, and this is done by
Radulescu-Banu in \cite[\S 1.1]{Radulescu-Banu1999Cofibrance};
we omit the details.
\end{proof}

In principle, we could end this section here, since we have now
shown how to associate a cofibrant replacement comonad to any
(well-behaved) category equipped with a (well-behaved) w.f.s.
However, there is something unsatisfactory about the previous
Proposition. An examination of its proof shows that a
cofibrantly generated w.f.s.\ $(\L, \R)$ may well admit many
different cofibrant replacement comonads, since the given
construction relies on arbitrary choices of data which, in
general, will induce non-isomorphic choices of $Q$. Firstly, we
must choose a generating set $J$ for $(\L, \R)$; and secondly,
we must choose a (sufficiently large) regular cardinal $\kappa$
that governs the length of the transfinite induction used in
the application of the small object argument. The first of
these choices should not worry us unduly, since in practice, it
is the set $J$ that one starts from, rather than the w.f.s.\ it
generates. However, the second is a more substantial concern,
since the piece of data on which it is predicated is one that
ought to remain entirely internal to the workings of the small
object argument. This raises the question as to whether there
is a canonical---or better yet, \emph{universal}---choice of
cofibrant replacement comonad associated to a w.f.s.\ $(\L,
\R)$. We will now show that there is, at least once we have
fixed a generating set $J$. To do so we will need to recall
some definitions from~\cite{Garner2008Understanding}.

\begin{Defn}
Let $(\L, \R)$ be a w.f.s.\ on a category $\C$. An
\emph{algebraic realisation} of $(\L, \R)$ is given by the
following pieces of data:
\begin{itemize}
\item For each $f \colon X \to Y$ in $\C$, a choice of
    $(\L, \R)$ factorisation as in~\eqref{choosefacs};
\item For each commutative square as on the left of the
    following diagram, a choice of filler as on the right:
\begin{equation*}
    \cd{
      U \ar[r]^{h} \ar[d]_{f} &
      W \ar[d]^{g} \\
      V \ar[r]_{k} &
      X
    } \qquad \dashrightarrow \qquad
    \cd{
      U \ar[r]^{\lambda_g. h} \ar[d]_{\lambda_f} &
      Pg \ar[d]^{\rho_g} \\
      Pf \ar[r]_{k . \rho_f} \ar@{.>}[ur]|{P(h, k)} &
      X\rlap{ ;}
    }
\end{equation*}
\item For each $f \colon X \to Y$ in $\C$, choices of
    fillers for the following squares:
\begin{equation*}
    \cd{
      X \ar[r]^{\lambda_{\lambda_f}} \ar[d]_{\lambda_f} &
      P\lambda_f \ar[d]^{\rho_{\lambda_f}} \\
      Pf \ar[r]_{1_{Pf}} \ar@{.>}[ur]|{\sigma_f} &
      Pf
    } \qquad \text{and} \qquad
    \cd{
      Pf \ar[r]^{1_{Pf}} \ar[d]_{\lambda_{\rho_f}} &
      Pf \ar[d]^{\rho_f} \\
      P\rho_f \ar[r]_{\rho_{\rho_f}} \ar@{.>}[ur]|{\pi_f} &
      Y
    }
\end{equation*}
\end{itemize}
subject to the following axioms:
\begin{itemize}
\item The assignation $f \mapsto \lambda_f$ is the functor
    part of a comonad $\mathsf L$ on $\C^\mathbf 2$ whose
    counit map at $f$ is $(1, \rho_f) \colon \lambda_f \to
    f$ and whose comultiplication is $(1, \sigma_f) \colon
    \lambda_f \to \lambda_{\lambda_f}$;
\item The assignation $f \mapsto \rho_f$ is the functor
    part of a monad $\mathsf R$ on $\C^\mathbf 2$ whose
    unit map at $f$ is $(\lambda_f,1) \colon f \to \rho_f$
    and whose multiplication is $(\pi_f, 1) \colon
    \lambda_f \to \lambda_{\lambda_f}$;
\item The natural transformation $LR \Rightarrow RL \colon
    \C^\mathbf 2 \to \C^\mathbf 2$ whose component at $f$
    is $(\sigma_f, \pi_f) \colon \lambda_{\rho_f} \to
    \rho_{\lambda_f}$ describes a distributive law in the
sense of~\cite{Beck1969Distributive} between $\mathsf L$
and $\mathsf R$.
\end{itemize}
\end{Defn}
The data for an algebraic realisation is sufficient to
reconstruct the underlying w.f.s.\ $(\L, \R)$: indeed, the
classes $\L$ and $\R$ are the closure under retracts of the
classes of maps admitting an $\mathsf L$-coalgebra structure,
respectively an $\mathsf R$-algebra, structure. Hence the pairs
$(\mathsf L, \mathsf R)$ arising from algebraic realisations
are objects worthy of study on their own: they are the
\emph{natural weak factorisation systems} of
\cite{Grandis2006Natural}. Note that the \emph{data} for an
algebraic realisation will exist for any weak factorisation
system; the issue is whether or not we may choose it such that
the \emph{axioms} are satisfied. The main result
of~\cite{Garner2008Understanding} is to show that for a
cofibrantly generated w.f.s., we can, and moreover, that there
is a best possible way of doing so.
\begin{Prop}\label{universalrealisation}
Let $\C$ be a l.f.p.\ category, and let $J$ be a set of maps in
it. Then the w.f.s.\ $(\L, \R)$ cofibrantly generated by $J$
has a universally determined algebraic realisation.
\end{Prop}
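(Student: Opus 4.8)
The plan is to invoke the main theorem of~\cite{Garner2008Understanding}, whose statement is precisely that for a cofibrantly generated weak factorisation system one can construct a \emph{universal} algebraic realisation by means of an enriched version of the small object argument. Thus the proposition is essentially a citation, but let me sketch the structure of the argument so as to make clear what ``universally determined'' should mean and why the hypotheses of l.f.p.-ness and the fixed generating set $J$ are the right ones.

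\emph{First}, I would recall the correct framework for comparing algebraic realisations. The pairs $(\mathsf L, \mathsf R)$ arising from algebraic realisations are the natural weak factorisation systems of~\cite{Grandis2006Natural}; these organise themselves into a category (or rather a $2$-category), in which a morphism is a natural transformation of the underlying functorial factorisations commuting with the comonad and monad structures. The word ``universal'' should then be read as: among all algebraic realisations whose underlying w.f.s.\ is $(\L, \R)$ and which are generated---in a suitable sense---by $J$, there is a terminal (or initial) one. Concretely, I expect the universal realisation to be characterised by a freeness property: its monad $\mathsf R$ is the \emph{free} monad on $\C^{\mathbf 2}$ generated by the lifting data against the maps in $J$.

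\emph{Second}, I would produce this free monad via the algebraic small object argument. The key technical point---and the reason l.f.p.-ness is assumed---is that $\C$ cocomplete and locally presentable guarantees both that the relevant colimits exist and that the pointed endofunctor on $\C^{\mathbf 2}$ encoding ``factor, then lift against $J$'' is accessible, so that its free monad is obtained by a convergent transfinite construction. Unlike Quillen's classical argument, this construction is run ``freely'': rather than formally adjoining new lifts at each stage one takes a colimit that identifies redundantly-added cells, and it is precisely this feature that makes the output a genuine monad (satisfying the associativity and unit laws) rather than merely a pointed endofunctor. The comonad $\mathsf L$, the distributive law, and the comonad/monad axioms of the Definition then fall out automatically from the construction; in particular the underlying w.f.s.\ is recovered as the retract-closures of the coalgebras and algebras, matching $(\L, \R) = ({}^{\wo}(J^{\wo}), J^{\wo})$.

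\emph{The main obstacle}---and the genuine content imported from~\cite{Garner2008Understanding}---is verifying that this free-monad construction converges and that the resulting functorial factorisation really does underlie a natural weak factorisation system satisfying the distributive-law axiom, rather than just the comonad and monad laws separately. Establishing the universal property (that any other algebraic realisation generated by $J$ receives a unique comparison map) is then a formal consequence of the freeness, but checking that the comparison respects the factorisations and the lifting operations is the delicate bookkeeping step. Since all of this is carried out in detail in the cited paper, here I would simply assemble the pieces, state that the universal realisation is the one whose $\mathsf R$ is free on the lifting data for $J$, and refer to~\cite{Garner2008Understanding} for the convergence and the verification of the axioms.
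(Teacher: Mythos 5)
Your proposal matches the paper's own proof in both substance and strategy: the paper likewise defers to Theorem~4.4 of~\cite{Garner2008Understanding}, sketches the one-step factorisation (coproduct of squares in $J$ followed by a pushout) as a pointed endofunctor $(R',\Lambda')$ on $\C^{\mathbf 2}$, takes the free monad on it via the techniques of~\cite{Kelly1980unified}, and produces $\mathsf L$ simultaneously, with the universality being exactly the freeness on distinguished $\mathsf L$-coalgebra structures for the maps of $J$ that you describe. The only quibble is terminological --- the relevant modification of the small object argument is ``algebraic'' rather than ``enriched'' --- but this does not affect the correctness of the argument.
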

The sense of this universality is discussed in detail
in~\cite[\S 3]{Garner2008Understanding}; in brief, it says that
the universal algebraic realisation $(\mathsf L, \mathsf R)$ is
freely generated by the requirement that each map $j \in J$
should come equipped with a distinguished structure of $\mathsf
L$-coalgebra. In other words, given any other natural w.f.s.\
$(\mathsf L', \mathsf R')$ on $\C$ and a distinguished $\mathsf
L'$-coalgebra structure on each $j \in J$, we can find a unique
morphism of natural w.f.s.'s (see \cite[\S
3.3]{Garner2008Understanding} for the definition) $(\mathsf L,
\mathsf R) \to (\mathsf L', \mathsf R')$ preserving the
distinguished coalgebras. Note in particular that this
universality is determined \emph{by the set $J$}, and not
merely by the w.f.s.\ it generates; but as we have remarked
before, this should not worry us unduly, since in practice it
is the set $J$, rather than the w.f.s., from which one starts.

\begin{proof}[Proof of Proposition~\ref{universalrealisation}]
For a full proof see \cite[Theorem
4.4]{Garner2008Understanding}: we recall only the salient
details here. To construct the factorisation of a map $f \colon
X \to Y$ of $\C$, we begin exactly as in the small object
argument. We form the set $S$ whose elements are squares
\begin{equation*}
\cd{
    A \ar[r]^{h} \ar[d]_j & X \ar[d]^f \\
    B \ar[r]_k & Y
}
\end{equation*}
such that $j \in J$. We then form the coproduct
\begin{equation*}
    \cd[@C+2em]{
    \sum_{x \in S} A_x \ar[d]_{\sum_{x \in S} j_x} \ar[r]^-{\spn{h_x}_{x \in S}} &
    X \ar[d]^f \\
    \sum_{x \in S} B_x \ar[r]_-{\spn{h_x}_{x \in S}} &
    Y}
\end{equation*}
and define an object $P'g$ and morphisms $\lambda'_g$ and
$\rho'_g$ by factorising this square as
\begin{equation*}
    \cd[@C+2em]{
    \sum_{x \in S} A_x \ar[d]_{\sum_{x \in S} j_x} \ar[r]^-{\spn{h_x}_{x \in S}} &
    X \ar[d]^{\lambda'_f} \ar[r]^{\id_X} & X \ar[d]^f \\
    \sum_{x \in S} B_x \ar[r] &
    P'f \ar[r]_{\rho'_f} & Y}
\end{equation*}
where the left-hand square is a pushout. The assignation $f
\mapsto \rho'_f$ may now be extended to a functor $R' \colon
\C^\mathbf 2 \to \C^\mathbf 2$; whereupon the map $(\lambda'_f,
\id_Y) \colon f \to R'f$ provides the component at $f$ of a
natural transformation $\Lambda' \colon \id_{\C^\mathbf 2}
\Rightarrow R'$. We now obtain the monad part $\mathsf R$ of
the desired algebraic realisation as the free monad on the
pointed endofunctor $(R', \Lambda')$. We may construct this
using the techniques of~\cite{Kelly1980unified}. To obtain the
comonad part $\mathsf L$ we proceed as follows. The assignation
$f \mapsto \lambda'_f$ underlies a functor \mbox{$L' \colon
\C^\mathbf 2 \to \C^\mathbf 2$}; and a little manipulation
shows that this functor in turn underlies a comonad
$\mathsf{L}'$ on $\C^\mathbf 2$. We may now adapt the free
monad construction so that at the same time as it produces
$\mathsf R$ from $(R', \Lambda')$, it also produces $\mathsf L$
from~$\mathsf L'$.
\end{proof}

\begin{Cor}\label{universalcofib}
Let $\C$ be a l.f.p.\ category, and let $J$ be a set of maps in
it. Then the w.f.s.\ $(\L, \R)$ generated by $J$ may be
equipped with a universally determined cofibrant replacement
comonad.
\end{Cor}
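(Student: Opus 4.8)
The plan is to specialise the universal algebraic realisation provided by Proposition~\ref{universalrealisation} to the case $X = 0$, extracting from the comonad $\mathsf L$ on $\C^{\mathbf 2}$ a comonad $\mathsf Q$ on $\C$ itself. Recall that an algebraic realisation comprises, among other data, a comonad $\mathsf L$ on the arrow category $\C^{\mathbf 2}$ whose functor part sends $f$ to its left factor $\lambda_f$. The key observation is that the full subcategory of $\C^{\mathbf 2}$ on those maps with domain $0$ is isomorphic to $\C$ itself, via the functor sending $X \in \C$ to the unique map $!_X \colon 0 \to X$; and that the comonad $\mathsf L$ restricts to this subcategory, since the left factor $\lambda_{!_X} \colon 0 \to P(!_X)$ of a map out of $0$ is again a map out of $0$. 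Writing $QX$ for $P(!_X)$, this restriction furnishes a comonad $\mathsf Q = (Q, \epsilon, \Delta)$ on $\C$.

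First I would verify that the restriction is well-defined, i.e.\ that $\mathsf L$ genuinely preserves the subcategory of objects of $\C^{\mathbf 2}$ with domain $0$. This is immediate on objects, since $\lambda_{!_X}$ has domain $0$ by construction. On morphisms one checks that $L$ applied to a commutative square of the form
\begin{equation*}
    \cd{
      0 \ar[r]^{!} \ar[d]_{!} &
      0 \ar[d]^{!} \\
      X \ar[r]_{u} &
      Y
    }
\end{equation*}
again has top edge the identity of $0$; this follows because the functoriality of the factorisation acts on the domain simply by the identity when that domain is $0$. Next I would identify the structure maps: the counit of $\mathsf L$ at $!_X$ is $(1, \rho_{!_X})$, whose second component $\rho_{!_X} \colon QX \to X$ furnishes the counit $\epsilon_X$ of $\mathsf Q$; and the comultiplication of $\mathsf L$ at $!_X$ is $(1, \sigma_{!_X})$, whose second component $\sigma_{!_X} \colon QX \to QQX$ furnishes the comultiplication $\Delta_X$. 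The comonad axioms for $\mathsf Q$ then follow directly from those for $\mathsf L$, restricted to the subcategory in question.

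Finally I would check that $\mathsf Q$ is a \emph{cofibrant replacement} comonad, that is, that each $\epsilon_X \colon QX \to X$ exhibits $QX$ as a cofibrant replacement of $X$. For this, observe that $QX$ is cofibrant because the map $\lambda_{!_X} \colon 0 \to QX$ carries a canonical $\mathsf L$-coalgebra structure (it is the left factor produced by the realisation), hence lies in $\L$; and that $\epsilon_X = \rho_{!_X} \colon QX \to X$ lies in $\R$ because it carries a canonical $\mathsf R$-algebra structure. Universality is then inherited from that of the realisation: since $(\mathsf L, \mathsf R)$ is the universal algebraic realisation determined by $J$, the comonad $\mathsf Q$ obtained by this restriction is correspondingly universal among cofibrant replacement comonads compatible with the distinguished $\mathsf L$-coalgebra structures on the maps of $J$.

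I expect the main obstacle to be the bookkeeping needed to confirm that the restriction of $\mathsf L$ to the domain-$0$ subcategory is strictly compatible with all the comonad data---in particular, that the comultiplication component $\sigma_{!_X}$ really does land in $QQX = P(!_{QX})$ and satisfies coassociativity on the nose, rather than merely up to the coherence isomorphisms of $\C^{\mathbf 2}$. Since the statement is a Corollary and the substantive work is already contained in Proposition~\ref{universalrealisation}, the proof should be short; the delicacy lies entirely in correctly transporting the universal property, for which I would appeal to the sense of universality recalled in the discussion following Proposition~\ref{universalrealisation}.
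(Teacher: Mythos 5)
Your proposal is correct and follows exactly the paper's own argument: the paper defines the universal cofibrant replacement comonad as the restriction of the comonad $\mathsf L$ from the universal algebraic realisation to the coslice $0/\C \cong \C$, which is precisely your domain-$0$ subcategory. The only remark worth making is that your worry about coherence isomorphisms is unfounded---$\C^{\mathbf 2}$ is an ordinary category and the restriction is strictly a comonad with no further bookkeeping needed.
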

\begin{proof}
Form the universal algebraic realisation $(\mathsf L, \mathsf
R)$ of $J$; now define the universal cofibrant replacement
comonad to be the restriction of the comonad $\mathsf L$ to the
coslice category $0 / \C \cong \C$.
\end{proof}

The preceding proofs provide us with a very general machinery
for building the universal cofibrant replacement comonad of a
w.f.s. In practice, however, it is often easier to describe
directly what we think this comonad should be; and so we now
give a recognition principle that will allow us to prove such a
description to be correct.

\begin{Defn}
Let $J$ be a fixed set of maps in a category $\C$. Now for any
$f \colon Y \to X$ in $\C$, a \emph{choice of liftings} for $f$
(with respect to $J$) is a function $\phi$ which to every $j
\in J$ and commutative square
\begin{equation}\label{commsquare}
    \cd{
        A \ar[r]^h \ar[d]_j & Y \ar[d]^f \\
        B \ar[r]_k \ar@{.>}[ur] & X
    }
\end{equation}
in $\C$ assigns a diagonal filler $\phi(j, h, k) \colon B \to
Y$ making both triangles commutate as indicated. We call the
pair $(f, \phi)$ an \emph{algebraic acyclic fibration}. Given
an object $X \in \C$, we define the category $\cat{AAF}/X$ to
have as objects, algebraic acyclic fibrations into $X$, and as
morphisms $(f, \phi) \to (g, \psi)$, commutative triangles
\begin{equation*}
    \cd{
        Y \ar[dr]_{f} \ar[rr]^u & & Z \ar[dl]^{g} \\ & X
    }
\end{equation*}
such that for any square of the form~\eqref{commsquare} we have
$u.\phi(j,h,k) = \psi(j, uh, k)$.
\end{Defn}
Our recognition principle is now the following:
\begin{Prop}\label{recognition}
Let $J$ be a set of maps in a l.f.p.\ category $\C$. Then for
each $X \in \C$, the universal cofibrant replacement $\epsilon_X
\colon QX \to X$ with respect to $J$ may be equipped with a
choice of liftings $\phi_X$ such that $(\epsilon_X, \phi_X)$
becomes an initial object of $\cat{AAF} / X$.
\end{Prop}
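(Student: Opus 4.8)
The plan is to identify $\cat{AAF}/X$ with a category of $\mathsf R$-algebras, and then to recognise $(\epsilon_X, \phi_X)$ as the initial such algebra --- namely the free $\mathsf R$-algebra on the initial object of the slice $\C/X$.

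First I would unwind the pushout construction of $P'f$ recalled in the proof of Proposition~\ref{universalrealisation}, now applied to a map $f \colon Y \to X$ with codomain $X$. Here $P'f$ is the pushout of $\sum_x j_x$ along $\spn{h_x}_{x \in S}$, where $S$ ranges over all squares from some $j \in J$ into $f$, so that $\lambda'_f \colon Y \to P'f$. By the universal property of this pushout, a map $\beta \colon P'f \to Y$ with $\beta . \lambda'_f = 1_Y$ and $f . \beta = \rho'_f$ amounts to a family of diagonal fillers $\psi_x \colon B_x \to Y$, one for each square $x = (j, h, k)$, satisfying $\psi_x . j = h$ and $f . \psi_x = k$. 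Thus an algebra structure over $X$ for the pointed endofunctor $(R', \Lambda')$ on the object $f$ is precisely a choice of liftings $\phi$ for $f$; and a short calculation shows that the two notions of morphism coincide, so that $\cat{AAF}/X$ is isomorphic to the category of $(R', \Lambda')$-algebras whose underlying object lies over $X$.

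Next I would appeal to the defining property of the free monad. Since $\mathsf R$ is constructed as the free monad on the pointed endofunctor $(R', \Lambda')$, and since over an l.f.p.\ base this monad is \emph{algebraically} free, its category of algebras is isomorphic over $\C^\mathbf 2$ to that of $(R', \Lambda')$-algebras. As $\mathsf R$ fixes codomains, it restricts to a monad on each slice $\C/X$, and the isomorphism of the previous paragraph refines to one between $\cat{AAF}/X$ and the Eilenberg--Moore category of this restricted monad, commuting with the forgetful functors down to $\C/X$.

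It then remains to locate the initial object. The forgetful functor from $\mathsf R$-algebras over $X$ to $\C/X$ has a left adjoint sending each object to its free algebra, and this left adjoint preserves the initial object; hence the initial $\mathsf R$-algebra over $X$ is the free one on the initial object $(0 \to X)$ of $\C/X$. Its underlying map is $R(0 \to X) = \rho_{0 \to X}$, which by Corollary~\ref{universalcofib} is exactly $\epsilon_X \colon QX \to X$; transporting the canonical free-algebra structure across the isomorphism of the preceding paragraphs yields the required choice of liftings $\phi_X$, and initiality is carried along with it. I expect the main obstacle to be the bookkeeping of the first step --- verifying, with everything kept strictly over $X$, that morphisms of $\cat{AAF}/X$ match morphisms of pointed-endofunctor-algebras --- together with confirming that the monad delivered by the construction of Proposition~\ref{universalrealisation} really is algebraically free, which is what licenses the passage from pointed-endofunctor-algebras to genuine $\mathsf R$-algebras and hence the free-on-initial argument.
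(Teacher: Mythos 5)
Your proposal is correct and follows essentially the same route as the paper: identify $\cat{AAF}/X$ with the category of algebras for the restriction $\mathsf R_X$ of $\mathsf R$ to $\C/X$, and exhibit the initial object as the free algebra on $0 \to X$, whose underlying map is $R(0 \to X) = \epsilon_X$. The only difference is one of exposition: where the paper cites Proposition~5.4 of the small-object-argument paper for the isomorphism $\cat{AAF}/X \cong \mathsf R_X\text{-}\mathrm{Alg}$, you reconstruct it directly via algebras for the pointed endofunctor $(R', \Lambda')$ and the algebraic freeness of $\mathsf R$ (which Kelly's construction does deliver in the l.f.p.\ setting, so the obstacle you flag is real but surmountable).
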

\begin{proof}
Let $(\mathsf L, \mathsf R)$ be the universal algebraic
realisation of $J$. It follows from~\cite[Proposition
5.4]{Garner2008Understanding} that $\cat{AAF}/X$ is isomorphic
to the category of algebras for the monad $\mathsf R_X$
obtained by restricting and corestricting the monad $\mathsf R
\colon \C^\mathbf 2 \to \C^\mathbf 2 $ to the slice category
$\C / X$. As such, it has an initial object obtained by
applying the free functor $\C / X \to \cat{AAF} / X$ to the
initial object $0 \to X$ of $\C / X$. Moreover, the underlying
map of this initial object is obtained by applying $R$ to $0
\to X$, and hence is the universal cofibrant replacement
$\epsilon_X \colon QX \to X$.
\end{proof}

\begin{Ex}\label{ex1}
Let $S$ be a commutative ring, and consider the category
$\cat{Ch}(S)$ of positively graded chain complexes of
$S$-modules, equipped with the set of generating cofibrations
$J := \set{2_i \hookrightarrow \partial_i}{i \in \mathbb N}$.
Here $2_i$ is the representable chain complex at $i$, with
components given by
\begin{equation*}
    (2_i)_n = \begin{cases} S & \text{if $n = i$ or $n = i-1$;} \\ 0 &
    \text{otherwise,}\end{cases}
\end{equation*}
and differential being the identity map at stage $i$ and the
zero map elsewhere. The chain complex $\partial_i$ is its
boundary, whose components are
\begin{equation*}
    (\partial_i)_n = \begin{cases} S & \text{if $n = i-1$;} \\ 0 &
    \text{otherwise,}\end{cases}
\end{equation*}
and whose differential is everywhere zero. $\cat{Ch}(S)$ is a
l.f.p.\ category, and so by Corollary~\ref{universalcofib} we
may take universal cofibrant replacements with respect to $J$.
We now give an explicit description of these cofibrant
replacements. Given a chain complex $X$, the chain complex $QX$
will be free in every dimension; and so it suffices to specify
a set of free generators for each $(QX)_i$ and to specify where
each generator should be sent by the differential $d_i \colon
(QX)_i \to (QX)_{i-1}$ and the counit $\epsilon_i \colon (QX)_i
\to X_i$. We do this by induction over $i$:
\begin{itemize}
\item For the base step, $(QX)_0$ is generated by the set
    $\set{[x]}{x \in X_0}$, and $\epsilon_0 \colon (QX)_0
    \to X_0$ is specified by $\epsilon_0([x]) = x$;
\item For the inductive step, $(QX)_{i+1}$ (for $i
    \geqslant 0$) is generated by the set
\begin{equation*}
    \set{[x, z]}{x \in X_{i+1},\ z \in Z(QX)_i,\ \epsilon_i(z) =
d_{i+1}(x)}\text,
\end{equation*}
whilst $\epsilon_{i+1} \colon (QX)_{i+1} \to X_{i+1}$ and
$d'_{i+1} \colon (QX)_{i+1} \to (QX)_i$ are specified by $
\epsilon_{i+1}([x, z]) = x$ and $d_{i+1}([x, z]) = z$,
\end{itemize}
where given a chain complex $A$, we are writing $ZA_i$ for the
kernel of the map $d_i \colon A_i \to A_{i-1}$. To prove that
$\epsilon_X$ is the universal cofibrant replacement for $X$, it
suffices, by Proposition~\ref{recognition}, to equip it with a
choice of liftings such that it becomes an initial object of
$\cat{AAF} / X$. By inspection, to equip a chain map $f \colon
Y \to X$ with a choice of liftings is to give:
\begin{itemize}
\item A set function $k_0 \colon X_0 \to Y_0$ which is a
    section of $f_0 \colon Y_0 \to X_0$;
\item For every $i \geqslant 0$, a set function $k_{i+1}
    \colon X_{i+1} \times ZY_i \to Y_{i+1}$ which is a
    section of $(f_{i+1}, d_{i+1}) \colon Y_{i+1} \to
    X_{i+1} \times ZY_i$.
\end{itemize}
The map $\epsilon_X \colon QX \to X$ has an obvious choice of
liftings given by the inclusion of generators. We claim that
this makes it initial in $\cat{AAF}/X$. Indeed, given $f \colon
Y \to X$ equipped with a choice of liftings $\{k_i\}$, there is
a chain map $h \colon QX \to Y$ given by the following
recursion:
\begin{itemize}
\item For the base step, $h_0$ is specified by $h_0([x]) =
    k_0(x)$;
\item For the inductive step, $h_{i+1}$ is specified by
    $h_{i+1}([x,z]) = k_{i+1}(x,h_i(z))$.
\end{itemize}
It's easy to see that this $h$ commutes with the projections to
$X$, and with the given choices of liftings; and moreover, that
it is the unique chain map $QX \to Y$ with these properties.
Hence, by Proposition~\ref{recognition}, $\epsilon_X \colon QX
\to X$ is the universal cofibrant replacement of $X$.
\end{Ex}

Now, although Proposition~\ref{recognition} allows us to
recognise the functor and the counit part of the universal
cofibrant replacement comonad, it says nothing about its
comultiplication. In fact, we may recover this using the
initiality exhibited in Proposition~\ref{recognition}. We first
observe that if $f \colon C \to D$ and $g
 \colon D \to E$ are equipped with choices of liftings $\phi$ and $\psi$,
then their composite $gf \colon C \to E$ may also be so
equipped, via the assignation $(\phi \bullet \psi)(j, h, k)
\defeq \phi(j, h, \psi(j, fh, k))$.

\begin{Prop}\label{inducecomult}
Let $J$ be a set of maps in a l.f.p.\ category $\C$. Then for
each $X \in \C$, the unique map
\begin{equation}\label{uniquemap}
    \cd{
        QX \ar@{.>}[rr] \ar[dr]_{(\epsilon_X, \phi_X)} & & QQX \ar[dl]^{(\epsilon_X . \epsilon_{QX}, \phi_X \bullet \phi_{QX})} \\ & X\rlap{ .}
    }
\end{equation}
of $\cat{AAF} / X$ is the comultiplication $\Delta_X \colon QX
\to QQX$ of the universal cofibrant replacement comonad
generated by $J$.
\end{Prop}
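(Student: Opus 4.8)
The plan is to exploit the initiality established in Proposition~\ref{recognition}. Since $(\epsilon_X, \phi_X)$ is an initial object of $\cat{AAF}/X$, there is a \emph{unique} morphism from it to the algebraic acyclic fibration $(\epsilon_X . \epsilon_{QX}, \phi_X \bullet \phi_{QX})$; so it will suffice to show that the comultiplication $\Delta_X \colon QX \to QQX$ of the comonad $\mathsf Q$ itself underlies such a morphism, for then uniqueness will force $\Delta_X$ to coincide with the dotted map of~\eqref{uniquemap}. Spelling out the definition of $\cat{AAF}/X$, this amounts to two conditions: that $\Delta_X$ be a map over $X$, i.e.\ $(\epsilon_X . \epsilon_{QX}) . \Delta_X = \epsilon_X$; and that $\Delta_X$ respect the chosen liftings, i.e.\ $\Delta_X . \phi_X(j, h, k) = (\phi_X \bullet \phi_{QX})(j, \Delta_X . h, k)$ for every $j \in J$ and every square of the form~\eqref{commsquare} over $\epsilon_X$.

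The first condition is a formal consequence of the comonad axioms. By Corollary~\ref{universalcofib}, $\mathsf Q$ is the restriction of the comonad $\mathsf L$ to $0 / \C \cong \C$, so that $\Delta_X = \sigma_{0 \to X}$ and the counit law $\epsilon_{QX} . \Delta_X = 1_{QX}$ holds --- indeed the latter is nothing but the filler identity $\rho_{\lambda_f} . \sigma_f = 1$ that defines $\sigma$. Post-composing with $\epsilon_X$ then gives $(\epsilon_X . \epsilon_{QX}) . \Delta_X = \epsilon_X$, as required.

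The real work, and the step I expect to be the main obstacle, is the lifting-compatibility condition, which forces us to reconcile the description of $\Delta_X$ coming from the comonad $\mathsf L$ with the description of the liftings coming from the monad $\mathsf R$. I would route this through the monadic picture recalled in the proof of Proposition~\ref{recognition}: by \cite[Proposition~5.4]{Garner2008Understanding}, $\cat{AAF}/X$ is the category of $\mathsf R_X$-algebras, a choice of liftings $\phi$ being exactly an $\mathsf R_X$-algebra structure, and the composite $\phi_X \bullet \phi_{QX}$ being the canonical composite of $\mathsf R_X$-algebras determined by the multiplication of $\mathsf R$. Under this translation, the compatibility we need is precisely an instance of the distributive law of $\mathsf L$ over $\mathsf R$ --- the one axiom of an algebraic realisation coupling the two sides --- whose component $(\sigma_f, \pi_f) \colon \lambda_{\rho_f} \to \rho_{\lambda_f}$ mediates between the comultiplication datum $\sigma$ that defines $\Delta_X$ and the monad structure that defines the composite lifting. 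Tracing both sides of the displayed identity through the isomorphism of Proposition~\ref{recognition}, the distributive-law axiom should make them agree. Should this abstract matching prove delicate, the fallback is a direct diagram chase from the explicit small-object construction of Proposition~\ref{universalrealisation}, where $\sigma$ arises as a canonical comparison out of the defining pushout and $\bullet$ unwinds to an iterated diagonal fill; but in either approach the crux is the same, namely that it is the distributive law that reconciles the $\mathsf L$-side and $\mathsf R$-side structures.
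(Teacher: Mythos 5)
Your proposal follows essentially the same route as the paper: reduce to showing that $\Delta_X$ underlies a morphism of $\cat{AAF}/X$ (whence initiality forces it to be the unique map), dispose of the over-$X$ condition via the comonad laws, and verify lifting-compatibility by passing through the isomorphism $\cat{AAF}/X \cong \mathsf R_X\text{-}\mathrm{Alg}$ and invoking the natural w.f.s.\ axioms --- the paper likewise identifies the composite algebra structure $\theta_f$ explicitly (via Theorem~A.1 of \cite{Garner2008Understanding}) and reduces the check to a single commuting square settled by those axioms. The approach and key ideas match; the only difference is that the paper spells out the composite $\mathsf R_X$-algebra structure where you defer to the distributive law in slightly looser terms.
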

\begin{proof}
It suffices to check that $\Delta_X \colon QX \to QQX$
renders~\eqref{uniquemap} commutative, and that it respects the
chosen liftings. The first of these conditions follows from the
comonad axioms. For the second, we again make use of the
isomorphic between $\cat{AAF}/X$ and the category of algebras
for the monad $\mathsf R_X \colon \C/X \to \C/X$ obtained from
the universal algebraic realisation of $J$. To show that
$\Delta_X$ respects the chosen liftings in~\eqref{uniquemap} is
equally well to show that it respects the corresponding
$\mathsf R_X$-algebra structures on $\epsilon_X$ and
$\epsilon_X.\epsilon_{QX}$, and we now do so by explicit
calculation.

First let us introduce some notation: we write $f$ to denote
the unique map $0 \to X$ in $\C$. Now the map $\epsilon_X
\colon QX \to X$ is equally well the map $\rho_f \colon Pf \to
X$, and in these terms its $\mathsf R_X$-algebra structure is
the morphism
\begin{equation*}
    \cd{
        P\rho_f \ar[rr]^{\pi_f} \ar[dr]_{\rho_{\rho_f}} & & P f \ar[dl]^{\rho_f} \\ & X
    }
\end{equation*}
of $\C / X$. Likewise, the map $\epsilon_X . \epsilon_{QX}
\colon QQX \to X$ is equally well the map $\rho_f .
\rho_{\lambda_f} \colon P\lambda_f \to X$, in which terms its
$\mathsf R_X$-algebra structure will be given by a morphism $
\theta_f \colon P(\rho_f . \rho_{\lambda_f}) \to P\lambda_f$
over $X$. To describe this map we appeal to Theorem A.1
of~\cite{Garner2008Understanding}, which shows that it is given
by the following composite
\begin{equation*}
    P(\rho_f . \rho_{\lambda_f})
    \xrightarrow{\sigma_{\rho_f . \rho_{\lambda_f}}}
    P\lambda_{\rho_f . \rho_{\lambda_f}}
    \xrightarrow{P(1, P(\rho_{\lambda_f}, 1))}
    P(\lambda_{\rho_f}.\rho_{\lambda_f})
    \xrightarrow{P(1, \pi_f)}
    P\rho_{\lambda_f}
    \xrightarrow{\pi_{\lambda_f}}
    P \lambda_f\ \text.
\end{equation*}
Now, the map $\Delta_X \colon QX \to QQX$ is equally well the
map $\sigma_f \colon Pf \to P\lambda_f$, and so to check that
it is an $\mathsf R_X$-algebra map, and thereby complete the
proof, it suffices to show that the square
\begin{equation*}
    \cd[@C+1em]{
      P\rho_f \ar[r]^-{P(\sigma_f, 1)} \ar[d]_{\pi_f} & P(\rho_f.\rho_{\lambda_f}) \ar[d]^{\theta_f} \\
      Pf \ar[r]_-{\sigma_f} & P\lambda_f
    }
\end{equation*}
commutes; and this follows by a short calculation with the
axioms for a natural w.f.s.
\end{proof}

\begin{Ex}
We consider again the situation of Example~\ref{ex1}. Given a
chain complex $X$, the canonical choice of liftings for the map
$\epsilon_X . \epsilon_{QX} \colon QQX \to X$ is given as
follows:
\begin{itemize}
\item For the base step, $k_0 \colon X_0 \to (QQX)_0$ is
    given by $k_0(x) = [[x]]$;
\item For the inductive step, $k_{i+1} \colon X_{i+1}
    \times Z(QQX)_i \to (QQX)_{i+1}$ is given by
    $k_{i+1}(x, z) = [[x, \epsilon_{QX}(z)], z]$.
\end{itemize}
%\begin{align*}
%k_0 \colon X_0 & \to (QQX)_0\\
%x & \mapsto [[x]]\ \text;\\
%k_{i+1} \colon X_{i+1} \times Z(QQX)_i & \to (QQX)_{i+1}\\
%(x, z) & \mapsto [[x, \epsilon_{QX}(z)], z]\ \text.
%\end{align*}
It follows from this, the description of the initiality of
$\epsilon_X$ given in Example~\ref{ex1}, and
Proposition~\ref{inducecomult}, that the comultiplication map
$\Delta_X \colon QX \to QQX$ has components given by the
following recursion:
\begin{itemize}
\item For the base step, $\Delta_0 \colon (QX)_0 \to
    (QQX)_0$ is specified by $\Delta_0([x]) = [[x]]$;
\item For the inductive step, $(\Delta_X)_{i+1} \colon
    (QX)_{i+1} \to (QQX)_{i+1}$  is specified by
    $\Delta_{i+1}([x,z]) = [[x, z],
\Delta_i(z)]$.
\end{itemize}
\end{Ex}

%We now have enough preliminary material to give our general
%notion of homomorphism for higher categories. Suppose given, as
%in the Introduction, an essentially-algebraic notion of higher
%category, whose category of strict homomorphisms we denote by
%$\cat{HCat}_\mathrm s$; and let there be distinguished in
%$\cat{HCat}_\mathrm s$ a set of maps $J$, which are to be
%thought of as the inclusions of basic $n$-dimensional
%boundaries into basic $n$-dimensional cells. As
%$\cat{HCat}_\mathrm s$ is locally finitely presentable, we may
%apply the results described above to obtain the universal
%cofibrant replacement comonad $\mathsf Q$ generated by $J$. We
%now define the \emph{category of homomorphisms} $\cat{HCat}$ to
%be the co-Kleisli category of $\mathsf Q$.
%

\section{Homomorphisms of tricategories}\label{tricatdef}
\looseness=-1 In the following Sections we give two
applications of the general theory described above. In the
present Section, we shall use it to develop a notion of
homomorphism between the \emph{tricategories}
of~\cite{Gordon1995Coherence}. We begin in \S\ref{tri1} by
defining a category $\cat{Tricat}_\mathrm s$ of tricategories
and strict homomorphisms, and distinguishing in it a suitable
set of generating cofibrations. Then in \S\ref{tri2} we
characterise the universal cofibrant replacement comonad this
generates; and finally in \S\ref{tri3}, we extract a concrete
description of the co-Kleisli category of this comonad, which
will be the desired category of trihomomorphisms.

Since there is already in the literature a notion of
trihomomorphism (see~\cite[\S 3]{Gordon1995Coherence}, for
instance), it is reasonable to ask why we should go to the
effort of defining another one. There are two main reasons to
do so. The first is that it illustrates the operation of our
machinery in a relatively elementary case, which will prove
useful in understanding the $\omega$-categorical application of
Section~\ref{omcat} below. The second is that the
trihomomorphisms we describe are better-behaved than the
existing ones: in particular, ours admit a \emph{strictly}
associative and unital composition.

Now, the fact that our trihomomorphisms are better-behaved
could suggest that they are insufficiently weak, and hence that
our general machinery is not fit for the task. In order to show
that this is not the case, we give in Section~\ref{s4} a
careful comparison between our trihomomorphisms and those
of~\cite{Gordon1995Coherence}, and show that the two are the
same in a suitable sense, by proving a biequivalence between
two bicategories whose $0$-cells are tricategories, and whose
$1$-cells are trihomomorphisms of the two different kinds.

\subsection{Generating cofibrations}\label{tri1}
The notion of tricategory was introduced
in~\cite{Gordon1995Coherence}, yet the formulation given there
is unsuitable for our purposes since it is not wholly
algebraic: it asserts certain morphisms of a hom-bicategory to
be equivalences without requiring choices of pseudo-inverse to
be provided. Instead we shall adopt\footnote{With one minor
alteration: we ask that the homomorphisms of bicategories $1
\to \T(x, x)$ picking out units should be normalised. This
change is not substantive, since any homomorphism of
bicategories can be replaced with a normal one; but it does
reduce slightly the amount of coherence data we have to deal
with.} the definition of~\cite{Gurski2006algebraic}, for which
such choices are part of the data.
\begin{Defn}
The category $\cat{Tricat}_\mathrm s$ has as objects,
tricategories in the sense
of~\cite[Chapter~4]{Gurski2006algebraic}; and as morphisms
$\mathcal T \to \mathcal U$, assignations on $0$-, $1$-, $2$-
and $3$-cells which commute with the tricategorical operations
on the nose.
\end{Defn}
We observe that $\cat{Tricat}_\mathrm s$ is the category of
models of an essentially-algebraic theory, and as such is
locally finitely presentable. Therefore we may use
Corollary~\ref{universalcofib} to describe a cofibrant
replacement comonad on it, as soon as we have distinguished in
it a suitable set of generating cofibrations. Before doing so,
we observe that underlying any tricategory is a
three-dimensional globular set; that is, a presheaf over the
category $\cat{G}_3$ generated by the graph
\begin{equation*}
    \cd{
    0 \ar@<3pt>[r]^-{\sigma} \ar@<-3pt>[r]_-{\tau} & 1 \ar@<3pt>[r]^-{\sigma} \ar@<-3pt>[r]_-{\tau} & 2 \ar@<3pt>[r]^-{\sigma} \ar@<-3pt>[r]_-{\tau} & 3}\ \text,
\end{equation*}
subject to the equations $\sigma \sigma = \tau \sigma$ and
$\sigma \tau = \tau \tau$. Thus there is a functor $V \colon
\cat{Tricat}_{\mathrm s} \to [{\cat G}_3^\op, \cat{Set}]$
which, because it is given by forgetting essentially-algebraic
structure, has a left adjoint $K \colon [{\cat G}_3^\op,
\cat{Set}] \to \cat{Tricat}_\mathrm s$.

\begin{Defn}\label{gencofibstricat}
The \emph{generating cofibrations} of $\cat{Tricat}_\mathrm s$
are the morphisms $\set{\iota_n \colon \partial_n \to 2_n}{0 \leqslant
n \leqslant 4}$ obtained by applying the functor $K$ to the
morphisms $f_0, \dots, f_4$ of $[\cat G_3^\op, \cat{Set}]$
given as follows (where we write $y$ for the Yoneda embedding
$\cat G_3 \to [\cat G_3^\op, \cat{Set}]$):
\begin{itemize}
\item $f_0$ is the unique map $0 \to y_0$;
\item $f_1$ is the map $[y_\sigma, y_\tau] \colon y_0 + y_0
    \to y_1$;
\item $f_2$ and $f_3$ are the maps induced by the universal
    property of pushout in the following diagram (for $n =
    2, 3$):
\begin{equation*}
    \cd[@C+1em]{
        y_{n-2} + y_{n-2} \ar[r]^-{[y_\sigma,y_\tau]} \ar[d]_{[y_\sigma,y_\tau]} &
        y_{n-1} \ar[d] \ar[ddr]^{y_\tau}\\
        y_{n-1} \ar[r] \ar[drr]_{y_\sigma} &
        \star \pullbackcorner \ar@{.>}[dr]|{f_{n}} \\
        & & y_{n}
    }
\end{equation*}
\item $f_4$ is the map induced by the universal property of
    pushout in the following diagram:
\begin{equation*}
    \cd[@C+1em]{
        y_2 + y_2 \ar[r]^-{[y_\sigma,y_\tau]} \ar[d]_{[y_\sigma,y_\tau]} &
        y_3 \ar[d] \ar[ddr]^{\id}\\
        y_3 \ar[r] \ar[drr]_{\id} &
        \star \pullbackcorner \ar@{.>}[dr]|{f_4} \\
        & & y_{3}\rlap{ .}
    }
\end{equation*}
\end{itemize}
In diagrammatic terms, $f_0, \dots, f_4$ are the following
maps: \vskip0.3\baselineskip
%
%We are now in a position to apply the apparatus of the previous
%section to $\cat{Tricat}_\mathrm s$; to which end we must
%distinguish in it a set of maps---the ``basic $n$-cells'' and
%their ``boundaries''. There will be five of these, denoted by
%$\iota_n \colon
%\partial_n \to 2_n$ (for $0 \leqslant n \leqslant 4$), and obtained by taking the image
%under $K$ of the following five maps of $\hat{\cat G}_3$:
\begin{equation*}
\cd[@R+2em@C+1em]{\emptyset \ar@{.>}[d] \\ \bullet} \quad \text, \quad
\cd[@R+2em@C+1em]{ \bullet \ar@{}[r]_{}="a" & \bullet  \\
  \bullet \ar[r]^{}="b" \ar@{.>}"a"; "b"& \bullet}\quad \text, \quad
\cd[@R+2em@C+1em]{ \bullet \ar@/^1em/[r] \ar@/_1em/[r]_{}="a" & \bullet  \\
  \bullet \ar@/^1em/[r]^{}="b" \ar@/_1em/[r] \ar@{}[r]
  \ar@{=>}?(0.5)+/u  0.15cm/;?(0.5)+/d 0.15cm/ \ar@{.>}"a"; "b"& \bullet}\quad \text, \quad
\cd[@R+2em@C+1em]{ \bullet \ar@/^1em/[r] \ar@/_1em/[r]_{}="a" \ar@{}[r] \ar@{=>}?(0.65)+/u  0.15cm/;?(0.65)+/d 0.15cm/
  \ar@{}[r] \ar@{=>}?(0.35)+/u  0.15cm/;?(0.35)+/d 0.15cm/ & \bullet  \\
  \bullet \ar@{}[r] \ar@3?(0.5)+/l 0.13cm/;?(0.5)+/r 0.13cm/ \ar@/^1em/[r]^{}="b" \ar@/_1em/[r] \ar@{}[r]
  \ar@{=>}?(0.7)+/u  0.15cm/;?(0.7)+/d 0.15cm/ \ar@{}[r] \ar@{=>}?(0.3)+/u  0.15cm/;?(0.3)+/d 0.15cm/ &
  \bullet \ar@{.>}"a"; "b"}\quad \text, \quad
\cd[@R+2em@C+1em]{ \bullet \ar@{}[r] \ar@3?(0.5)+/l 0.13cm/+/u 0.13cm/;?(0.5)+/r 0.13cm/+/u 0.13cm/
  \ar@3?(0.5)+/l 0.13cm/+/d 0.26cm/;?(0.5)+/r 0.13cm/+/d 0.26cm/ \ar@/^1em/[r] \ar@/_1em/[r]_{}="a"
  \ar@{}[r] \ar@{=>}?(0.7)+/u  0.15cm/;?(0.7)+/d 0.15cm/ \ar@{}[r] \ar@{=>}?(0.3)+/u  0.15cm/;?(0.3)+/d 0.15cm/ & \bullet\\
  \bullet \ar@{}[r] \ar@3?(0.5)+/l 0.13cm/;?(0.5)+/r 0.13cm/ \ar@/^1em/[r]^{}="b" \ar@/_1em/[r] \ar@{}[r]
  \ar@{=>}?(0.7)+/u  0.15cm/;?(0.7)+/d 0.15cm/ \ar@{}[r] \ar@{=>}?(0.3)+/u  0.15cm/;?(0.3)+/d 0.15cm/ &
  \bullet\ar@{.>}"a"; "b"}\text.
\end{equation*}
\end{Defn}
\vskip\baselineskip
\begin{Defn}\label{tricatdef1} We define $\mathsf Q \colon \cat{Tricat}_\mathrm s \to \cat{Tricat}_\mathrm s$
to be the universal cofibrant replacement comonad for the
generating cofibrations of Definition~\ref{gencofibstricat},
and define the category $\cat{Tricat}$ of tricategories and
trihomomorphisms to be the co-Kleisli category of this comonad.
\end{Defn}

\subsection{Universal cofibrant replacement}\label{tri2}
The aim of this section is to obtain a concrete description of
the comonad~$\mathsf Q$. As in Example~\ref{ex1}, the easiest
way of doing this will not be to work through the construction
given in Proposition~\ref{universalrealisation}; rather, it
will be to describe directly the universal cofibrant
replacements and then prove our description correct by
appealing to Proposition~\ref{recognition}. In order to give
this description, we will need to develop some constructions on
tricategories. First we observe that any tricategory~$\T$ has
an underlying one-dimensional globular set, comprised of the
$0$- and $1$-cells of $\T$; and so we have an adjunction
\begin{equation*}
    L \dashv W \colon \cat{Tricat}_\mathrm s \to [\cat{G}_1^\op, \cat{Set}]
\end{equation*}
(where $\cat{G}_1$ is the category $1 \rightrightarrows 0$).
Given some $X \in [\cat{G}_1^\op, \cat{Set}]$, we may take $LX$
to have the same $0$-cells as $X$, and write $[f] \colon x \to
y$ for the image in $LX$ of a $1$-cell $f \colon x \to y$ of
$X$. We next describe what it means to \emph{adjoin a $2$-cell}
to a tricategory $\T$. Given a pair of parallel $1$-cells $f, g
\colon X \to Y$ in $\T$, there is a unique strict homomorphism
$(f,g) \colon \partial_2 \to \T$ sending the generating
$1$-cells of $\partial_2$ to $f$ and $g$ respectively. Since
$\cat{Tricat}_\mathrm s$ is locally finitely presentable, it is
in particular cocomplete, and so we may define a new
tricategory $\T[\alpha]$ by means of the following pushout:
\begin{equation}\label{adjoin2cell}
    \cd{
    \partial_2 \ar[d]_{\iota_2}
    \ar[r]^{(f,g)} &
    \T \ar[d]^{\eta} \\
    2_2 \ar[r]_{\overline \alpha} &
    \T[\alpha]\rlap{ .}
    }
\end{equation}
We say that $\T[\alpha]$ is obtained from $\T$ by adjoining a
$2$-cell $\alpha \colon f \Rightarrow g$. Indeed, to give a
strict homomorphism $F \colon \T[\alpha] \to \U$ is equally
well to give its restriction $F \eta \colon \T \to \U$ together
with the $2$-cell $Ff \Rightarrow Fg$ named by $F\bar \alpha
\colon D_2 \to \V$. By replacing the morphism $\iota_2$
in~\eqref{adjoin2cell} with a suitable coproduct of
$\iota_2$'s, we may extend this definition to deal with the
simultaneous adjunction to $\T$ of any set-sized collection of
$2$-cells.

Finally, we observe that there is an orthogonal (or strong)
factorisation system on $\cat{Tricat}_\mathrm s$ whose left
class comprises those strict homomorphisms which are bijective
on $0$-, $1$- and $2$-cells, and whose right class consists of
those strict homomorphisms which are locally locally fully
faithful; that is, those $F \colon \T \to \U$ for which the
following diagram of sets is a pullback:
\begin{equation*}
  \cd[@C+5em]{
    (V\T)_3 \ar[r]^{(VF)_3} \ar[d]_{(s, t)} &
    (V\U)_3 \ar[d]^{(s, t)} \\
    (V\T)_2 \times_{(V\T)_1} (V\T)_2 \ar[r]_{(VF)_2 \times_{(VF)_1} (VF)_2} & (V\U)_2 \times_{(V\U)_1} (V\U)_2\rlap{ .}
  }
\end{equation*}

We now give an explicit construction of the universal cofibrant
replacement $\epsilon_\T \colon Q \T \to \T$ of a tricategory
$\T$. We begin by defining $\T_1$ to be $LW\T$, the the free
tricategory on the underlying graph of $\T$, and \mbox{$e_1
\colon \T_1 \to \T$} to be the counit morphism. We now let
$\T_2$ be the tricategory obtained by adjoining the set of
$2$-cells
\begin{equation*}
    \set{[\alpha] \colon f \Rightarrow g}{\text{$f, g \colon X \to Y$ in $\T_1$ and $\alpha \colon e_1(f) \Rightarrow e_1(g)$ in $\T$}}
\end{equation*}
to $\T_1$, and define $e_2 \colon \T_2 \to \T$ to be the unique
strict homomorphism whose restriction to $\T_1$ is $e_1$, and
whose value at an adjoined $2$-cell $[\alpha] \colon f
\Rightarrow g$ is $\alpha \colon e_1(f) \Rightarrow e_1(g)$.
Finally, we obtain $Q\T$ and $\epsilon_\T$ by factorising $e_2$
as
\begin{equation}\label{fundamentalfac}
    e_2 = \T_2 \xrightarrow{\psi_\T} Q\T \xrightarrow{\epsilon_\T} \T
\end{equation}
where $\psi_\T$ is the identity on $0$-, $1$- and $2$-cells,
and $\epsilon_\T$ is locally locally fully faithful.

\begin{Prop}\label{counitchart}
The strict homomorphism $\epsilon_\T \colon Q\T \to \T$ is the
universal cofibrant replacement of $\T$.
\end{Prop}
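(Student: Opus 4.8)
The plan is to apply the recognition principle of Proposition~\ref{recognition}: to prove that $\epsilon_\T \colon Q\T \to \T$ is the universal cofibrant replacement, it suffices to equip it with a choice of liftings (with respect to the generating cofibrations $\iota_0, \dots, \iota_4$) making $(\epsilon_\T, \phi_\T)$ an initial object of $\cat{AAF}/\T$. So the first task is to \emph{unwind} what a choice of liftings against the $\iota_n$ amounts to for an arbitrary strict homomorphism $f \colon \Y \to \T$. Because each $\iota_n = K(f_n)$ is obtained by applying the free functor $K$ to a map of globular sets, and $K \dashv V$, a lifting of $\iota_n$ against $f$ is---by adjunction---the same as a lifting of $f_n$ against $Vf$ in $[\cat G_3^\op, \cat{Set}]$. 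Inspecting the $f_n$ drawn in Definition~\ref{gencofibstricat}, this should show that a choice of liftings for $f$ consists of: a section of $f$ on $0$-cells; for each pair of $1$-cells in $\Y$ lying over a given $1$-cell of $\T$ a section picking out $1$-cells; and similarly sections picking $2$- and $3$-cells over specified boundaries, exactly paralleling the chain-complex computation of Example~\ref{ex1}.

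Next I would exhibit the canonical choice of liftings $\phi_\T$ on $\epsilon_\T$ itself, coming from the explicit construction of $Q\T$ in~\eqref{fundamentalfac}. The $0$- and $1$-cell liftings are the ``inclusion of generators'' built into $\T_1 = LW\T$ (every $1$-cell of $\T$ has a formal lift $[f]$); the $2$-cell liftings are the adjoined cells $[\alpha]$ of the passage $\T_1 \rightsquigarrow \T_2$; and the $3$-cell liftings come from the locally-locally-fully-faithful factorisation $\T_2 \xrightarrow{\psi_\T} Q\T \xrightarrow{\epsilon_\T} \T$, since the pullback condition defining that right class says precisely that $3$-cells of $Q\T$ are uniquely determined by a boundary in $Q\T$ together with a $3$-cell of $\T$ lying over it---which is what a lifting against $\iota_4$ provides.

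Then I would verify initiality. Given any $(f, \{k_n\}) \in \cat{AAF}/\T$, I would construct the unique morphism $h \colon Q\T \to \Y$ over $\T$ respecting the liftings, by a recursion on cell-dimension mirroring Example~\ref{ex1}: on $0$- and $1$-cells $h$ is forced by the universal property of the free tricategory $\T_1$ together with $k_0, k_1$; the values on adjoined generating $2$-cells $[\alpha]$ are forced to be $k_2$ of the appropriate boundary, and then $h$ on all of $\T_2$ is determined because $\T_2$ is generated from $\T_1$ by these cells; finally $h$ extends (uniquely) across $\psi_\T$ to $Q\T$ on $3$-cells using $k_3$ and the fact that $\epsilon_\T$ is locally locally fully faithful. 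Commutativity with the projections to $\T$ and with the chosen liftings, and uniqueness, then follow at each stage of the recursion.

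The main obstacle I expect is the $2$- and $3$-cell steps, where one must be careful that $h$ is a genuine \emph{strict} homomorphism and not merely a dimensionwise function: the adjunction-of-cells description~\eqref{adjoin2cell} and the universal property of $LW\T$ guarantee that specifying $h$ on generating cells extends uniquely to something respecting all the tricategorical operations, but checking this rests on the precise universal properties of $\T_1$ and $\T_2$ and on the characterisation of the locally-locally-fully-faithful maps as a right factorisation class. In particular, identifying liftings against $\iota_4$ with the pullback condition, and hence with a \emph{unique} choice of $3$-cell, is what makes the top-dimensional step work and is where the argument is least formulaic; the lower-dimensional steps are then routine once the pattern of Example~\ref{ex1} is in place.
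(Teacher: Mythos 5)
Your proposal is correct and follows essentially the same route as the paper: appeal to Proposition~\ref{recognition}, observe that a choice of liftings forces local local full faithfulness and otherwise amounts to sections at dimensions $0$, $1$ and $2$, equip $\epsilon_\T$ with the liftings given by the generators $[f]$ and $[\alpha]$, and construct the unique comparison map by first using the freeness of $\T_1$ and $\T_2$ and then extending across $\psi_\T$ via the (bijective on $0$-, $1$-, $2$-cells, locally locally fully faithful) orthogonal factorisation system. The one small imprecision is that in the final extension step the locally locally fully faithful map one lifts against is the competitor $f \colon \Y \to \T$ rather than $\epsilon_\T$ itself; since you have already noted that any map admitting a choice of liftings must be locally locally fully faithful, this does not affect the argument.
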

\begin{proof}
We appeal to our recognition principle
Proposition~\ref{recognition}. First observe that a strict
homomorphism $F \colon \U \to \T$ may be equipped with a choice
of liftings with respect to the generating cofibrations only if
it is locally locally fully faithful; and that in this case, to
give such a choice is to give:
\begin{itemize}
\item For each $0$-cell $t \in \T$, a $0$-cell $k(t) \in
    \U$ with $Fk(t) = t$;
\item For each pair of $0$-cells $u, u'$ of $\U$ and each
    $1$-cell $f \colon Fu \to Fu'$ of $\T$, a $1$-cell
    $k(f,u,u') \colon u \to u'$ of $\U$ with $Fk(f,u,u') =
    f$;
\item For each parallel pair of $1$-cells $f, g \colon u
    \to u'$ of $\U$ and each $2$-cell $\alpha \colon Ff
    \Rightarrow Fg$ of $\T$, a $2$-cell $k(\alpha,f,g)
    \colon f \Rightarrow g$ of $\U$ with $Fk(\alpha,f,g) =
    \alpha$.
\end{itemize}
Observe now that $\epsilon_\T \colon Q \T \to \T$ is locally
locally fully faithful, and can be equipped with the following
choice of liftings:
\begin{itemize}
\item Since $Q \T$ has the same $0$-cells as $\T$, we may
    take $k(t) \defeq t$;
\item Since $Q \T$ has the same $1$-cells as $\T_1$, we may
    take $k(f, u, u') \defeq [f]$;
\item Since $Q \T$ has the same $2$-cells as $\T_2$, we may
    take $k(\alpha, f, g) \defeq [\alpha]$.
\end{itemize}
By Proposition~\ref{recognition}, if we can show that this data
determines an initial object of $\cat{AAF} / \T$, then we will
have shown $\epsilon_\T \colon Q \T \to \T$ to be the universal
cofibrant replacement of $\T$. So suppose $F \colon \V \to \T$
is another locally locally fully faithful strict homomorphism
equipped with a choice of liftings $k'$. From this we first
construct a strict homomorphism $H \colon \T_2 \to \V$; and to
do so, it suffices to specify where $H$ should sends each
$0$-cell $t$, each generating $1$-cell $[f] \colon t \to t'$,
and each generating $2$-cell $[\alpha] \colon f \Rightarrow g$.
So we take:
\begin{itemize}
   \item $H(t) = k'(t)$;
   \item $H([f] \colon t \to t') = k'(f, Ht, Ht')$;
   \item $H([\alpha] \colon f \to g) = k'(\alpha, Hf, Hg)$.
\end{itemize}
Now we observe that the outside of the following diagram commutes:
\begin{equation*}
    \cd{
        \T_2 \ar[r]^H \ar[d]_{\psi_\T} & \V \ar[d]^F \\
        Q\T \ar@{.>}[ur]\ar[r]_{\epsilon_\T} & \T
    }
\end{equation*}
and since $\psi_\T$ is bijective on $0$-, $1$- and $2$-cells,
and $F$ is locally locally fully faithful, it follows that
there is a unique strict homomorphism $K \colon Q \T \to \V$
(as indicated) rendering both induced triangles commutative.
It's now straightforward to prove that $K$ commutes with the
specified choices of lifting, and that moreover it is the
unique strict homomorphism that does so.
\end{proof}
Thus we have characterised the functor $Q$ and its counit
$\epsilon \colon Q \Rightarrow \id$; and it remains only to
describe the comultiplication $\Delta \colon Q \Rightarrow QQ$.
\begin{Prop}\label{comultt}
The strict homomorphism $\Delta_{\T} \colon Q{\T} \to QQ{\T}$
is uniquely determined by the following assignations:
\begin{itemize}
\item On $0$-cells, $\Delta_{\T}(t) = t$;
\item On $1$-cells, $\Delta_{\T}([f] \colon t \to t') =
    [[f]] \colon t \to t'$;
\item On $2$-cells, $\Delta_{\T}([\alpha] \colon f
    \Rightarrow g) = [[\alpha]] \colon \Delta_{\T}(f)
    \Rightarrow \Delta_{\T}(g)$;
\item On $3$-cells, $\Delta_{\T}(\Gamma \colon \alpha
    \Rrightarrow \beta) = \Gamma \colon \Delta_{\T}(\alpha)
    \Rrightarrow \Delta_{\T}(\beta)$.
\end{itemize}
\end{Prop}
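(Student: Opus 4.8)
The plan is to deduce the formula for $\Delta_\T$ by combining Proposition~\ref{inducecomult} with the explicit initiality of $\epsilon_\T$ established in Proposition~\ref{counitchart}, in exact parallel with the chain-complex computation carried out after Proposition~\ref{inducecomult}. By Proposition~\ref{inducecomult}, $\Delta_\T$ is the unique morphism of $\cat{AAF}/\T$ from $(\epsilon_\T, \phi_\T)$ to $(\epsilon_\T \cdot \epsilon_{Q\T},\ \phi_\T \bullet \phi_{Q\T})$; and since, by Proposition~\ref{counitchart}, the source is an initial object, $\Delta_\T$ is nothing more than the unique strict homomorphism $Q\T \to QQ\T$ over $\T$ that respects the chosen liftings. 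So it will suffice to (i) compute the composite choice of liftings $\phi_\T \bullet \phi_{Q\T}$ carried by $\epsilon_\T \cdot \epsilon_{Q\T} \colon QQ\T \to \T$, and then (ii) feed this through the recursive description of the unique map out of the initial object.

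For step (i), I would first unwind the concrete construction of \eqref{fundamentalfac} applied twice, so that the generating $1$- and $2$-cells of $QQ\T$ are exhibited as the doubly-bracketed cells $[[f]]$ and $[[\alpha]]$. Using the formula $(\phi \bullet \psi)(j,h,k) = \phi(j, h, \psi(j, fh, k))$ for the composite choice of liftings, together with the fact that each of $\epsilon_\T$ and $\epsilon_{Q\T}$ lifts a cell simply by bracketing it, one checks that the composite lifting $k'$ for $\epsilon_\T \cdot \epsilon_{Q\T}$ is given on $0$-cells by $k'(t) = t$, on $1$-cells by $k'(f,v,v') = [[f]]$, and on $2$-cells by $k'(\alpha, \phi, \psi) = [[\alpha]]$. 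The only delicate point here is the matching of sources and targets, which plays precisely the role of the correction term $\epsilon_{QX}(z)$ appearing in the chain-complex example.

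For step (ii), the proof of Proposition~\ref{counitchart} shows that the unique $\cat{AAF}/\T$-morphism from the initial object $\epsilon_\T$ to any $(F, k')$ is the strict homomorphism determined on generators by $t \mapsto k'(t)$, by $[f] \mapsto k'(f, -, -)$, and by $[\alpha] \mapsto k'(\alpha, -, -)$. Substituting the liftings computed in step (i) yields exactly $\Delta_\T(t) = t$, $\Delta_\T([f]) = [[f]]$, and $\Delta_\T([\alpha]) = [[\alpha]]$, as claimed. The behaviour on $3$-cells is then forced rather than freely chosen: since $\epsilon_{Q\T}$ is locally locally fully faithful and the comonad counit law gives $\epsilon_{Q\T} \cdot \Delta_\T = \id_{Q\T}$, every $3$-cell of $QQ\T$ is determined uniquely by its $2$-cell boundary together with its image under $\epsilon_{Q\T}$; this pins down $\Delta_\T(\Gamma) = \Gamma$ over the boundary $\Delta_\T(\alpha) \Rrightarrow \Delta_\T(\beta)$, completing the description.

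The main obstacle I anticipate lies in step (i): carefully evaluating the composite $\bullet$ of the two canonical choices of liftings and tracking the boundary data of the adjoined cells through the two successive applications of $Q$, so as to be sure that the doubly-bracketed cells $[[f]]$ and $[[\alpha]]$ genuinely have the correct sources and targets inside $QQ\T$. Once the composite lifting is correctly identified, step (ii) is a mechanical appeal to the recursion in the proof of Proposition~\ref{counitchart}, and the $3$-cell clause is immediate from local-local-full-faithfulness; so all the real content is concentrated in the bookkeeping of the bracketing and boundaries.
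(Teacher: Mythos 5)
Your proposal is correct and follows essentially the same route as the paper: both arguments combine Proposition~\ref{inducecomult} with the initiality of $(\epsilon_\T,\phi_\T)$ from Proposition~\ref{counitchart}, and both identify the composite liftings on $\epsilon_\T\cdot\epsilon_{Q\T}$ as double-bracketing, the only (inessential) difference being that the paper verifies that the explicitly stated map preserves these liftings while you run the initiality recursion forward to derive the same formulas, with the $3$-cell clause forced by local local full faithfulness in either case.
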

\begin{proof}
Observe first that the above data determine a unique
homomorphism $K \colon Q\T \to QQ\T$ commuting with the maps
into $\T$. Therefore by Proposition~\ref{inducecomult} it
suffices to check that $K$ also commutes with the canonical
choices of liftings for these maps. For $\epsilon_\T$, these
liftings are given as in Proposition~\ref{counitchart}; whilst
for $\epsilon_\T . \epsilon_{Q\T}$, they are given as follows:
\begin{itemize}
\item On $0$-cells, $k(t) = t$;
\item On $1$-cells, $k(f, u, u') = [[f]]$;
\item On $2$-cells, $k(\alpha, f, g) = [[\alpha]]$.
\end{itemize}
These liftings are manifestly preserved by $K$, so that $K =
\Delta_X$ as required.
\end{proof}

\subsection{Trihomomorphisms}\label{tri3}
Recall that $\cat{Tricat}$, the category of tricategories and
trihomomorphisms, is defined to be the co-Kleisli category of
the comonad $\mathsf Q$. Our goal in the remainder of this
Section is to give an elementary description of this category
that does not require us to invoke the comonad $\mathsf Q$.

Now, morphisms $\T \to \U$ in $\cat{Tricat}$ are given by
strict homomorphisms $Q\T \to \U$, and so we wish to
characterise these latter maps in a manner that does not refer
to $Q$. First let us observe that precomposition with $\psi_T$
sends each such map to a strict homomorphism $\T_2 \to \U$, and
these latter have an easy characterisation: they are given by a
map $F \colon \T_1 \to \U$---which, since $\T_1 = LW\T$, is
equally well a map of underlying $1$-globular sets $W\T \to
W\U$---together with, for every pair of arrows $f, g \colon X
\to Y$ in $\T_1$ and $2$-cell $\alpha \colon e_1(f) \Rightarrow
e_1(g)$ in $\T$, a $2$-cell $F \alpha \colon Ff \Rightarrow Fg$
of $\U$. Thus, in order to characterise the trihomomorphisms
$\T \to U$, it will be enough to determine what extra data is
required in order to extend a strict homomorphism $\T_2 \to \U$
to one $Q\T \to \U$. However, the construction we have of $Q\T$
from $\T_2$, in terms of the
factorisation~\eqref{fundamentalfac}, is not suitable for this
purpose; and so we shall now give an alternative construction,
one that builds $Q\T$ from $\T_2$ through the adjunction of
$3$-cells and of $3$-cell equations.

Recall that to \emph{adjoin a $2$-cell} to a tricategory $\T$
is to take a pushout of the form~\eqref{adjoin2cell}. By
replacing the morphism $\iota_2 \colon \partial_2 \to 2_2$ in this
diagram with $\iota_3$ or $\iota_4$, we can say what it means
to \emph{adjoin a $3$-cell} or to \emph{adjoin a $3$-cell
equation} to $\T$: and hence what it means to adjoin an
\emph{invertible} $3$-cell to $\T$---namely, to adjoin
$3$-cells $\alpha \Rrightarrow \beta$ and $\beta \Rrightarrow
\alpha$ together with equations asserting these $3$-cells to be
mutually inverse. We shall now give a construction of $Q\T$
from $\T_2$ through the adjunction first of a number of
(invertible) $3$-cells, and then of a number of $3$-cell
equations.
\begin{Defn} The tricategory $\T_3$ is the result of
adjoining the following $3$-cells to $\T_2$:
\begin{itemize}
\item $3$-cells $[\Gamma] \colon [\alpha] \Rrightarrow
    [\beta] \colon f \Rightarrow g$ for $\Gamma \colon
    \alpha \Rightarrow \beta \colon e_2(f) \Rightarrow e_2
   (g)$ in $\T$;
\item Invertible 3-cells $V_{\alpha, \beta} \colon [\beta]
    \circ [\alpha] \Rrightarrow [\beta \circ \alpha] \colon
    f \Rightarrow h$ for $[\alpha] \colon f \Rightarrow g$
    and $[\beta] \colon g \Rightarrow h$ in $\T_2$;
%\item $J_x \colon i_x \Rrightarrow [i_x] \colon I_x \Rightarrow I_x$ for
%    $x$ in $\T_2$;
\item Invertible 3-cells $H_{\alpha, \beta} \colon [\beta]
    \otimes [\alpha] \Rrightarrow [\beta \otimes \alpha]
    \colon h \otimes f \Rightarrow k \otimes g$ for
    $[\alpha] \colon f \Rightarrow g \colon x \to y$ and
    $[\beta] \colon h \Rightarrow k \colon y \to z$ in
    $\T_2$;
\item Invertible 3-cells $U_{f} \colon 1_f \Rrightarrow
    [1_{e_2(f)}] \colon f \Rightarrow f$ for $f \colon x
    \to y$ in $\T_2$;
\item Invertible 3-cells $L_f \colon l_f \Rrightarrow
    [l_{e_2(f)}] \colon I_y \otimes f \Rightarrow f$ for $f
    \colon x \to y$ in $\T_2$;
\item Invertible 3-cells $R_f \colon r_f \Rrightarrow
    [r_{e_2(f)}]  \colon f \otimes I_x \Rightarrow f$ for
    $f \colon x \to y$ in $\T_2$; and
\item Invertible 3-cells $A_{fgh} \colon a_{fgh}
    \Rrightarrow [a_{e_2(f),e_2( g),e_2 (h)}] \colon (h
    \otimes g) \otimes f \Rightarrow h \otimes (g \otimes
    f)$ for $f \colon x \to y$, $g \colon y \to z$ and $h
    \colon z \to w$ in $\T_2$.
\end{itemize}
\end{Defn}
The next step will be to adjoin a number of $3$-cell equations
to $\T_3$ to obtain a tricategory $\T_4$, which in
Proposition~\ref{t4iso} below we will be able to prove
isomorphic to $Q\T$. Before constructing $\T_4$, we give an
auxiliary definition which will make the task appreciably
simpler.
%
%We write $\ddot \T$ for the tricategory obtained in this way;
%and $e_4 $ for the
%unique strict homomorphism $\ddot \T \to \T$ which extends $e_2$, sends each
%$3$-cell $[\Gamma]$ to $\Gamma$, and sends each of the other adjoined $3$-cells
%to the appropriate identity $3$-cell.
%and before describing how $Q\T$ is obtained from $\ddot \T$, we give an
%auxiliary definition which will make this task appreciably simpler.
\begin{Defn}\label{definition}
For every $2$-cell $\gamma$ of $\T_2$, we define a $3$-cell
$\rho_\gamma \colon \gamma \Rrightarrow [e_2(\gamma)]$ of
$\T_3$ by structural induction over $\gamma$, exploiting the
fact that the $2$-cells under consideration are \emph{freely}
generated by those of the form $[\alpha] \colon f \Rightarrow
g$.
%
%
%In this definition, we reason by structural induction over $2$-cells of $\ddot
%\T$; that this is possible relies on the fact that these $2$-cells are
%
%\begin{Defn}\label{eta}
%For each $2$-cell $\gamma$ of $\ddot \T$ we define the invertible $3$-cell
%$\rho_\gamma \colon \gamma \Rrightarrow [e_2(\gamma)]$ according to the
%following clauses:
\begin{itemize}
\item If $\gamma = [\alpha] \colon f \Rightarrow g$ for
    some $\alpha \colon e_2(f) \Rightarrow e_2(g)$ in $\T$,
    we take $\rho_\gamma = \id_{[\alpha]}$;
\item If $\gamma = \beta \circ \alpha \colon f \Rightarrow
    h$ for some $\alpha \colon f \Rightarrow g$ and $\beta
    \colon g \Rightarrow h$, then we take $\rho_\gamma$ to
    be the composite
    \begin{equation*}
        \beta \circ \alpha \xrightarrow{\rho_\beta \circ \rho_\alpha} [e_2(\beta)] \circ [e_2(\alpha)] \xrightarrow{V_{e_2(\alpha), e_2(\beta)}} [e_2(\beta) \circ
        e_2(\alpha)] = [e_2(\beta \circ \alpha)]\ \text;
    \end{equation*}
\item If $\gamma = \beta \otimes \alpha \colon h \otimes f
    \Rightarrow k \otimes g$ for some $\alpha \colon f
    \Rightarrow g \colon x \to y$ and some $\beta \colon h
    \Rightarrow k \colon y \to z$, then we take
    $\rho_\gamma$ to be the composite
    \begin{equation*}
        \beta \otimes \alpha \xrightarrow{\rho_\beta \otimes \rho_\alpha} [e_2(\beta)] \otimes [e_2(\alpha)] \xrightarrow{H_{e_2(\alpha), e_2(\beta)}} [e_2(\beta) \otimes
        e_2(\alpha)] = [e_2(\beta \otimes \alpha)]\ \text;
    \end{equation*}
\item If $\gamma = 1_f \colon f \Rightarrow f$ for some $f
    \colon x \to y$, then we take $\rho_\gamma = U_f$;
%\item If $\gamma = i_x \colon I_x \Rightarrow I_x$ for some object $x$ of $
%    \ddot \T$---where $i_x$ denotes the value of the unit homomorphism $1
%    \to \ddot \T(x, x)$ on the unique $1$-cell of its domain---then we take
%    $\rho_\gamma$ to be the composite
%\begin{equation*}
%    i_x \xrightarrow{\cong} 1_{I_x} \xrightarrow{U_{I_x}} [1_{I_x}] \xrightarrow{[\cong]} [i_x]\ \text;
%\end{equation*}
%where the unnamed isomorphisms are constraint cells for the homomorphisms
%$1 \to \ddot \T(x, x)$ and $1 \to \T(x, x)$ respectively;
\item If $\gamma = l_f$, $r_f$ or $a_{fgh}$, then we take
    $\rho_\gamma = L_f$, $R_f$ or $A_{fgh}$ respectively;
\item If $\gamma = l^\centerdot_f \colon f \Rightarrow I_y
    \otimes f$---where we recall
    from~\cite{Gurski2006algebraic} that such a $2$-cell
    participates in a specified adjoint equivalence
    $(\eta_f, \epsilon_f)$ with $l_f$---then we obtain
    $\rho_\gamma$ as follows. First we define a $3$-cell
    $\tilde\eta_f \colon 1_f \Rrightarrow [l_{e_2(f)}]
    \circ [l^\centerdot_{e_2(f)}]$ as the composite
\begin{equation}\label{etilde}
          1_f \xrightarrow{U_f}
          [1_{e_2(f)}] \xrightarrow{[\eta_{e_2(f)}]}
          [l_{e_2(f)} \circ l^\centerdot_{e_2(f)}] \xrightarrow{V^{-1}}
          [l_{e_2(f)}] \circ [l^\centerdot_{e_2(f)}]\ \text;
\end{equation}
and now we take $\rho_\gamma$ to be the pasting composite
\begin{equation}\label{paste}
    \cd[@+1em]{ \rtwocell[0.7]{dr}{\tilde \eta_f} &
      f \ar@{=}[r] \rtwocell[0.6]{dr}{L_f^{-1}} &
      f \ar[r]^{l^\centerdot_f} \rtwocell[0.3]{dr}{\epsilon_f} &
      I_y \otimes f \\
      f \ar[r]_{[l^\centerdot_{e_2(f)}]} \ar@{=}[ur] &
      I_y \otimes f \ar[u]_{[l_{e_2(f)}]} \ar@{=}[r] &
      I_y \otimes f \ar[u]_{l_f} \ar@{=}[ur] & {}
    }\ \text.
\end{equation}
The cases $\gamma = r^\centerdot_f$ and $\gamma =
a^\centerdot_{fgh}$ proceed analogously.
\end{itemize}
\end{Defn}

%In order to specify the domains and codomains of these
%$3$-cells, it will be helpful to have an explicit description of the $2$-cells
%of $\dot \T$.
%\begin{Prop}
%The $2$-cells of $\dot \T$ are inductively generated by:
%\begin{itemize}
%\item $2$-cells $[\alpha] \colon f \Rightarrow g$ for each $\alpha \colon
%    e_2(f) \Rightarrow e_2(g)$ in $\T$;
%\item Vertical identity cells $1_f \colon f \Rightarrow f$ for $f \colon x
%    \to y$ in $\dot \T$;
%\item Vertical composites $\beta \circ \alpha \colon f \Rightarrow h$ for
%    $\alpha \colon f \Rightarrow g$ and $\beta \colon g \Rightarrow h$ in
%    $\dot \T$;
%\item Horizontal identity cells $i_x \colon I_x \Rightarrow I_x$ for $x$ in
%    $\dot \T$;
%\item Horizontal composites $\beta \otimes \alpha \colon h \otimes f
%    \Rightarrow k \otimes g$ for $\alpha \colon f \Rightarrow g \colon x
%    \to y$ and $\beta \colon h \Rightarrow k \colon y \to z$ in $\dot \T$.
%\item Unit constraints $l_f \colon I_y \otimes f \Rightarrow f$ and
%    $l_f^{\bullet} \colon f \Rightarrow I_y \otimes f$  for $f\! \colon x
%    \to y$ in $\dot \T$;
%\item Unit constraints $r_f \colon f \Rightarrow f \otimes I_x$ and
%    $r_f^{\bullet} \colon f \otimes I_x \Rightarrow f$  for $f\! \colon x
%    \to y$ in $\dot \T$;
%\item Associativity constraint cells $a_{fgh} \colon (f \otimes g) \otimes
%    h \Rightarrow f \otimes (g \otimes h)$ and $a_{fgh}^\bullet \colon f
%    \otimes (g \otimes h) \Rightarrow (f \otimes g) \otimes h$ for $f
%    \colon x \to y$, $g \colon y \to z$ and $h \colon z \to w$ in $\dot
%    \T$.
%\end{itemize}
%\end{Prop}

\begin{Defn}
\label{start} The tricategory $\T_4$ is obtained by adjoining
the following $3$-cell equalities to $\T_3$. First we force
compatibility with composition in every dimension.
\begin{itemize}
\item For each $[\Gamma] \colon [\alpha] \Rrightarrow
    [\beta]$ and $[\Delta] \colon [\beta] \Rrightarrow
    [\gamma]$, we require that
\begin{equation*}
  [\Delta] \circ [\Gamma] = [\Delta \circ \Gamma] \colon [\alpha]
  \Rrightarrow [\gamma]\ \text;
\end{equation*}
\item For each $[\alpha] \colon f \Rightarrow g$ we require
    that
\begin{equation*}
    \id_{[\alpha]} = [\id_\alpha] : [\alpha] \Rrightarrow [\alpha]\ \text;
\end{equation*}
\item For each $[\Gamma] \colon [\alpha] \Rrightarrow
    [\beta] \colon f \Rightarrow g$ and $[\Delta] \colon
    [\gamma] \Rrightarrow [\delta] \colon g \Rightarrow h$
    we require that the following diagram should commute:
\begin{equation*}
    \cd{
    [\gamma] \circ [\alpha] \ar[r]^{V} \ar[d]_{[\Delta] \circ [\Gamma]} &
    [\gamma \circ \alpha] \ar[d]^{[\Delta \circ \Gamma]} \\
    [\delta] \circ [\beta] \ar[r]_{V} &
    [\delta \circ \beta]\rlap{ ;}
    }
\end{equation*}
\item For each $[\Gamma] \colon [\alpha] \Rrightarrow
    [\beta] \colon x \to y$ and $[\Delta] \colon [\gamma]
    \Rrightarrow [\delta] \colon y \to z$ we require that
    the following diagram should commute:
\begin{equation*}
    \cd{
    [\gamma] \otimes [\alpha] \ar[r]^{H} \ar[d]_{[\Delta] \otimes [\Gamma]} &
    [\gamma \otimes \alpha] \ar[d]^{[\Delta \otimes \Gamma]} \\
    [\delta] \otimes [\beta] \ar[r]_{H} &
    [\delta \otimes \beta]\rlap{ .}
    }
\end{equation*}
\end{itemize}
The remaining equations we adjoin ensure compatibility between
the structural $3$-cells of $\T$ and those of the tricategory
we are defining. We begin by considering associativity and
unitality constraints in the hom-bicategories.
\begin{itemize}
\item For each $[\alpha] \colon f \rightarrow g$, we
    require that the following diagrams should commute:
\begin{equation*}
    \cd[@C+1em]{
      1_g \circ [\alpha] \ar[r]^-{\rho} \ar[d]_{\cong} &
      [1_{e_2(g)} \circ \alpha] \ar[d]^{[\mathord{\cong}]} \\
      [\alpha] \ar@{=}[r] &
      [\alpha]
    } \qquad \text{and} \qquad
    \cd[@C+1em]{
      [\alpha] \circ 1_f \ar[r]^-{\rho} \ar[d]_{\cong} &
      [\alpha \circ 1_{e_2(f)}] \ar[d]^{[\mathord{\cong}]} \\
      [\alpha] \ar@{=}[r] &
      [\alpha]\rlap{ ;}
    }
\end{equation*}
\item For each $[\alpha] \colon f \rightarrow g$, $[\beta]
    \colon g \rightarrow h$ and $[\gamma] \colon h
    \rightarrow k$, we require that the following diagram
    should commute:
\begin{equation*}
    \cd[@C+1em]{
      ([\gamma] \circ [\beta]) \circ [\alpha] \ar[r]^-{\rho} \ar[d]_{\cong} &
      [(\gamma \circ \beta) \circ \alpha] \ar[d]^{[\mathord{\cong}]} \\
      [\gamma] \circ ([\beta] \circ [\alpha]) \ar[r]_-{\rho} &
      [\gamma \circ (\beta \circ \alpha)]\rlap{ .}
    }
\end{equation*}
\end{itemize}
Next we require compatibility with the $3$-cells which mediate
middle-four interchange and its nullary analogue.
\begin{itemize}
\item For each suitable $[\alpha]$, $[\beta]$, $[\gamma]$
    and $[\delta]$ we require the following diagram to
    commute:
\begin{equation*}
    \cd[@C+1em]{
    ([\delta] \otimes [\beta]) \circ ([\gamma] \otimes [\alpha]) \ar[r]^-{\rho} \ar[d]_{\cong} &
    [(\delta \otimes \beta) \circ (\gamma \otimes \alpha)] \ar[d]^{[\mathord{\cong}]} \\
    ([\delta] \circ [\gamma]) \otimes ([\beta] \circ [\alpha]) \ar[r]_-{\rho} &
    [(\delta \circ \gamma) \otimes (\beta \circ \alpha)]\rlap{ ;}
    }
\end{equation*}
\item For each $f \colon x \to y$ and $g \colon y \to z$,
    we require the following diagram to commute:
\begin{equation*}
    \cd[@C+1em]{
    1_{g \otimes f} \ar[r]^-{\rho} \ar[d]_{\cong} &
    [1_{e_2(g) \otimes e_2(f)}] \ar[d]^{[\mathord{\cong}]} \\
    1_g \otimes 1_f \ar[r]_-{\rho} &
    [1_{e_2(g)} \otimes 1_{e_2(f)}]\rlap{ .}
    }
\end{equation*}
\end{itemize}
Next we ensure compatibility with the pseudonaturality
cells for the associativity and unitality constraints $a$,
$l$ and $r$.
\begin{itemize}
\item For all suitable $2$-cells $[\alpha] \colon f
    \Rightarrow m$, $[\beta] \colon g \Rightarrow n$
    and $[\gamma] \colon h \Rightarrow p$, we require
    that the following diagram should commute:
\begin{equation*}
    \cd{
      a_{m,n,p} \circ (([\gamma] \otimes [\beta]) \otimes [\alpha])
        \ar[r]^-{\rho} \ar[d]_{\cong} &
      [a_{e_2(m),e_2(n),e_2(p)} \circ ((\gamma \otimes \beta) \otimes \alpha)]
        \ar[d]^{[\mathord{\cong}]}\\
      ([\gamma] \otimes ([\beta] \otimes [\alpha])) \circ a_{f,g,h}
        \ar[r]_-{\rho} &
      [(\gamma \otimes (\beta \otimes \alpha)) \circ a_{e_2(f),e_2(g),e_2(h)}]\rlap{ ;}
    }
\end{equation*}
\item For each $[\alpha] \colon f \Rightarrow g \colon
    x \to y$, we require that the following diagrams
    should commute:
\begin{equation*}
    \cd{
    l_{g} \circ (I_y \otimes [\alpha]) \ar[r]^-{\rho} \ar[d]_{\cong} &
    [l_{e_2(g)} \circ (I_y \otimes \alpha)] \ar[d]^{[\mathord{\cong}]}\\
    [\alpha] \circ l_f \ar[r]_-{\rho} &
    [\alpha \circ l_{e_2(f)}]
    } \qquad \text{and} \qquad
    \cd{
    r_{g} \circ ([\alpha] \otimes I_x) \ar[r]^-{\rho} \ar[d]_{\cong} &
    [r_{e_2(g)} \circ (\alpha \otimes I_x)] \ar[d]^{[\mathord{\cong}]}\\
    [\alpha] \circ r_f \ar[r]_-{\rho} &
    [\alpha \circ r_{e_2(f)}]\rlap{ .}
    }
\end{equation*}
\end{itemize}
Finally, we ensure compatibility with the coherence $3$-cells
$\pi$ and $\mu$.
\begin{itemize}
\item For all composable $1$-cells $f$, $g$, $h$ and $k$,
    we require that the following diagram should commute:
\begin{equation*}
    \cd{
      (k \otimes a_{fgh}) \circ (a_{f, h \otimes g, k} \circ (a_{ghk} \otimes f))
        \ar[d]_\rho \ar[r]^-{\pi} &
      a_{g \otimes f, h, k} \circ a_{f, g, k \otimes h} \ar[d]^\rho \\
      [(\dot  k \otimes a_{\dot f, \dot g, \dot h}) \circ (a_{\dot  f, \dot h \otimes \dot  g, \dot k} \circ (a_{\dot g, \dot h, \dot k} \otimes \dot f))]
        \ar[r]_-{[\pi]} &
      [a_{\dot g \otimes \dot f, \dot h, \dot k} \circ a_{\dot f, \dot g, \dot k \otimes \dot h}]
   }
\end{equation*}
where we write $\dot f$ as an abbreviation for $e_2(f)$,
and so on; \vskip0.5\baselineskip
\item For all $1$-cells $f \colon x \to y$ and $g \colon y
    \to z$ we require that the following diagram should
    commute:
\begin{equation*}
    \cd{
      (g \otimes l_f) \circ (a_{f, I_y, g} \circ (r^\centerdot_g \otimes f))
        \ar[d]_\rho \ar[r]^-{\mu} &
      1_{g \otimes f} \ar[d]^\rho \\
      [(e_2(g) \otimes l_{e_2(f)}) \circ (a_{e_2(f), I_y, e_2(g)} \circ (r^\centerdot_{e_2(g)} \otimes e_2(f)))]
        \ar[r]_-{[\mu]} &
      [1_{e_2(g) \otimes e_2(f)}]\rlap{ .}
   }
\end{equation*}
\end{itemize}
\end{Defn}

\begin{Prop}\label{t4iso}
The tricategory $\T_4$ is isomorphic to $Q\T$ in
$\cat{Tricat}_\mathrm s$.
\end{Prop}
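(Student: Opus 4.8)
The plan is to construct a comparison map $F \colon \T_4 \to Q\T$ in $\cat{Tricat}_\mathrm s$ that extends $\psi_\T \colon \T_2 \to Q\T$, and then to show it is invertible. Since $\T_4$ is obtained from $\T_2$ purely by adjoining $3$-cells and $3$-cell equalities, it agrees with $\T_2$---and hence with $Q\T$---on $0$-, $1$- and $2$-cells, and $F$ restricts to $\psi_\T$, which is the identity there by~\eqref{fundamentalfac}; so $F$ will be an isomorphism precisely when it is a bijection on $3$-cells. I would obtain this bijection by exhibiting a strict homomorphism $e_4 \colon \T_4 \to \T$ extending $e_2$ and proving it locally locally fully faithful: for then $e_4 = \epsilon_\T \circ F$, and since $\epsilon_\T$ is itself locally locally fully faithful by its construction in~\eqref{fundamentalfac}, the equality $e_4 = \epsilon_\T F$ on each $3$-cell hom-set forces $F$ to be bijective there too.

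To define $F$ I send each adjoined $3$-cell $[\Gamma] \colon [\alpha] \Rrightarrow [\beta]$ to the unique $3$-cell of $Q\T$ lying over $\Gamma$ under the locally locally fully faithful map $\epsilon_\T$, and send each structural generator $V$, $H$, $U$, $L$, $R$ and $A$ to the unique $3$-cell of $Q\T$ lying over the relevant identity; these latter are automatically invertible, matching the invertibility of the corresponding generators of $\T_3$, so the mutual-inverse equations are respected. It then remains to check that $F$ respects each adjoined $3$-cell equality; but since $\epsilon_\T$ is locally locally fully faithful, two $3$-cells of $Q\T$ agree as soon as their images under $\epsilon_\T$ do, so each equality may be verified after applying $\epsilon_\T$---that is, in $\T$ itself---where it follows from functoriality of $\epsilon_\T$ and the coherence axioms of $\T$. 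The homomorphism $e_4$ is defined analogously, sending $[\Gamma] \mapsto \Gamma$ and each of $V, H, U, L, R, A$ to an identity $3$-cell; the same reduction shows it well defined, and a routine structural induction then gives $e_4(\rho_\gamma) = \id_{e_2(\gamma)}$ for every $2$-cell $\gamma$ of $\T_2$, the adjoint-equivalence cases of Definition~\ref{definition} using the triangle identities in $\T$.

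The core of the argument is the local local full faithfulness of $e_4$. For parallel $2$-cells $\gamma, \delta \colon f \Rightarrow g$ of $\T_4$ I define a candidate inverse to $e_4 \colon \T_4(\gamma, \delta) \to \T(e_2\gamma, e_2\delta)$ by $\Gamma \mapsto \rho_\delta^{-1} \cdot [\Gamma] \cdot \rho_\gamma$, where $[\Gamma] \colon [e_2\gamma] \Rrightarrow [e_2\delta]$ and the $\rho$'s are the normalising $3$-cells of Definition~\ref{definition} (invertible, being pasted from invertible generators). Since $e_4(\rho_\gamma) = \id$ and $e_4([\Gamma]) = \Gamma$, this assignment is a section of $e_4$; that it is also a retraction amounts to the single \emph{normal-form identity}
\begin{equation*}
  \Theta = \rho_\delta^{-1} \cdot [\,e_4(\Theta)\,] \cdot \rho_\gamma \qquad \text{for every $3$-cell $\Theta \colon \gamma \Rrightarrow \delta$ of $\T_4$.}
\end{equation*}
Its uniqueness half is immediate, since applying $e_4$ shows any $\Gamma$ with $\Theta = \rho_\delta^{-1} [\Gamma] \rho_\gamma$ must equal $e_4(\Theta)$; so everything rests on the existence half.

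The existence half is the main obstacle, and is exactly the coherence content for which the equalities of Definition~\ref{start} were engineered. I would prove it by structural induction on the way $\Theta$ is built from the generating $3$-cells. The base cases treat the generators: for $V, H, U, L, R, A$ and $[\Gamma]$ the identity unwinds from the defining clauses of $\rho_\gamma$ together with the identity-compatibility equations, while for the structural $3$-cells of $\T_2$---the associativity, unit and interchange constraints of the hom-bicategories and the coherence cells $\pi$ and $\mu$---it is precisely the assertion of the correspondingly-shaped commuting square adjoined in Definition~\ref{start}. The inductive step, for vertical, $\circ$- and $\otimes$-composites and for whiskerings, uses the first block of equalities---$[\Delta \circ \Gamma] = [\Delta] \circ [\Gamma]$, $\id_{[\alpha]} = [\id_\alpha]$, and the compatibility of $V$ and $H$ with $[\thg]$---to show that the family $\{[\thg]\}$ is closed under all the operations, so that the normal form is preserved under composition. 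Granting this identity, $e_4$ is locally locally fully faithful, $F$ is bijective on $3$-cells, and the proof is complete; as in any coherence theorem the real labour lies in the inductive step, in checking that each adjoined square is invoked in exactly the right place.
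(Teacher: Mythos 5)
Your strategy is the paper's: factor $e_2$ through $\T_4$ via a map $e_4$ sending $[\Gamma]\mapsto\Gamma$ and the structural generators to identities, reduce everything to local local full faithfulness of $e_4$, produce the candidate preimage $\rho_\delta^{-1}\circ[\Gamma]\circ\rho_\gamma$, and establish uniqueness via the normal-form identity $\Theta=\rho_\delta^{-1}\circ[e_4(\Theta)]\circ\rho_\gamma$ proved by structural induction. (The paper packages the final step as essential uniqueness of the orthogonal (bijective on $0$-, $1$-, $2$-cells, locally locally fully faithful) factorisation rather than building $F\colon\T_4\to Q\T$ by hand, but that is a cosmetic difference.)

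There is one concrete gap in your induction. You assert that for each structural $3$-cell of $\T_2$ the normal-form identity ``is precisely the assertion of the correspondingly-shaped commuting square adjoined in Definition~\ref{start}.'' That is false for the unit and counit cells of the adjoint equivalences $l^\centerdot_f\dashv l_f$, $r^\centerdot_f\dashv r_f$ and $a^\centerdot_{fgh}\dashv a_{fgh}$: no square of the form~\eqref{commutativerho} is adjoined for $\eta_f\colon 1_f\Rrightarrow l_f\circ l^\centerdot_f$ or $\epsilon_f$, and these are genuine generating $3$-cells of $\T_4$ that your induction must pass through. The required commutativity does hold, but it has to be \emph{derived}: unwinding the definitions of $\tilde\eta_f$ in~\eqref{etilde} and of $\rho_{l^\centerdot_f}$ in~\eqref{paste}, the square for $\eta_f$ reduces to the statement that $L^\centerdot_f$ is the mate of $L_f^{-1}$ under the adjunctions $l^\centerdot_f\dashv l_f$ and $[l^\centerdot_{e_2 f}]\dashv[l_{e_2 f}]$, which is exactly how $\rho_{l^\centerdot_f}$ was constructed. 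Without this extra argument your base case is incomplete; with it, the proof closes exactly as in the paper.
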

\begin{proof}
Let us write $\phi$ for the canonical map $\T_2 \to \T_4$. We
begin by factorising $e_2 \colon \T_2 \to \T$ as
\begin{equation}\label{psi2}
    \T_2 \xrightarrow{\phi} \T_4 \xrightarrow{e_4}  \T\ \text.
\end{equation}
To do so we must first specify where $e_4$ will take each of
the adjoined $3$-cells in $\T_4$; and then check that the
images under $e_4$ of the adjoined $3$-cell equations are
satisfied. We do this by sending each $3$-cell $[\Gamma] \colon
[\alpha] \Rrightarrow [\beta]$ to $\Gamma \colon \alpha
\Rrightarrow \beta$; and each of the other $3$-cells $U$, $V$,
$H$, $L$, $R$ and $A$ to the appropriate identity morphism.
It's easy to see that the requisite $3$-cell equations are then
satisfied, and so we obtain the desired
factorisation~\eqref{psi2}. We observe that $\phi$ is bijective
on $0$-, $1$-\ and $2$-cells, and so if we are able to show
$e_4$ to be locally locally fully faithful, then---by the
essential uniqueness of such factorisations---we can deduce the
existence of an isomorphism $\theta \colon Q\T \cong \T_4$ as
desired.

Thus, given $2$-cells $\gamma$ and $\delta$ of $\T_4$ we aim to
show that every $3$-cell $\Gamma \colon e_4(\gamma)
\Rrightarrow e_4(\delta)$ of $\T$ has the form $e_4(\tilde
\Gamma)$ for a unique $3$-cell $\tilde \Gamma \colon \gamma
\Rrightarrow \delta$ of $\T_4$. Now, by
Definition~\ref{definition}, we have invertible $3$-cells
$\rho_\gamma \colon \gamma \Rrightarrow [e_4(\gamma)]$ and
$\rho_\delta \colon \delta \Rrightarrow [e_4(\delta)]$, and by
structural induction can show that these maps are sent by $e_4$
to identity $3$-cells. Accordingly, the $3$-cell
\begin{equation*}
    \tilde \Gamma \defeq \gamma \xrightarrow{\rho_\gamma} [e_4(\gamma)] \xrightarrow{[\Gamma]}
      [e_4(\delta)] \xrightarrow{\rho_\delta^{-1}} \delta
\end{equation*}
of $\T_4$ satisfies $e_4(\tilde \Gamma) = \Gamma$; and it
remains only to show that it is unique with this property. We
shall do this by proving that, for every $3$-cell $\Delta
\colon \gamma \Rrightarrow \delta$ of $\T_4$, the following
square commutes:
\begin{equation}\label{commutativerho}
    \cd[@C+2em]{
      \gamma \ar[r]^{\Delta} \ar[d]_{\rho_\gamma} &
      \delta \ar[d]^{\rho_\delta}\\
      [e_4(\gamma)] \ar[r]_{[e_4(\Delta)]} &
      [e_4(\delta)]
    }\ \text;
\end{equation}
as then $e_4(\Delta) = \Gamma$ implies that $\Delta =
\rho_\delta^{-1} \circ [e_4(\Delta)] \circ \rho_\gamma =
\rho_\delta^{-1} \circ [\Gamma] \circ \rho_\gamma = \tilde
\Gamma$ as required. Since the $3$-cells of $\T_4$ are
generated---albeit not freely---by those of the form
$[\Gamma]$, $U$, $V$, $H$, $A$, $L$ and $R$, we may obtain
commutativity in~\eqref{commutativerho} by a structural
induction on the form of $\Delta$. The commutativity is
immediate when $\Delta$ is one of the generating $3$-cells
listed above; and has been explicitly adjoined in all cases
where $\Delta$ is a derived $3$-cell of $\T_4$, save for that
where $\Delta$ is a unit or counit map for one of the adjoint
equivalences $l^\centerdot_f \dashv l_f$, $r^\centerdot_f
\dashv r_f$ or $a^\centerdot_{fgh} \dashv a_{fgh}$. As a
representative sample of these cases, we show the square
\begin{equation*}
    \cd{
      1_{f} \ar[r]^-{\eta_f} \ar[d]_{\rho} & l_f \circ l^\centerdot_f \ar[d]^{\rho} \\
      [1_{e_2(f)}] \ar[r]_-{[\eta_f]} &
      [l_{e_2(f)} \circ l^\centerdot_{e_2(f)}]
    }
\end{equation*}
to be commutative. Writing $\tilde \eta_f$ for the $3$-cell
$1_f \Rrightarrow [l_{e_2 f}] \circ [l^\centerdot_{e_2 f}]$ of
equation~\eqref{etilde} and $L^\centerdot_f$ for the $3$-cell
$l^\centerdot_f \Rrightarrow [l^\centerdot_{e_2 f}]$
of~\eqref{paste}, this is equally well to show that
\begin{equation*}
    \cd[@C+1.5em]{
      1_{f} \ar[r]^-{\eta_f} \ar[d]_-{\tilde \eta_f} & l_f \circ l^\centerdot_f \ar[d]^{\id \circ L^\centerdot_f} \\
      [l_{e_2(f)}] \circ [l^\centerdot_{e_2(f)}] \ar[r]_-{L_f^{-1} \circ \id} &
      l_{f} \circ [l^\centerdot_{e_2(f)}]
    }
\end{equation*}
commutes; which follows by observing that $L^\centerdot_f$ is
the mate of $L_f^{-1}$ under the adjunctions $l^\centerdot_f
\dashv l_f$ and $[l^\centerdot_{e_2 f}] \dashv [l_{e_2 f}]$.
%It remains only to verify
%commutativity in~\eqref{commutativerho} in those cases where $\Delta$ is a
%binary composite of $3$-cells along a $0$-, $1$-\ or $2$-cell boundary; and
%these follow by an appeal to the inductive hypothesis followed by an
%application of the corresponding adjoined equality.
\end{proof}

We may now assemble all of the above calculations to give an
elementary description of the category $\cat{Tricat}$. In order
to give this without referencing the comonad $\mathsf Q$, we
will first need to introduce some notation. For objects $x, y$
of a tricategory $\T$, we define a \emph{formal composite of
$1$-cells} $f \colon x \dashrightarrow y$ by the following
clauses:
\begin{itemize}
\item If $x \in \T$ then $I_x \colon x \dashrightarrow x$;
\item If $f \colon x \to y$ in $\T$ then $[f] \colon x
    \dashrightarrow y$;
\item If $f \colon x \dashrightarrow y$ and $g \colon y
    \dashrightarrow z$ then $g \otimes f \colon x
    \dashrightarrow z$.
\end{itemize}
For each formal composite $f \colon x \dashrightarrow y$ we
recursively define its \emph{realisation} $\abs{f} \colon x \to
y$ by taking $\abs{\,[f]\,} = f$, $\abs{I_x} = I_x$ and $\abs{g
\otimes f} = \abs g \otimes \abs f$. Moreover, if given a
second tricategory $\U$ and a source-\ and target-preserving
assignation $F$ from the $0$- and $1$-cells of $\T$ to those of
$\U$, then we induce a mapping from formal composites $x
\dashrightarrow y$ to those $Fx \dashrightarrow Fy$ by another
recursion; we take $F[f] = [Ff]$, $FI_x = I_{Fx}$ and $F(g
\otimes f) = Fg \otimes Ff$. We may now give our elementary
restatement of the definition of $\cat{Tricat}$; that it is in
accordance with Definition~\ref{tricatdef1} is a direct
consequence of Propositions~\ref{comultt} and~\ref{t4iso}.

\begin{Defn}\label{tricatdef2}
The category $\cat{Tricat}$ has as its objects, the
tricategories of~\cite[Chapter~4]{Gurski2006algebraic}; whilst
its maps $F \colon \mathcal T \to \mathcal U$ are given by the
following basic data:
\begin{itemize}
\item For each $x \in \T$, an object $Fx \in \U$;
\item For each $f \colon x \to y$ of $\T$, a $1$-cell $Ff
    \colon Fx \to Fy$ of $\U$;
\item For each $f, g \colon x \dashrightarrow y$ and
    $\alpha \colon \abs f \Rightarrow \abs g$ in $\T$, a
    $2$-cell $F_{f,g}(\alpha) \colon \abs{Ff} \Rightarrow
    \abs{Fg}$ of $\U$;
\item For each $f, g \colon x \dashrightarrow y$, each
    $\alpha, \beta \colon \abs f \Rightarrow \abs g$ and
    each $\Gamma \colon \alpha \Rrightarrow \beta$ in $\T$,
    a $3$-cell $F_{f,g}(\Gamma) \colon F_{f,g}(\alpha)
    \Rrightarrow F_{f,g}(\beta)$ of $\U$;
\end{itemize}
and the following coherence data:
\begin{itemize}
\item For each $f, g, h \colon x \dashrightarrow y$,
    $\alpha \colon \abs f \Rightarrow \abs g$ and $\beta
    \colon \abs g \Rightarrow \abs h$ of $\T$, an
    invertible $3$-cell $V_{\alpha, \beta} \colon
    F_{g,h}(\beta) \circ F_{f,g}(\alpha) \Rrightarrow
    F_{f,h}(\beta \circ \alpha)$  of $\U$;
\item For each $f, g \colon x \dashrightarrow y$, $h, k
    \colon y \dashrightarrow z$, $\alpha \colon \abs f
    \Rightarrow \abs g$ and $\beta \colon \abs h
    \Rightarrow \abs k$ of $\T$, an invertible $3$-cell
    $H_{\alpha, \beta} \colon F_{h,k}(\beta) \otimes
    F_{f,g}(\alpha) \Rrightarrow F_{h \otimes f, k \otimes
    g}(\beta \otimes \alpha)$ of $\U$;
\item For each $f \colon x \dashrightarrow y$ of $\T$, an
    invertible $3$-cell $U_f \colon 1_{\abs{Ff}}
    \Rrightarrow F_{f,f}(1_{\abs f})$;
\item For each $f \colon x \dashrightarrow y$ in $\T$,
    invertible $3$-cells $L_f \colon l_{\abs{Ff}}
    \Rrightarrow F_{I_y \otimes f, f}(l_\abs{f})$ and $R_f
    \colon r_{\abs{Ff}} \Rrightarrow F_{f \otimes I_x,
    f}(r_\abs{f})$ of $\U$;
\item For each $f \colon x \dashrightarrow y$, $g \colon y
    \dashrightarrow z$ and $h \colon z \dashrightarrow w$
    in $\T$, an invertible $3$-cell $A_{fgh} \colon
    a_{\abs{Ff}, \abs{Fg}, \abs{Fh}} \Rrightarrow F_{(h
\otimes g) \otimes f, h \otimes (g \otimes f)}(a_{\abs{f},
\abs g, \abs h})$ of $\U$
\end{itemize}
subject to fourteen coherence axioms corresponding to the
fourteen kinds of $3$-cell equation adjoined in
Definition~\ref{start}. We give one of these axioms as a
representative sample. Suppose given $f, g \colon x
\dashrightarrow y$ and $\alpha \colon \abs f \Rightarrow
\abs g$ in $\T$. Then we require that
\begin{equation*}
    \cd[@C+1em]{
      1_{\abs{Fg}} \circ F_{f,g}(\alpha) \ar[r]^-{U_g \circ \id} \ar[d]_{\cong} &
      F_{g,g}(1_{\abs g}) \circ F_{f,g}(\alpha) \ar[r]^-{V_{\alpha, 1_{\abs g}}} &
      F_{f,g}(1_{\abs g} \circ \alpha) \ar[d]^{F_{f,g}(\cong)} \\
      F_{f,g}(\alpha) \ar@{=}[rr] & &
      F_{f,g}(\alpha)
    }
\end{equation*}
should commute in $\U$.

The identities and composition of $\cat{Tricat}$ are given as
follows. The identity homomorphism $\T \to \T$ has all of its
basic data given by identity assignations, and all of its
coherence data given by identity $3$-cells; whilst for
homomorphisms $F \colon \T \to \U$ and $G \colon \U \to \V$,
their composite $GF \colon \T \to \V$ has basic data given by
\begin{itemize}
\item $(GF)(x) = G(F(x))$;
\item $(GF)(f) = G(F(f))$;
\item $(GF)_{f,g}(\alpha) = G_{Ff, Fg}(F_{f,g}(\alpha))$;
\item $(GF)_{f,g}(\Gamma) = G_{Ff, Fg}(F_{f,g}(\Gamma))$;
\end{itemize}
and coherence data obtained according to a common pattern which
we illustrate with the case of $U_f$. Given $f \colon x
\dashrightarrow y$ in $\T$, we define the $3$-cell $U_f \colon
1_{\abs{GFf}} \Rrightarrow GF_{f, f}( 1_{\abs f})$ of $\V$ to
be the composite
\begin{equation*}
1_{\abs{GFf}} \xrightarrow{U_{Ff}} G_{Ff, Ff}(1_{\abs{Ff}}) \xrightarrow{G_{Ff,
Ff}(U_f)} G_{Ff, Ff}(F_{f, f}(1_{\abs f})) = GF_{f, f}(1_{\abs f})\ \text.
\end{equation*}
\end{Defn}

\section{Biased and unbiased trihomomorphisms}\label{s4}
As promised above, we now give a comparison between the notion
of trihomomorphism given in Definition~\ref{tricatdef2} and the
one already existing in the literature, a suitable reference
for which is~\cite[\S 3.3]{Gurski2006algebraic}. As observed
above, the two notions cannot be isomorphic, since our
trihomomorphisms admit a strictly associative composition,
whereas those of~\cite{Gurski2006algebraic} do not; at best,
they form a bicategory (see~\cite{Garner2008low-dimensional}
for the details). Closer inspection reveals that our
homomorphisms are the richer structure: they explicitly assign
to each two-dimensional pasting diagram of the domain
tricategory a corresponding pasting diagram in the codomain.
For the trihomomorphisms of~\cite{Gurski2006algebraic} no such
assignation is provided; and though one may be derived from the
trihomomorphism data---as we shall see in
Proposition~\ref{lifting} below---the derivation is
non-canonical, and so only determined up to an invertible
$3$-cell. A similar phenomenon occurs in comparing the
\emph{unbiased} bicategories of~\cite[Chapter
1]{Leinster2004Operads}---which incorporate specified
composites for all possible one-dimensional pasting
diagrams---with ordinary, or \emph{biased}, bicategories---for
which only nullary and binary composites are supplied. Again,
from the latter we can derive the former; but again, in a
non-canonical way that is determined only up to isomorphism. In
recognition of this similarity, we adopt
\cite{Leinster2004Operads}'s terminology here, referring to the
homomorphisms of Definition~\ref{tricatdef2} as~\emph{unbiased
homomorphisms}, and to those of~\cite[\S
3.3]{Gurski2006algebraic} as~\emph{biased homomorphisms}.

Our goal in the remainder of this section will be to give a
precise comparison between these two notions of homomorphism.
We will define a $2$-category of unbiased homomorphisms and a
bicategory of biased homomorphisms, and prove these to be
biequivalent. In each case, the $2$-cells we consider are not
the most general ones---those which between the biased
homomorphisms are called \emph{tritransformations}---since
these do not admit a strictly associative composition. Instead
we consider a restricted subclass of the tritransformations,
those whose $1$-\ and $2$-cell components are all identity
maps: these are the tricategorical \emph{icons}\footnote{In
fact, the icons we consider are
in~\cite{Garner2008low-dimensional} called \emph{ico-icons}:
with the unadorned name being reserved for a more general
concept which we will not have use of here.}
of~\cite{Garner2008low-dimensional}, themselves a
generalisation of the bicategorical icons
of~\cite{Lack20062-nerves}. Since the $1$-\ and $2$-dimensional
data for a tricategorical icon is trivial, it may be specified
purely in terms of a collection of $3$-cells satisfying axioms;
and it is this which allows us to equip them with a strictly
associative composition.

\begin{Defn}\label{icon}
Let $F, G \colon \T \to \U$ be unbiased homomorphisms. An
\emph{unbiased icon} $\xi \colon F \Rightarrow G$ may exist
only if $F$ and $G$ agree on $0$-\ and $1$-cells; and is then
given by specifying, for every $f, g \colon x \dashrightarrow
y$ and $\alpha \colon \abs f \Rightarrow \abs g$ in $\T$, a
$3$-cell $\xi_{f, g}(\alpha) \colon F_{f, g}(\alpha)
\Rrightarrow G_{f, g}(\alpha)$ of $\U$, subject to the
following axioms.
\begin{itemize}
\item For each $\Gamma \colon \alpha \Rrightarrow \beta
    \colon \abs f \Rightarrow \abs g$ of $\T$, the
    following diagram should commute in $\U$:
\begin{equation*}
   \cd{
     F_{f, g}(\alpha) \ar[r]^-{F_{f, g}(\Gamma)} \ar[d]_{\xi_{f, g}(\alpha)} &
     F_{f, g}(\beta) \ar[d]^{\xi_{f, g}(\beta)} \\
     G_{f, g}(\alpha) \ar[r]_-{F_{f, g}(\Gamma)} &
     G_{f, g}(\beta)\rlap{ ;}
   }
\end{equation*}
\item For each $f, g, h \colon x \dashrightarrow y$,
    $\alpha \colon \abs f \Rightarrow \abs g$ and $\beta
    \colon \abs g \Rightarrow \abs h$ of $\T$, the
    following diagram should commute in $\U$:
\begin{equation*}
   \cd{
     F_{g,h}(\beta) \circ F_{f,g}(\alpha) \ar[r]^-{V_{\alpha, \beta}} \ar[d]_{\xi_{g,h}(\beta) \circ \xi_{f,g}(\alpha)} &
     F_{f, h}(\beta \circ \alpha) \ar[d]^{\xi_{f,h}(\beta \circ \alpha)} \\
     G_{g,h}(\beta) \circ G_{f,g}(\alpha) \ar[r]_-{V_{\alpha, \beta}} &
     G_{f, h}(\beta \circ \alpha)
   }
\end{equation*}
\item For each $f, g \colon x \dashrightarrow y$, $h, k
    \colon y \dashrightarrow z$, $\alpha \colon \abs f
    \Rightarrow \abs g$ and $\beta \colon \abs h
    \Rightarrow \abs k$ of $\T$, the following diagram
    should commute in $\U$:
\begin{equation*}
   \cd{
     F_{h,k}(\beta) \otimes F_{f,g}(\alpha) \ar[r]^-{H_{\alpha, \beta}} \ar[d]_{\xi_{h,k}(\beta) \otimes \xi_{f,g}(\alpha)} &
     F_{h \otimes f, k \otimes g}(\beta \otimes \alpha) \ar[d]^{\xi_{h \otimes f, k \otimes g}(\beta \otimes \alpha)} \\
     G_{h,k}(\beta) \otimes G_{f,g}(\alpha) \ar[r]_-{H_{\alpha, \beta}} &
     G_{h \otimes f, k \otimes g}(\beta \otimes \alpha)\rlap{ ;}
   }
\end{equation*}
\item For each $f \colon x \dashrightarrow y$ in $\T$, the
    following diagrams should commute in $\U$:
\begin{equation*}
   \cd{
     1_{\abs{Ff}} \ar[r]^-{U_f} \ar@{=}[d] &
     F_{f, f}(1_{\abs f}) \ar[d]^{\xi} \\
     1_{\abs{Gf}} \ar[r]_-{U_g}  &
     G_{f, f}(1_{\abs f})
   } \ ,\
   \cd{
     l_{\abs{Ff}} \ar[r]^-{L_f} \ar@{=}[d] &
     F_{I_y \otimes f, f}(l_{\abs f}) \ar[d]^{\xi} \\
     l_{\abs{Gf}} \ar[r]_-{L_g}  &
     G_{I_y \otimes f, f}(l_{\abs f})
   } \ ,\
   \cd{
     r_{\abs{Ff}} \ar[r]^-{R_f} \ar@{=}[d] &
     F_{f \otimes I_x, f}(r_{\abs f}) \ar[d]^{\xi} \\
     r_{\abs{Gf}} \ar[r]_-{R_g}  &
     G_{f \otimes I_x, f}(r_{\abs f})\rlap{ ;}
   }
\end{equation*}
\item For each $f \colon x \dashrightarrow y$, $g \colon y
    \dashrightarrow z$ and $h \colon z \dashrightarrow w$
    in $\T$, the following diagram should commute in $\U$:
\begin{equation*}
   \cd{
     a_{\abs{Ff}, \abs{Fg}, \abs{Fh}}
     \ar[r]^-{A_{fgh}} \ar@{=}[d] &
     F_{f \otimes (g \otimes h), (f \otimes g) \otimes h}(a_{\abs{f}, \abs g, \abs h}) \ar[d]^{\xi} \\
     a_{\abs{Gf}, \abs{Gg}, \abs{Gh}}
     \ar[r]_-{A_{fgh}}  &
     G_{f \otimes (g \otimes h), (f \otimes g) \otimes h}(a_{\abs{f}, \abs g, \abs h})\rlap{ .}
   }
\end{equation*}
\end{itemize}
With the evident $2$-cell composition, tricategories,
unbiased homomorphisms and unbiased icons form a
$2$-category which we denote by
$\tcat{Tricat}_{\mathrm{ub}}$.
\end{Defn}

We now give the corresponding notion of icon between biased
homomorphisms. The definition is very similar to the one just
given, and we have deliberately stated it in a way which
facilitates easy comparison between the two. A more geometric
statement of the axioms is given
in~\cite[Definition~2]{Garner2008low-dimensional}.

\begin{Defn}\label{icon2}
Let $F, G \colon \T \to \U$ be biased homomorphisms. A
\emph{biased icon} $\xi \colon F \Rightarrow G$ may exist only
if $F$ and $G$ agree on $0$-\ and $1$-cells; and is then given
by specifying, for every $\alpha \colon f \Rightarrow g$ in
$\T$, a $3$-cell $\xi(\alpha) \colon F(\alpha) \Rrightarrow
G(\alpha)$ of $\U$; for every object $x \in \T$, an invertible
$3$-cell
\begin{equation*}
\cd{
    I_{Fx} \ar@2[r]^{\iota^F_x} \ar@{=}[d] \dthreecell{dr}{M_x} & FI_x \ar@{=}[d] \\
    I_{Gx} \ar@2[r]_{\iota^G_x} & GI_x
}
\end{equation*}
of $\U$; and for each pair of $1$-cells $f \colon x \to y$, $g
\colon y \to z$ of $\T$, an invertible $3$-cell
\begin{equation*}
\cd{
    Fg \otimes Ff \ar@2[r]^{\chi^F_{f, g}} \ar@{=}[d] \dthreecell{dr}{\Pi_{f, g}} & F(g \otimes f) \ar@{=}[d] \\
    Gg \otimes Gf \ar@2[r]_{\chi^G_{f, g}} & G(g \otimes f)
}
\end{equation*}
of $\U$, all subject to the following axioms.
\begin{itemize}
\item For each $\Gamma \colon \alpha \Rrightarrow \beta$ of
    $\T$, the following diagram should commute in $\U$:
\begin{equation*}
   \cd{
     F(\alpha) \ar[r]^-{F(\Gamma)} \ar[d]_{\xi(\alpha)} &
     F(\beta) \ar[d]^{\xi(\beta)} \\
     G(\alpha) \ar[r]_-{F(\Gamma)} &
     G(\beta)\rlap{ ;}
   }
\end{equation*}
\item For each $\alpha \colon f \Rightarrow g$ and $\beta
    \colon g \Rightarrow h \colon x \to y$ of $\T$, the
    following diagram should commute in $\U$:
\begin{equation*}
   \cd{
     F(\beta) \circ F(\alpha) \ar[r]^-{\cong} \ar[d]_{\xi(\beta) \circ \xi(\alpha)} &
     F(\beta \circ \alpha) \ar[d]^{\xi(\beta \circ \alpha)} \\
     G(\beta) \circ G(\alpha) \ar[r]_-{\cong} &
     G(\beta \circ \alpha)\rlap{ ;}
   }
\end{equation*}
\item For each $\alpha \colon f \Rightarrow g \colon x \to
    y$ and $\beta \colon h \Rightarrow k \colon y \to z$ of
    $\T$, the following diagram should commute in $\U$:
\begin{equation*}
   \cd{
     \chi^F_{g,k} \circ (F(\beta) \otimes F(\alpha)) \ar[r]^-{\cong} \ar[d]_{\Pi_{g,k} \circ (\xi(\beta) \otimes \xi(\alpha))} &
     F(\beta \otimes \alpha) \circ \chi^F_{f, h} \ar[d]^{\xi(\beta \otimes \alpha) \circ \Pi_{f,h}} \\
     \chi^G_{g,k} \circ (G(\beta) \otimes G(\alpha)) \ar[r]_-{\cong} &
     G(\beta \otimes \alpha) \circ \chi^G_{f, h}\rlap{ ;}
   }
\end{equation*}
\item For each $f \colon x \to y$ in $\T$, the following
    diagrams should commute in $\U$:
\begin{gather*}
   \cd{
     1_{Ff} \ar[r]^-{\cong} \ar@{=}[d] &
     F(1_{f}) \ar[d]^{\xi} \\
     1_{Gf} \ar[r]_-{\cong}  &
     G(1_f)
   }\quad \text, \quad
   \cd{
     l_{Ff} \ar@{<-}[r]^-{\gamma^F_f} \ar@{=}[d] &
     F(l_{f}) \circ (\chi^F_{f, I_y} \circ (\iota^F_y \otimes 1_{Ff})) \ar[d]^{\xi \circ (\Pi \circ (M \otimes \id))} \\
     l_{Gf} \ar@{<-}[r]_-{\gamma^G_f}  &
     G(l_{f}) \circ (\chi^G_{f, I_y} \circ (\iota^G_y \otimes 1_{Gf}))
   } \\
   \text{and} \quad \cd{
     r_{Ff} \ar@{<-}[r]^-{\delta^F_f} \ar@{=}[d] &
     F(r_f) \circ (\chi^F_{I_x, f} \circ (1_{Ff} \otimes \iota^F_x)) \ar[d]^{\xi \circ (\Pi \circ (\id \otimes M))} \\
     r_{Gf} \ar@{<-}[r]_-{\delta^G_f}  & G(r_f) \circ
     (\chi^G_{I_x, f} \circ (1_{Gf} \otimes
     \iota^G_x))\rlap{ ;}
   }
\end{gather*}
\item For each $f \colon x \to y$, $g \colon y\to z$ and $h
    \colon z \to w$ in $\T$, the following diagram should
    commute in $\U$:
\begin{equation*}
   \cd{
     (\chi^F_{g \otimes f, h} \circ (1_{Fh} \otimes \chi^F_{fg})) \circ a_{Ff, Fg, Fh}
     \ar@{<-}[r]^-{\omega^F_{fgh}} \ar[d]_{(\Pi \circ (\id \otimes \Pi)) \circ \id} &
     F(a_{fgh}) \circ (\chi^F_{f,g \otimes h} \circ (\chi^F_{gh} \otimes 1_{Ff})) \ar[d]^{\xi(a_{fgh}) \circ (\Pi \circ (\Pi \otimes \id))} \\
     (\chi^G_{g \otimes f, h} \circ (1_{Fh} \otimes \chi^G_{fg})) \circ a_{Gf, Gg, Gh}
     \ar@{<-}[r]_-{\omega^G_{fgh}} &
     G(a_{fgh}) \circ (\chi^G_{f,g \otimes h} \circ (\chi^G_{gh} \otimes 1_{Gf}))}\rlap{ .}
\end{equation*}
\end{itemize}
It follows from~\cite[Section~2]{Garner2008low-dimensional}
that tricategories, biased homomorphisms and biased icons
form a bicategory $\tcat{Tricat}_\mathrm b$.
\end{Defn}

We will now show $\tcat{Tricat}_{\mathrm{ub}}$ and
$\tcat{Tricat}_{\mathrm{b}}$ to be biequivalent. First we show
that every unbiased homomorphism $\T \to \U$ gives rise to a
biased homomorphism, and vice versa; then we show that these
assignations give rise to an equivalence of categories
$\tcat{Tricat}_{\mathrm{ub}}(\T, \U) \simeq
\tcat{Tricat}_{\mathrm{b}}(\T, \U)$; and finally, we show that
these equivalences provide the local data for an
identity-on-objects biequivalence $\tcat{Tricat}_{\mathrm{ub}}
\simeq \tcat{Tricat}_{\mathrm{b}}$.

\begin{Prop}\label{lifting2}
To each unbiased homomorphism $F \colon \T \to \U$ we may
assign a biased homomorphism $F' \colon \T \to \U$ with the
same action on $0$-\ and $1$-cells.
\end{Prop}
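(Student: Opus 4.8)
The plan is to read off every piece of the biased structure from the unbiased datum of $F$ by restricting attention to the formal composites built from at most one binary $\otimes$ or from the nullary unit. On objects and $1$-cells we are forced to take $F'x = Fx$ and $F'f = Ff$; on a $2$-cell $\alpha \colon f \Rightarrow g$ of $\T$ we set $F'(\alpha) \defeq F_{[f],[g]}(\alpha)$ and on a $3$-cell $\Gamma \colon \alpha \Rrightarrow \beta$ we set $F'(\Gamma) \defeq F_{[f],[g]}(\Gamma)$, using throughout that $\abs{[f]} = f$ and $\abs{F[f]} = \abs{[Ff]} = Ff$. For the two adjoint equivalences I would use the comparison $2$-cells that $F$ associates to an identity between two formal composites with the same realisation: writing $I_x$ also for the nullary formal composite at $x$ (so that $\abs{I_x} = I_x$ and $F(I_x) = I_{Fx}$), I set
\begin{equation*}
  \chi^{F'}_{f,g} \defeq F_{[g]\otimes [f],\ [g \otimes f]}(\id_{g \otimes f}) \colon Fg \otimes Ff \Rightarrow F(g \otimes f)
\end{equation*}
and
\begin{equation*}
  \iota^{F'}_{x} \defeq F_{I_x,\ [I_x]}(\id_{I_x}) \colon I_{Fx} \Rightarrow FI_x\ \text{.}
\end{equation*}

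The first substantial step is to check that $\chi^{F'}_{f,g}$ and $\iota^{F'}_x$ are equivalences, and indeed may be chosen as parts of adjoint equivalences. I would prove the general statement that for \emph{any} two formal composites $p, q \colon x \dashrightarrow y$ with $\abs p = \abs q$, the $2$-cell $F_{p,q}(\id_{\abs p})$ is an equivalence with pseudo-inverse $F_{q,p}(\id_{\abs q})$. The witnessing invertible $3$-cells come from the coherence data: the cell $V_{\id,\id} \colon F_{q,p}(\id) \circ F_{p,q}(\id) \Rrightarrow F_{p,p}(\id_{\abs p})$ composed with $U_p^{-1} \colon F_{p,p}(\id_{\abs p}) \Rrightarrow 1_{\abs{Fp}}$ exhibits one composite as isomorphic to an identity, and the symmetric argument handles the other. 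The usual adjustment then upgrades this to an adjoint equivalence. Specialising $(p,q)$ to $([g]\otimes[f],\,[g\otimes f])$ and to $(I_x, [I_x])$ gives the claim for $\chi^{F'}$ and $\iota^{F'}$.

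It then remains to supply the pseudonaturality adjoint equivalences of $\chi^{F'}$ and $\iota^{F'}$ and the invertible modifications $\omega^{F'}$, $\gamma^{F'}$, $\delta^{F'}$, and to verify the biased homomorphism axioms. Each of these cells is assembled from a single unbiased coherence datum pasted with the comparison cells above: the pseudonaturality of $\chi^{F'}$ in each variable is built from $H$ together with the $3$-cell action $F_{f,g}(\Gamma)$; the associativity modification $\omega^{F'}$ is built from $A_{fgh}$ pasted with instances of $V$ and $H$; and $\gamma^{F'}$, $\delta^{F'}$ are built from $L_f$ and $R_f$ together with $U$ and the comparison cells. The fourteen equations adjoined in Definition~\ref{start}, transported through $F$, encode precisely the naturality of $V$, $H$, $A$, $L$, $R$ and their mutual compatibilities, and it is from these that the biased homomorphism axioms are to be extracted by pasting.

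The main obstacle is exactly this last extraction. The biased associativity and unit coherence laws are not the image of any one adjoined equation; each must be obtained by reassociating a formal composite all the way down to binary and nullary pieces and then pasting together several of the equations of Definition~\ref{start} (in particular the four compatibility-with-composition equations, which play the role of coassociativity for the comparison cells). I would organise this by first proving a coherence lemma for the comparison cells --- that any two pastings built from $V$, $H$, $U$, $A$, $L$, $R$ and the cells $F_{p,q}(\id)$ which share a source and target formal composite are equal --- after which each biased axiom collapses to an instance of an adjoined equation, and the compatibility of the chosen adjoint-equivalence data with $\omega^{F'}$, $\gamma^{F'}$, $\delta^{F'}$ becomes routine bookkeeping.
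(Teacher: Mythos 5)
Your construction is correct and is essentially the paper's: the same choices of $F'$ on cells, the same formulas $\chi^{F'}_{f,g}=F_{[g]\otimes[f],[g\otimes f]}(1_{g\otimes f})$ and $\iota^{F'}_x=F_{I_x,[I_x]}(1_{I_x})$ with adjoint inverses given by swapping the subscripts, and the same use of $V$, $U$ and $F(\cong)$ to produce the unit/counit isomorphisms, and of $H$, $A$, $L$, $R$ pasted with comparison cells for the pseudonaturality data and the modifications $\gamma$, $\delta$, $\omega$. Your general lemma that $F_{p,q}(\id_{\abs p})$ is an equivalence for any two formal composites with equal realisation, and the auxiliary coherence lemma for pastings of comparison cells, are sensible ways to organise the axiom verification that the paper declares straightforward and omits.
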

\begin{proof}
Suppose given an unbiased homomorphism $F \colon \T \to \U$. In
constructing the corresponding biased homomorphism $F'$, we
will give only the data and omit verification of the coherence
axioms, since these follow in a straightforward manner from the
axioms for $F$ and the tricategory axioms for $\U$. On $0$-\
and $1$-cells, $F'$ agrees with $F$; and on $2$-\ and $3$-cells
is given by:
\begin{equation*}
    F'(\alpha \colon f \Rightarrow g) = F_{[f],
    [g]}(\alpha) \quad \text{and} \quad
F'(\Gamma \colon \alpha \Rrightarrow \beta \colon f
    \Rightarrow g) = F_{[f], [g]}(\Gamma)\ \text.
\end{equation*}
The functoriality constraints for the homomorphisms of
bicategories $\T(x, y) \rightarrow \U(F'x, F'y)$ are given as
follows:
\begin{itemize}
\item For each $f \colon x \to y$ in $\U$, we take the
    constraint $3$-cell $1_{F'f} \cong F'(1_f)$ to be
    $U_{[f]} \colon 1_{Ff} \Rrightarrow F_{[f], [f]}(1_f)$;
\item For each $\alpha \colon f \Rightarrow g$ and $\beta
    \colon g \Rightarrow h$ in $\U$, we take the constraint
    $3$-cell $F'(\beta) \circ F'(\alpha) \cong F'(\beta
    \circ \alpha)$ to be $V_{\alpha, \beta} \colon F_{[g],
    [h]}(\beta) \circ F_{[f], [g]}(\alpha) \Rrightarrow
    F_{[f], [h]}(\beta \circ \alpha)$.
\end{itemize}
Next we provide the $2$-cell components of the pseudo-natural
transformations $\chi_{f,g}$ and $\iota_x$ and their adjoint
inverses $\chi^\centerdot_{f,g}$ and $\iota^\centerdot_x$. For
each $f \colon x \to y$ and $g \colon y \to z$ in $\T$ we take
\begin{align*}
    \chi_{f,g} &= F_{[g] \otimes [f], [g \otimes
    f]}(1_{g \otimes f}) \colon F'g \otimes F'f \Rightarrow F'(g \otimes
    f) \\
    \text{and} \quad \chi^\centerdot_{f,g} &= F_{[g \otimes f], [g] \otimes
    [f]}(1_{g \otimes f}) \colon F'(g \otimes f) \Rightarrow F'g \otimes
    F'f\ \text;
\end{align*}
whilst for each $x \in \T$ we take
\begin{align*}
    \iota_{x} &= F_{I_x, [I_x]}(1_{I_x}) \colon I_{F'x} \Rightarrow F'(I_x) \\
    \text{and} \quad \iota^\centerdot_{x} &= F_{[I_x], I_x}(1_{I_x}) \colon F'(I_x) \Rightarrow I_{F'x}\ \text.
\end{align*}
Given $2$-cells $\alpha \colon f \Rightarrow g \colon x \to y$
and $\beta \colon h \Rightarrow k \colon y \to z$ in $\T$, we
obtain the corresponding pseudonaturality $3$-cell for $\chi$
as the composite:
\begin{equation*}
\cd{
  \chi_{g,k} \circ (F'(\beta) \otimes F'(\alpha)) \ar[d]^= \\
  F_{[k] \otimes [g], [k \otimes
    g]}(1_{k \otimes g}) \circ (F_{[h], [k]}(\beta) \otimes F_{[f], [g]}(\alpha))
    \ar[d]^{\id \circ H} \\
  F_{[k] \otimes [g], [k \otimes
    g]}(1_{k \otimes g}) \circ F_{[h] \otimes [f], [k] \otimes [g]}(\beta \otimes \alpha)
    \ar[d]^{V} \\
  F_{[h] \otimes [f], [k \otimes
    g]}(1_{k \otimes g} \circ (\beta \otimes \alpha))
    \ar[d]^{F_{[h] \otimes [f], [k \otimes g]}(\mathord{\cong})} \\
  F_{[h] \otimes [f], [k \otimes
    g]}((\beta \otimes \alpha) \circ 1_{h \otimes f})
    \ar[d]^{V^{-1}} \\
  F_{[h \otimes f], [k \otimes g]}(\beta \otimes \alpha) \circ F_{[h] \otimes [f], [h \otimes f]}(1_{h \otimes f})
%    \ar[d]^{=} \\
  \ \rlap{$ = F'(\beta \otimes \alpha) \circ \chi_{f,h}\ \text.$}
}
\end{equation*}
We next require unit and counit isomorphisms for the adjoint
equivalences $\chi^\centerdot \dashv \chi$ and
$\iota^\centerdot \dashv \iota$. So given $f \colon x \to y$
and $g \colon y \to z$ in $\T$, we obtain the isomorphism
$1_{F'(g \otimes f)} \Rrightarrow \chi_{f,g} \circ
\chi^\centerdot_{f,g}$ as the following composite:
\begin{equation*}
\cd{
  \llap{$1_{F'(g \otimes f)} = $} \ 1_{F(g \otimes f)}
    \ar[d]^{U} \\
  F_{[g \otimes f], [g \otimes f]}(1_{g\otimes f})
    \ar[d]^{F_{[g \otimes f], [g \otimes f]}(\mathord{\cong})} \\
  F_{[g \otimes f], [g \otimes f]}(1_{g \otimes f} \circ 1_{g \otimes f})
    \ar[d]^{V^{-1}} \\
  F_{[g] \otimes [f], [g \otimes f]}(1_{g \otimes f}) \circ F_{[g \otimes f], [g] \otimes [f]}(1_{g \otimes f})
  \ \rlap{$= \chi_{f,g} \circ \chi^\centerdot_{f,g}\ \text;$}
}
\end{equation*}
the other three cases are dealt with similarly. It remains only
to give the invertible modifications $\gamma$, $\delta$ and
$\omega$ witnessing the coherence of the functoriality
constraints $\chi$ and $\iota$. The same argument pertains in
each case, and so we give it only for $\gamma$. Here, for each
$f \colon x \to y$ in $\T$, we must give an invertible $3$-cell
\begin{equation*}
\cd[@!C@C-0.5em]{ &
  F'I_y \otimes Ff
    \ar[r]^{\chi_{f, I_y}}
    \dtwocell{dr}{\gamma_f} &
  F'(I_y \otimes f)
    \ar[dr]^{F'(l_f)} \\
  I_{F'y} \otimes F'f
    \ar[rrr]_{l_{F'f}}
    \ar[ur]^{\iota_y \otimes 1_{F'f}} & & &
  F'f\ \text;}
\end{equation*}
and we obtain this as the composite:
\begin{equation*}
\xymatrix{
  F_{[I_y \otimes f], [f]}(l_f) \circ \big(F_{[I_y] \otimes [f], [I_y \otimes f]}(1_{I_y \otimes f}) \circ (F_{I_y, [I_y]}(1_{I_y}) \otimes 1_{Ff})\big)
    \ar[d]^{\id \circ (\id \circ (\id \otimes U))} \\
  F_{[I_y \otimes f], [f]}(l_f) \circ \big(F_{[I_y] \otimes [f], [I_y \otimes f]}(1_{I_y \otimes f}) \circ (F_{I_y, [I_y]}(1_{I_y}) \otimes F_{[f], [f]}(1_f)\big)
    \ar[d]^{\id \circ (\id \otimes H)} \\
  F_{[I_y \otimes f], [f]}(l_f) \circ \big(F_{[I_y] \otimes [f], [I_y \otimes f]}(1_{I_y \otimes f}) \circ F_{I_y \otimes [f], [I_y] \otimes [f]}(1_{I_y} \otimes 1_f)\big)
    \ar[d]^{V . (\id \circ V)} \\
  F_{I_y \otimes [f], [f]}(l_f \circ (1_{I_y \otimes f} \circ (1_{I_y} \otimes 1_f)))
    \ar[d]^{F_{I_y \otimes [f], [f]}(\cong)} \\
  F_{I_y \otimes [f], [f]}(l_f)
    \ar[d]^{L_f^{-1}} \\
  l_{Ff} \ \text.
 }
\end{equation*}
\end{proof}
\begin{Prop}\label{lifting}
To each biased homomorphism $F \colon \T \to \U$ we may assign
an unbiased homomorphism $F' \colon \T \to \U$ with the same
action on $0$-\ and $1$-cells.
\end{Prop}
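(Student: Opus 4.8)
The plan is to manufacture the extensive data demanded by Definition~\ref{tricatdef2} out of the biased structure carried by $F$, and---exactly as in the proof of Proposition~\ref{lifting2}---to supply only this data, leaving the verification of the fourteen coherence axioms as a routine consequence of the biased coherence axioms together with the tricategory axioms for $\U$. On $0$-\ and $1$-cells we simply set $F'$ equal to $F$, as the statement requires.

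The organising device is a family of \emph{canonical comparison equivalences}. For each formal composite $f \colon x \dashrightarrow y$ of $1$-cells of $\T$, I would define a $2$-cell
\[ \kappa_f \colon \abs{Ff} \Rightarrow F\abs f \]
of $\U$ by structural induction on $f$, taking $\kappa_{[f]}$ to be an identity, $\kappa_{I_x} \defeq \iota_x$, and $\kappa_{g \otimes f} \defeq \chi_{\abs f, \abs g} \circ (\kappa_g \otimes \kappa_f)$. Since each $\iota_x$ and each $\chi_{f,g}$ is part of a specified adjoint equivalence in the biased data, each $\kappa_f$ is again an equivalence, and by assembling the chosen $\iota^\centerdot$ and $\chi^\centerdot$ dually one obtains a specified adjoint pseudo-inverse $\kappa_f^\centerdot \colon F\abs f \Rightarrow \abs{Ff}$.

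With these in hand the action on $2$-\ and $3$-cells is given by conjugation: for $\alpha \colon \abs f \Rightarrow \abs g$ in $\T$ we set
\[ F'_{f,g}(\alpha) \defeq \kappa_g^\centerdot \circ F\alpha \circ \kappa_f \colon \abs{Ff} \Rightarrow \abs{Fg}, \]
and $F'_{f,g}(\Gamma)$ is $F\Gamma$ whiskered on either side by $\kappa_f$ and $\kappa_g^\centerdot$. The coherence cells $V$, $H$ and $U$ are then immediate: $V_{\alpha,\beta}$ uses a coherence isomorphism of the adjoint equivalence $\kappa_g$ to cancel the inner comparison and then the functoriality constraint $F\beta \circ F\alpha \cong F(\beta \circ \alpha)$; $U_f$ combines a coherence isomorphism of $\kappa_f$ with the unit constraint $1_{F\abs f} \cong F(1_{\abs f})$; and $H_{\alpha,\beta}$ is built from the pseudonaturality $3$-cell of $\chi$ together with the interchange constraints of $\U$, exactly along the lines of the composite displayed in the proof of Proposition~\ref{lifting2}.

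The genuinely delicate data are $L_f$, $R_f$ and $A_{fgh}$, and I expect these to be the main obstacle. Each must be extracted from the invertible modification---$\gamma$, $\delta$ and $\omega$ respectively---witnessing the coherence of the constraints $\chi$ and $\iota$ against the biased associativity and unit constraints. For instance $A_{fgh}$ should arise by pasting the component of $\omega$ at $\abs f, \abs g, \abs h$ with the instances of $\kappa$ for the two bracketings $(h \otimes g) \otimes f$ and $h \otimes (g \otimes f)$, unfolding $\kappa_{g \otimes f} = \chi \circ (\kappa_g \otimes \kappa_f)$ twice and using the associator of $\U$ to align the domains; the cells $L_f$ and $R_f$ arise similarly from $\gamma$ and $\delta$. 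Arranging these pastings so that their source and target are precisely those prescribed in Definition~\ref{tricatdef2}---so that the comparison equivalences bracket the modifications in the correct way---is the one part of the construction that calls for real care rather than mechanical substitution. Once all the data is assembled, the axioms follow as indicated, and we suppress the details.
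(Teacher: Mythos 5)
Your proposal follows essentially the same route as the paper's proof: the same recursively defined comparison equivalences $\kappa_f$ (identity on $[g]$, $\iota_x$ on units, $\chi \circ (\kappa \otimes \kappa)$ on tensors), the same conjugation formula $F'_{f,g}(\alpha) = \kappa^\centerdot_g \circ F\alpha \circ \kappa_f$, the same derivation of $V$, $H$, $U$ from the adjoint-equivalence and functoriality constraints, the same extraction of $L$, $R$, $A$ from the modifications $\gamma$, $\delta$, $\omega$ together with pseudonaturality, and the same appeal to the coherence theorem for biased homomorphisms to dispose of the fourteen axioms. The construction is correct as stated.
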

\begin{proof}
Let there be given a biased homomorphism $F \colon \T \to \U$.
We first define, for every $f \colon x \dashrightarrow y$ in
$\T$, an adjoint equivalence $2$-cell $\kappa_f \colon \abs{Ff}
\Rightarrow  F\abs f$ in $\U$. We do this by recursion on the
form of $f$.
\begin{itemize}
\item If $f = [g]$ for some $g \colon x \to y$, then we
    take $\kappa_f = \kappa^\centerdot_f = 1_{Fg} \colon Fg
    \Rightarrow Fg$.
\item If $f = I_x$ for some $x$, then we take $\kappa_f =
    \iota_x \colon I_{Fx} \Rightarrow FI_x$ and
    $\kappa^\centerdot_f = \iota^\centerdot_x$; and
\item If $f = h \otimes g$ for some $g \colon x
    \dashrightarrow z$ and $h \colon z \dashrightarrow y$,
    then we take $\kappa_f$ to be
\begin{equation*}
    \abs{F(h \otimes g)} = \abs {Fh} \otimes \abs{Fg} \xrightarrow{\kappa_h
    \otimes \kappa_g} F\abs {h} \otimes F\abs{g} \xrightarrow{\chi_{\abs g,
    \abs h}} F(\abs h \otimes \abs g) = F(\abs{h \otimes g})\ \text;
\end{equation*}
and give its adjoint inverse $\kappa^\centerdot_f$ dually.
\end{itemize}
We now define the unbiased homomorphism $F'$. To simplify
notation, we allow binary compositions to associate to the
right, and assert $0$-dimensional composition $\otimes$ to bind
more tightly than $1$-dimensional composition $\circ$. As
demanded by the Proposition, the basic data for $F'$ will agree
with that for $F$ on $0$-\ and $1$-cells; whilst on $2$-\ and
$3$-cells it is given by
\begin{equation*}
    F'_{f,g}(\alpha) = \kappa^\centerdot_g \circ F \alpha \circ \kappa_f
    \qquad \text{and} \qquad
F'_{f,g}(\Gamma) = \kappa^\centerdot_g \circ F \Gamma \circ \kappa_f\ \text.
\end{equation*}
The coherence data for $F'$ is given as follows. The invertible
$3$-cell $V_{\alpha, \beta} \colon F'_{g,h}(\beta) \circ
F'_{f,g}(\alpha) \Rrightarrow F'_{f,h}(\beta \circ \alpha)$ is
obtained as the chain of isomorphisms:
\begin{align*}
   & \mathrel{\phantom{\cong}} (\kappa^\centerdot_h \circ F\beta \circ \kappa_g) \circ (\kappa^\centerdot_g \circ F\alpha \circ \kappa_f)
   \\
   & \cong (\kappa^\centerdot_h \circ F\beta) \circ (\kappa_g \circ \kappa^\centerdot_g) \circ (F\alpha \circ \kappa_f)
   \\
   & \cong (\kappa^\centerdot_h \circ F\beta) \circ (F\alpha \circ \kappa_f)
   \\
   & \cong \kappa^\centerdot_h \circ F\beta \circ F\alpha \circ \kappa_f
   \\
   & \cong \kappa^\centerdot_h \circ F(\beta \circ \alpha) \circ \kappa_f
   \ \text;
\end{align*}
the invertible $3$-cell $H_{\alpha, \beta} \colon
F_{h,k}(\beta) \otimes F_{f,g}(\alpha) \Rrightarrow F_{h
\otimes f, k \otimes g}(\beta \otimes \alpha)$ by the chain of
isomorphisms:
\begin{align*}
   & \mathrel{\phantom{\cong}} (\kappa^\centerdot_k \circ F\beta \circ \kappa_h) \otimes (\kappa^\centerdot_g \circ F\alpha \circ
   \kappa_f) \\
   & \cong
   (\kappa^\centerdot_k \otimes \kappa^\centerdot_g) \circ (F\beta \otimes F\alpha) \circ (\kappa_h \otimes
   \kappa_f)\\
   & \cong
   (\kappa^\centerdot_k \otimes \kappa^\centerdot_g) \circ (\chi_{\abs g, \abs k}^\centerdot \circ F(\beta \otimes \alpha) \circ \chi_{\abs f, \abs h}) \circ (\kappa_h \otimes
   \kappa_f) \\
   & \cong
   (\kappa^\centerdot_k \otimes \kappa^\centerdot_g \circ \chi_{\abs g, \abs k}^\centerdot) \circ F(\beta \otimes \alpha) \circ (\chi_{\abs f, \abs h} \circ \kappa_h \otimes
   \kappa_f)\\
   & =
   \kappa^\centerdot_{k \otimes g} \circ F(\beta \otimes \alpha) \circ \kappa_{h \otimes
   f}
\end{align*}
(where from the second to the third line we apply
pseudonaturality of $\chi$); and the invertible $3$-cell $U_f
\colon 1_{\abs{Ff}} \Rrightarrow F_{f,f}(1_{\abs f})$ by the
chain of isomorphisms:
\begin{equation*}
   1_{\abs{Ff}}
    \xrightarrow{\cong} \kappa^\centerdot_f \circ \kappa_f
    \xrightarrow{\cong} \kappa^\centerdot_f \circ 1_{F\abs f} \circ \kappa_f
    \xrightarrow{\cong} \kappa^\centerdot_f \circ F(1_{\abs f}) \circ \kappa_f
   = F_{f,f}(1_{\abs f})\ \text.
\end{equation*}
It remains to give the invertible $3$-cells $L_f$, $R_f$ and
$A_{fgh}$. As these three cases are very similar, we give
details only for $L_f \colon l_{\abs{Ff}} \Rrightarrow F_{I_y
\otimes f, f}(l_\abs{f})$; which is obtained by the following
chain of isomorphisms:
\begin{align*}
   l_{\abs{Ff}}
   & \cong \kappa^\centerdot_f \circ l_{F\abs f} \circ 1 \otimes \kappa_f \\
   & \cong \kappa^\centerdot_f \circ (F(l_{\abs f}) \circ \chi_{\abs f, I_y} \circ \iota_y \otimes 1) \circ 1 \otimes
   \kappa_f \\
   & \cong \kappa^\centerdot_f \circ F(l_{\abs f}) \circ (\chi_{\abs f, I_y} \circ \iota_y \otimes \kappa_f) \\
   & = F_{I_y \otimes f, f}(l_\abs{f})\ \text,
\end{align*}
where for the first isomorphism we apply pseudonaturality of
$l$, and for the second we use the inverse of the coherence
$3$-cell
\begin{equation*}
\cd[@!C@C-0.5em]{ &
  FI_y \otimes F{\abs f}
    \ar[r]^{\chi_{{\abs f}, I_y}}
    \dtwocell{dr}{\gamma_{\abs f}} &
  F(I_y \otimes {\abs f})
    \ar[dr]^{F(l_{\abs f})} \\
  I_{Fy} \otimes F{\abs f}
    \ar[rrr]_{l_{F{\abs f}}}
    \ar[ur]^{\iota_y \otimes 1} & & &
  F{\abs f}\ \text.}
\end{equation*}
The fourteen coherence axioms for $F'$ now all follow from the
coherence theorem for biased
homomorphisms~\cite[Chapter~11]{Gurski2006algebraic}.
\end{proof}
\begin{Prop}\label{local}
The assignations of Propositions~\ref{lifting2}
and~\ref{lifting} induce an equivalence of categories
$\tcat{Tricat}_{\mathrm{ub}}(\T, \U) \simeq
\tcat{Tricat}_{\mathrm{b}}(\T, \U)$.
\end{Prop}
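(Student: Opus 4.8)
The plan is to realise the two object-assignments as mutually quasi-inverse functors between the hom-categories. Write $\Phi$ for the functor induced by Proposition~\ref{lifting2} (unbiased to biased) and $\Psi$ for that induced by Proposition~\ref{lifting} (biased to unbiased); the first task is to extend each to icons. Given an unbiased icon $\xi \colon F \Rightarrow G$, I would define its image $\Phi\xi$ to have $3$-cell component $(\Phi\xi)(\alpha) = \xi_{[f],[g]}(\alpha)$ at a $2$-cell $\alpha \colon f \Rightarrow g$, with the structural cells $M_x$ and $\Pi_{f,g}$ of Definition~\ref{icon2} taken to be $\xi_{I_x,[I_x]}(1_{I_x})$ and $\xi_{[g]\otimes[f],\,[g\otimes f]}(1_{g\otimes f})$; these are precisely the components of $\xi$ at the cells used to define $\iota$ and $\chi$ in Proposition~\ref{lifting2}. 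Dually, for a biased icon $\xi \colon F \Rightarrow G$ the image $(\Psi\xi)_{f,g}(\alpha)$ is obtained by conjugating $\xi(\alpha)$ with the comparison cells $\kappa_f$, $\kappa^\centerdot_g$ of Proposition~\ref{lifting}. In each direction the icon axioms for the image follow by a diagram chase from those for $\xi$ and the coherence data of the ambient homomorphisms, and functoriality---preservation of identity icons and of their vertical composition---is immediate from the defining formulae.

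With $\Phi$ and $\Psi$ available as functors, I would exhibit natural isomorphisms $\Phi\Psi \cong \id$ and $\Psi\Phi \cong \id$. The first is straightforward: since the comparison cells of Proposition~\ref{lifting} satisfy $\kappa_{[g]} = 1_{Fg}$, the homomorphism $\Phi\Psi F$ agrees with $F$ on $2$-\ and $3$-cells on the nose, while its constraints $\chi$ and $\iota$ differ from those of $F$ only through the invertible functoriality $3$-cells of $F$ at the identity $2$-cells $1_{g\otimes f}$ and $1_{I_x}$. Taking these functoriality cells as $\Pi_{f,g}$ and $M_x$, and identities as the $\xi(\alpha)$, yields an invertible biased icon $\Phi\Psi F \cong F$, natural in $F$. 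For the second, starting from an unbiased $F$ one has $(\Psi\Phi F)_{f,g}(\alpha) = (\kappa^{\Phi F}_g)^\centerdot \circ F_{[\abs f],[\abs g]}(\alpha) \circ \kappa^{\Phi F}_f$, in which the cells $\kappa^{\Phi F}$ are themselves assembled from the coherence data $V$, $H$, $U$, $L$, $R$, $A$ of $F$. I would therefore build, by structural induction over the formal composites $f$ and $g$, canonical invertible $3$-cells comparing this expression with $F_{f,g}(\alpha)$, each inductive step contracting one layer of $\kappa^{\Phi F}$ against the corresponding coherence cell of $F$.

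The main obstacle will be showing that this last family of comparison $3$-cells assembles into an unbiased \emph{icon} $\Psi\Phi F \cong F$: that is, that it is compatible with each of $V$, $H$, $U$, $L$, $R$ and $A$ in the sense of Definition~\ref{icon}. This is where the full strength of the fourteen unbiased-homomorphism axioms is needed, the verification running along the same induction that defines the comparison, with each case invoking one axiom together with the tricategory axioms of $\U$. Once both natural isomorphisms are in place, $\Phi$ and $\Psi$ are mutually quasi-inverse, and the desired equivalence $\tcat{Tricat}_{\mathrm{ub}}(\T,\U) \simeq \tcat{Tricat}_{\mathrm{b}}(\T,\U)$ follows immediately.
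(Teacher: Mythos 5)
Your overall strategy is exactly the paper's: extend both object-assignations to functors on icons, then exhibit the two natural isomorphisms, the biased-side one coming essentially from coherence constraints and the unbiased-side one from a structural induction over formal composites. Your definition of $\Phi$ on icons agrees with the paper's verbatim. There are, however, two points where the proposal as written does not quite typecheck, and both need the same missing ingredient.

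First, for $\Psi$ on icons: whiskering $\xi(\alpha)$ with $\kappa_f$ and $\kappa^\centerdot_g$ produces a $3$-cell with codomain $\kappa^{\centerdot\,F}_g \circ G(\alpha) \circ \kappa^F_f$, whereas the required codomain is $G'_{f,g}(\alpha) = \kappa^{\centerdot\,G}_g \circ G(\alpha) \circ \kappa^G_f$. The comparison cells $\kappa^F_f$ and $\kappa^G_f$ are \emph{not} equal: they are assembled from $\iota^F$, $\chi^F$ and from $\iota^G$, $\chi^G$ respectively, which agree only up to the icon data $M_x$ and $\Pi_{f,g}$. So before defining $(\Psi\xi)_{f,g}(\alpha)$ you must first construct, by recursion on the formal composite $f$, invertible $3$-cells $\phi_f \colon \kappa^F_f \Rrightarrow \kappa^G_f$ (taking $\phi_{[g]} = \id$, $\phi_{I_x} = M_x$, and $\phi_{h \otimes g} = \Pi_{\abs g, \abs h} \circ (\phi_h \otimes \phi_g)$), and then set $(\Psi\xi)_{f,g}(\alpha) = \phi^\centerdot_g \circ \xi(\alpha) \circ \phi_f$. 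Second, and more minor: $\Phi\Psi F$ does not agree with $F$ on $2$- and $3$-cells on the nose, since $(\Phi\Psi F)(\alpha) = 1_{Fg} \circ (F\alpha \circ 1_{Ff})$; the components $\epsilon_F(\alpha)$ of the comparison icon must be the unit constraints of the relevant hom-bicategory rather than identities. Neither point changes the architecture of the argument, but both must be repaired for the functors and the natural transformations to be well defined.
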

\begin{proof}
We begin by making the assignation of
Proposition~\ref{lifting2} into a functor. So suppose given an
unbiased icon $\xi \colon F \Rightarrow G$; we produce from it
a biased icon $\xi' \colon F' \Rightarrow G'$ as follows. We
take its basic data to be given by:
\begin{itemize}
\item $\xi'(\alpha \colon f \Rightarrow g) =
    \xi_{[f],[g]}(\alpha) \colon F'(\alpha) \Rrightarrow
    G'(\alpha)$;
\item $\Pi_{f,g} = \xi_{[g] \otimes [f],[g \otimes f]}(1_{g
    \otimes f}) \colon \chi^{F'}_{f,g} \Rrightarrow
    \chi^{G'}_{f,g}$; and
\item $M_x = \xi_{I_x,[I_x]}(1_{I_x}) \colon \iota^{F'}_x
    \Rrightarrow \iota^{G'}_x$.
\end{itemize}
The $3$-cells $\Pi_{f,g}$ and $M_x$ are invertible, with the
$3$-cell $\Pi_{f,g}^{-1}$ being given as the mate under
adjunction of the $3$-cell $\xi_{[g \otimes f],[g] \otimes
[f]}(1_{g \otimes f}) \colon \chi^{\centerdot\,F'}_{f,g}
\Rrightarrow \chi^{\centerdot\,G'}_{f,g}$, and $M_x^{-1}$ being
the mate under adjunction of $\xi_{[I_x],I_x}(1_{I_x}) \colon
\iota^{\centerdot\,F'}_x \Rrightarrow
\iota^{\centerdot\,G'}_x$; whilst the biased icon axioms for
$\xi'$ follow immediately from the unbiased icon axioms for
$\xi$. It is easy to see that the assignation $\xi \mapsto
\xi'$ is functorial, and so we obtain a functor $(\thg)' \colon
\tcat{Tricat}_{\mathrm{ub}}(\T, \U) \to
\tcat{Tricat}_{\mathrm{b}}(\T, \U)$.

We next make the assignation of Proposition~\ref{lifting} into
a functor. So given a biased icon $\xi \colon F \Rightarrow G$
we must produce an unbiased icon $\xi' \colon F' \Rightarrow
G'$. We first define, for every $f \colon x \dashrightarrow y$
in $\T$, invertible $3$-cells
\begin{equation*}
    \cd{
      \abs {Ff} \ar[r]^{\kappa^F_f} \ar@{=}[d] \dtwocell{dr}{\phi_f} &
      F\abs f \ar@{=}[d] \\
      \abs {Gf} \ar[r]_{\kappa^G_f} &
      G\abs f
    } \qquad \text{and} \qquad
    \cd{
      F\abs {f} \ar[r]^{\kappa^{\centerdot \, F}_f} \ar@{=}[d] \dtwocell{dr}{\phi^\centerdot_f} &
      \abs{Ff} \ar@{=}[d] \\
      G\abs {f} \ar[r]_{\kappa^{\centerdot \, G}_f} &
      \abs{Gf}
    } \ \text,
\end{equation*}
where $\kappa^F_f$, $\kappa^G_f$, $\kappa^{\centerdot \, F}_f$
and $\kappa^{\centerdot \, G}_f$ are defined as in the proof of
Proposition~\ref{lifting}. In fact, it suffices to give
$\phi_f$, since we may then obtain $\phi^\centerdot_f$ as the
mate under adjunction of $(\phi_f)^{-1}$. We define $\phi_f$ by
recursion on the form of $f$:
\begin{itemize}
\item If $f = [g]$ for some $g \colon x \to y$, then we
    take $\phi_f = \id \colon 1_{Fg} \Rrightarrow 1_{Gg}$;
\item If $f = I_x$ for some $x$, then we take $\phi_f = M_x
    \colon \iota^F_x \Rrightarrow \iota^G_x$; and
\item If $f = h \otimes g$ for some $g \colon x
    \dashrightarrow z$ and $h \colon z \dashrightarrow y$,
    then we take $\phi_f$ to be
\begin{equation*}
    \kappa^F_{h \otimes g} = \chi^F_{\abs g, \abs h} \circ (\kappa^F_h \otimes
    \kappa^F_g)
    \xrightarrow{\Pi_{\abs g, \abs h} \circ (\phi_h \otimes \phi_g)}
    \chi^G_{\abs g, \abs h} \circ (\kappa^G_h \otimes \kappa^G_g) = \kappa^G_{h
    \otimes g}\ \text.
\end{equation*}
\end{itemize}
Now for a biased icon $\xi \colon F \Rightarrow G$, the
corresponding unbiased icon $\xi' \colon F' \Rightarrow G'$ has
$3$-cell components $\xi'_{f, g}(\alpha)$ given by
\begin{equation*}
    F'_{f, g}(\alpha)
    = \kappa^{\centerdot\, F}_g \circ F(\alpha) \circ \kappa^F_f
    \xrightarrow{\phi^{\centerdot}_g \circ \xi(\alpha) \circ \phi_f}
    \kappa^{\centerdot\, G}_g \circ G(\alpha) \circ \kappa^G_f
    = G'_{f, g}(\alpha)\ \text.
\end{equation*}
The unbiased icon axioms for $\xi'$ follow by straightforward
diagram chasing. Moreover, it is easy to see that the
assignation $\xi \mapsto \xi'$ respects composition and so we
obtain a functor $(\thg)' \colon \tcat{Tricat}_{\mathrm{b}}(\T,
\U) \to \tcat{Tricat}_{\mathrm{ub}}(\T, \U)$.

It remains to show that the two functors just defined are
quasi-inverse to each other. Firstly, for each unbiased
homomorphism $F \colon \T \to \U$ we must provide an invertible
unbiased icon $\eta_F \colon F \Rightarrow F''$, naturally in
$F$. To this end we define, for each $f \colon x
\dashrightarrow y$ in $\T$, isomorphic $3$-cells
\begin{align*}
    \theta_f & \colon F_{f, [\,\abs f\,]}(1_{\abs f}) \Rrightarrow \kappa^{F'}_f
    \colon \abs{Ff} \Rightarrow F \abs f \\ \text{and} \quad
    \theta^\centerdot_f & \colon F_{[\,\abs f\,], f}(1_{\abs f}) \Rrightarrow \kappa^{\centerdot\,F'}_f
    \colon F\abs{f} \Rightarrow \abs{Ff}\text.
\end{align*}
We do this by recursion on the form of $f$. If $f = [g]$  then
we take
\begin{equation*}
\theta_f = \theta^\centerdot_f = U_{[g]}^{-1} \colon F_{[g],[g]}(1_g)
\Rrightarrow 1_{Fg}\ \text;
\end{equation*}
if $f = I_x$ for some $x$, then we may take both $\theta_f$ and
$\theta^\centerdot_f$ to be identity cells; and if $f = h
\otimes g$ for some $g \colon x \dashrightarrow z$ and $h
\colon z \dashrightarrow y$, then we take $\theta_f$ to be
given by the composite
\begin{align*}
    F_{h \otimes g, [\,\abs{h \otimes g}\,]}(1)
    & \cong F_{h \otimes g, [\,\abs{h} \otimes \abs {g}\,]}(1 \circ 1)\\
    & \cong F_{[\,\abs{h}\,] \otimes [\,\abs {g}\,], [\,\abs{h} \otimes \abs {g}\,]}(1) \circ F_{h \otimes g, [\,\abs h\,] \otimes [\,\abs
    g\,]}(1)\\
    & \cong F_{[\,\abs{h}\,] \otimes [\,\abs {g}\,], [\,\abs{h} \otimes \abs {g}\,]}(1) \circ F_{h, [\,\abs h\,]}(1) \otimes F_{g, [\,\abs g\,]}(1)\\
    & \cong \chi_{\abs f, \abs g}^{F'} \circ \kappa^{F'}_h \otimes \kappa^{F'}_g \\
    & = \kappa^{F'}_{h \otimes g}\ \text;
\end{align*}
and give $\theta^\centerdot_f$ dually. We now define the
unbiased icon $\eta_F \colon F \Rightarrow F''$ to have
components $(\eta_F)_{f,g}(\alpha)$ given by
\begin{equation*}
  \cd{
    F_{f,g}(\alpha)
      \ar[d]^{F_{f,g}(\cong)} \\
    F_{f,g}(1_{\abs g} \circ (\alpha \circ 1_{\abs f}))
      \ar[d]^{(1 \circ U^{-1}).U^{-1}} \\
    F_{[\,\abs g\, ],g}(1_{\abs g}) \circ (F_{[\,\abs f\,],[\,\abs g\,]}(\alpha) \circ F_{f,[\,\abs f\, ]}(1_{\abs f}))
      \ar[d]^{\theta^\centerdot_g \circ (\id \circ \theta_f)} \\
    \kappa^{\centerdot\,F'}_{g} \circ (F'(\alpha) \circ
    \kappa^{F'}_f)
      \ar[d]^{=} \\
    F''_{f,g}(\alpha) \ \text.}
\end{equation*}
With some effort we may check the icon axioms for $\eta_F$;
whilst the naturality of $\eta_F$ in $F$ is almost immediate.
To conclude the proof, we must provide for each biased
homomorphism $F \colon \T \to \U$ an invertible biased icon
$\epsilon_F \colon F'' \Rightarrow F$, naturally in $F$. Given
such an $F$, it is clear that $F''$ agrees with it on $0$-\ and
$1$-cells; whilst on $2$-cell data we have that:
\begin{align*}
    F''(\alpha \colon f \Rightarrow g) &= F'_{[f],[g]}(\alpha) = 1_{Fg} \circ (F\alpha \circ
    1_{Ff})\text; \\
    \chi^{F''}_{f,g} &= F'_{[g] \otimes [f], [g \otimes
    f]}(1_{g \otimes f}) = 1_{Fg} \otimes 1_{Ff} \circ \chi^F_{f,g} \circ F(1_{g \otimes f}) \circ
    1_{F(g \otimes f)}\text{;} \\
    \text{and }\ \  \iota^{F''}_{x} &= F'_{I_x, [I_x]}(1_{I_x}) = \iota^F_x \circ 1_{I_x} \circ
    1_{I_x}\text.
\end{align*}
Thus we may take each of $\epsilon_F(\alpha) \colon F''(\alpha)
\Rrightarrow F(\alpha)$, $\Pi_{f,g} \colon \chi^{F''}_{f,g}
\Rrightarrow \chi^{F}_{f,g}$ and $M_x \colon \iota^{F''}_x
\Rrightarrow \iota^F_x $ to be given by the appropriate
bicategorical coherence constraint. The icon axioms for
$\epsilon_F$ follow from coherence for biased trihomomorphisms;
whilst naturality of $\epsilon_F$ in $F$ is again almost
immediate.
\end{proof}

\begin{Thm}
The bicategories $\tcat{Tricat}_{\mathrm{ub}}$ and
$\tcat{Tricat}_{\mathrm{b}}$ are biequivalent.
\end{Thm}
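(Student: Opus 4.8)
The plan is to exhibit an identity-on-objects pseudofunctor $P \colon \tcat{Tricat}_{\mathrm{ub}} \to \tcat{Tricat}_{\mathrm{b}}$ whose action on each hom-category is the functor $(\thg)'$ of Proposition~\ref{lifting2}, and then to invoke the standard characterisation of biequivalences: a pseudofunctor of bicategories is a biequivalence precisely when it is biessentially surjective on objects and a local equivalence, in the sense that each of its functors on hom-categories is an equivalence. Since $P$ is to be the identity on objects it will be trivially biessentially surjective; and since each hom-functor $(\thg)'$ is an equivalence by Proposition~\ref{local}, it will be a local equivalence. Thus the only real content is to endow $P$ with its pseudofunctor structure.

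For this, I would first take the object assignment to be the identity and the hom-functors to be the equivalences $(\thg)' \colon \tcat{Tricat}_{\mathrm{ub}}(\T, \U) \to \tcat{Tricat}_{\mathrm{b}}(\T, \U)$. It then remains to supply invertible unitor icons $\id_{\T} \Rightarrow (\id_{\T})'$ and compositor icons $G' \circ F' \Rightarrow (G \circ F)'$, for unbiased homomorphisms $F \colon \T \to \U$ and $G \colon \U \to \V$, natural in $F$ and $G$ and satisfying the pseudofunctor coherence axioms. The unitor is essentially trivial, since the construction of Proposition~\ref{lifting2} carries the strict identity homomorphism to a biased homomorphism whose structure cells are all coherence isomorphisms. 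The compositor is obtained by comparing the two ways of producing a biased homomorphism from the data of $F$ and $G$: on the one hand by forming the strict composite $G \circ F$ of unbiased homomorphisms and then applying $(\thg)'$, and on the other by applying $(\thg)'$ to each factor and composing the results in $\tcat{Tricat}_{\mathrm{b}}$. Both assignments agree on $0$-\ and $1$-cells, so the comparison is a genuine icon, and its component $3$-cells are assembled from the constraint cells $V$, $H$, $U$, $L$, $R$ and $A$ exactly as in the constructions of Propositions~\ref{lifting2} and~\ref{local}.

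With this data in hand, the pseudofunctor coherence axioms --- naturality of the constraints, together with the associativity and unit conditions relating the compositors to the associators and unitors of $\tcat{Tricat}_{\mathrm{b}}$ --- reduce to diagram chases that follow from the tricategory axioms for the codomains and from the coherence theorem for biased trihomomorphisms of~\cite[Chapter~11]{Gurski2006algebraic}, in the same spirit as the verifications already carried out in Propositions~\ref{lifting2}, \ref{lifting} and~\ref{local}. Once $P$ is known to be a pseudofunctor, the theorem follows at once from the characterisation recalled above.

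The main obstacle is precisely the construction of the compositor icons and the verification of pseudofunctor coherence: although the genuinely hard content --- that each $(\thg)'$ is an equivalence of categories --- has already been discharged in Proposition~\ref{local}, one must still check that these local equivalences are compatible with composition up to coherent isomorphism, and it is here that the bulk of the routine but lengthy coherence bookkeeping lies. I expect no conceptual difficulty, since the unbiased composition is strictly associative and unital, so that the compositor constraints are essentially forced and their coherence is governed entirely by the bicategory structure of the target $\tcat{Tricat}_{\mathrm{b}}$.
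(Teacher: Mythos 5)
Your proposal follows essentially the same route as the paper: both make the identity-on-objects assignment with hom-functors $(\thg)'$ into a homomorphism of bicategories by supplying unit and composition constraint icons built from the coherence cells $V$ and the $G(\cong)$'s, verify the coherence axioms by diagram chasing, and conclude via the characterisation of biequivalences as biessentially surjective local equivalences, with Proposition~\ref{local} doing the real work. The only small divergence is that the paper observes $(\thg)'$ preserves identities \emph{strictly}, so the unit constraint can be taken to be an identity icon rather than merely an invertible one.
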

\begin{proof}
We will show the functors $(\thg)' \colon
\tcat{Tricat}_{\mathrm{ub}}(\T, \U) \to
\tcat{Tricat}_{\mathrm{b}}(\T, \U)$ to provide the local
structure of an identity-on-objects homomorphism of
bicategories $\tcat{Tricat}_{\mathrm{ub}} \to
\tcat{Tricat}_{\mathrm b}$. The result then follows by
observing this homomorphism to be biessentially surjective on
objects (trivially) and locally an equivalence (by
Proposition~\ref{local}); and so a biequivalence. The only data
we lack for the homomorphism $\tcat{Tricat}_{\mathrm{ub}} \to
\tcat{Tricat}_{\mathrm b}$ are its functoriality constraint
$2$-cells. So we must provide for each tricategory $\T$, a
biased icon $e_\T \colon 1_\T \Rightarrow (1_\T)' \colon \T \to
\T$; and for each pair of unbiased homomorphisms $F \colon \T
\to \U$ and $G \colon \U \to \V$, a biased icon $m_{F,G} \colon
G' \circ F' \Rightarrow (G \circ F)' \colon \T \to \V$. For the
former, it is not hard to check that $(\thg)'$ in fact
preserves identities strictly, so that we may take $e_\T$ to be
an identity icon. For the latter, we observe that $G' \circ F'$
and $(G \circ F)'$ agree on $0$-\ and $1$-cells as required;
whilst on $2$-cells, their respective data is given as follows.
For $\alpha \colon f \Rightarrow g$ in $\T$, we have
\begin{align*}
    (G' \circ F')(\alpha) &= G'(F'(\alpha)) =
    G_{[Ff],[Fg]}(F_{[f],[g]}(\alpha)) \\
    \text{and} \ \ \ (G \circ F)'(\alpha) &= (G \circ F)_{[f],[g]}(\alpha) =
    G_{[Ff],[Fg]}(F_{[f],[g]}(\alpha))\text;
\end{align*}
so that we may take $m_{F,G}(\alpha)$ to be an identity
$3$-cell. Next, for $x \in \T$ we have
\begin{align*}
    \iota^{G' \circ F'}_{x} &= G'(\iota^{F'}_x) \circ \iota^{G'}_{F'x} =
    G_{[I_{Fx}], [FI_x]}(F_{I_x, [I_x]}(1_{I_x})) \circ G_{I_{Fx}, [I_{Fx}]}(1_{I_{Fx}})\\
    \text{and} \ \ \ \iota^{(G \circ F)'}_{x} &= (G \circ F)_{I_x, [I_x]}(1_{I_x}) = G_{I_{Fx}, [FI_x]}(1_{I_x})\text;
\end{align*}
so that we may take $M_x \colon \iota^{G' \circ F'}_{x}
\Rrightarrow \iota^{(G \circ F)'}_{x}$ to be the $3$-cell
\begin{equation*}
    \cd{
      G_{[I_{Fx}], [FI_x]}(F_{I_x, [I_x]}(1_{I_x})) \circ G_{I_{Fx}, [I_{Fx}]}(1_{I_{Fx}})
      \ar[d]^{V} \\
      G_{I_{Fx}, [FI_x]}(F_{I_x, [I_x]}(1_{I_x}) \circ 1_{I_{Fx}})
      \ar[d]^{G(\cong)} \\
      G_{I_{Fx}, [FI_x]}(F_{I_x, [I_x]}(1_{I_x}))\text.
    }
\end{equation*}
Finally, for $f \colon x \to y$ and $g \colon y \to z$ in $\T$,
we have that
\begin{align*}
    \chi^{G' \circ F'}_{f,g} &= G'(\chi^{F'}_{f,g}) \circ \chi^{G'}_{F'f,F'g} \\ & =
    G_{[Fg \otimes Ff], [F(g \otimes f)]}(F_{[g] \otimes [f],[g \otimes f]}(1_{g \otimes
    f})) \circ G_{[Fg] \otimes [Ff], [Fg \otimes Ff]}(1_{Fg \otimes Ff})\\
    \text{and} \ \ \ \chi^{(G \circ F)'}_{f,g} &= (G \circ  F)_{[g] \otimes [f],[g \otimes
    f]}(1_{g \otimes f}) \\ & = G_{[Fg] \otimes [Ff], [F(g \otimes f)]}(F_{[g] \otimes [f],[g \otimes
    f]}(1_{g \otimes f}))\text;
\end{align*}
so that we may take $\Pi_{f,g} \colon \chi^{G' \circ F'}_{f,g}
\Rrightarrow \chi^{(G \circ F)'}_{f,g}$ to be the $3$-cell
\begin{equation*}
    \cd{
      G_{[Fg \otimes Ff], [F(g \otimes f)]}(F_{[g] \otimes [f],[g \otimes f]}(1_{g \otimes
    f})) \circ G_{[Fg] \otimes [Ff], [Fg \otimes Ff]}(1_{Fg \otimes Ff})
      \ar[d]^{V} \\
      G_{[Fg] \otimes [Ff], [F(g \otimes f)]}(F_{[g] \otimes [f],[g \otimes f]}(1_{g \otimes
    f}) \circ 1_{Fg \otimes Ff})
      \ar[d]^{G(\cong)} \\
      G_{[Fg] \otimes [Ff], [F(g \otimes f)]}(F_{[g] \otimes [f],[g \otimes f]}(1_{g \otimes
    f}))\text.
  }
\end{equation*}
Finally, by straightforward diagram chasing we can verify in
succession: the icon axioms for $m_{F,G}$; naturality of
$m_{F,G}$ in $F$ and $G$; and the pentagon and triangle axioms
for $e_\T$ and $m_{F,G}$. This completes the definition of the
homomorphism $\tcat{Tricat}_{\mathrm{ub}} \to
\tcat{Tricat}_{\mathrm b}$ and hence the proof.
\end{proof}

\section{Homomorphisms of weak $\omega$-categories}\label{omcat}
We now turn to our second application of the techniques
described in Section~\ref{basicdef}, for which we shall develop
a notion of homomorphism between the weak $\omega$-categories
of Michael Batanin. These weak $\omega$-categories are defined
as algebras for suitable finitary monads on the category of
globular sets; and as such, the naturally-arising morphisms
between them are those which preserve all of the
$\omega$-categorical operations on the nose. Whilst
in~\cite[Definition 8.8]{Batanin1998Monoidal}, Batanin suggests
a way of weakening these maps to obtain a notion of
homomorphism, it is not made clear how the homomorphisms he
describes should be composed, or even that they may be composed
at all. The description that we shall now give of a
\emph{category} of homomorphisms between weak
$\omega$-categories is therefore a useful contribution towards
the goal of describing the totality of structure formed by
(algebraic) weak $\omega$-categories and the weak higher cells
between them.

We begin by briefly recalling Batanin's definition of weak
$\omega$-category: see~\cite{Batanin1998Monoidal}
or~\cite{Leinster2004Operads} for the details,
or~\cite{Batanin2008Algebras} for a more modern treatment. As
stated above, weak $\omega$-categories in this sense are
algebras for certain finitary monads on the category of
globular sets, where a \emph{globular set} is a presheaf over
the category $\cat G$ generated by the graph
\begin{equation*}
    \cd{
    0 \ar@<3pt>[r]^-{\sigma} \ar@<-3pt>[r]_-{\tau} & 1 \ar@<3pt>[r]^-{\sigma} \ar@<-3pt>[r]_-{\tau} & 2 \ar@<3pt>[r]^-{\sigma} \ar@<-3pt>[r]_-{\tau} & 3\ar@<3pt>[r]^-{\sigma} \ar@<-3pt>[r]_-{\tau} & \,\dots} \ \text,
\end{equation*}
subject to the equations $\sigma \sigma = \tau \sigma$ and
$\sigma \tau = \tau \tau$, and where the finitary monads in
question are the contractible globular operads
of~\cite{Batanin1998Monoidal}. A \emph{globular operad} is a
monad $P$ on $[\cat G^\op, \cat{Set}]$ equipped with a
cartesian monad morphism $\kappa \colon P \to T$, where $T$ is
the monad for strict $\omega$-categories, and where to call
$\kappa$ \emph{cartesian} is to assert that all of its
naturality squares are pullbacks. By Lemma~6.8 and
Proposition~6.11 of~\cite{Batanin2008Algebras}, any given monad
$P$ admits at most one such augmentation $\kappa$, so that for
a monad on $[\cat G^\op, \cat{Set}]$ to be a globular operad is
a property, not extra structure.\footnote{Note that this is by
contrast with the situation for plain operads, as noted
in~\cite{Leinster2006Are}.}

Since the identity monad on $[\cat G^\op, \cat{Set}]$ is a
globular operad, it is clear that not every globular operad
embodies a sensible theory of weak \mbox{$\omega$-categories}.
Those which do are characterised by~\mbox{\cite[Definition
8.1]{Batanin1998Monoidal}} in terms of a property
of~\emph{contractibility}. We will not recall the definition
here, because we will not need to: our development makes sense
for an arbitrary globular operad, and it will be convenient to
work at this level of generality. Thus, for the remainder of
this section, we let $P$ be a fixed globular operad.
\begin{Defn}
We write $\omega\text-\cat{Cat}_{\mathrm s}$ for the category
of $P$-algebras and $P$-algebra morphisms, refer to its objects
as \emph{weak $\omega$-categories}, and to its morphisms as
\emph{strict homomorphisms}.
\end{Defn}
The monad $T$ for strict $\omega$-categories is finitarily
monadic (see~\cite{Leinster2004Operads}), and this together
with the existence of a cartesian $\kappa \colon P \to T$
implies that $P$ is also finitary. Hence
$\omega\text-\cat{Cat}_{\mathrm s}$ is a locally finitely
presentable category, and so in order to apply the machinery of
Section~\ref{basicdef}, it remains only to distinguish in
$\omega\text-\cat{Cat}_{\mathrm s}$ a set of maps describing
the basic $n$-cells together with the inclusions of their
boundaries. In what follows we write
\begin{equation*} \cd[@C+1em]{
    \omega\text-\cat{Cat}_{\mathrm s} \ar@<5pt>@{<-}[r]^-{K} \ar@{}[r]|-{\bot} &
    [\cat{G}^\op, \cat{Set}] \ar@<5pt>@{<-}[l]^-{V}
}
\end{equation*}
for the free/forgetful adjunction induced by $P$.
\begin{Defn}\label{gencofibsom}
The \emph{generating cofibrations} $\{\iota_n \colon \partial_n
\to 2_n\}_{n \in \mathbb N}$ of $\omega\text-\cat{Cat}_\mathrm
s$ are the images under $K$ of the set of morphisms $\{f_n\}_{n
\in \mathbb N}$ of $[\cat G^\op, \cat{Set}]$ defined as follows
(where we write $y$ for the Yoneda embedding $\cat G \to [\cat
G^\op, \cat{Set}]$):
\begin{itemize}
\item $f_0$ is the unique map $0 \to y_0$;
\item $f_1$ is the map $[y_\sigma, y_\tau] \colon y_0 + y_0
    \to y_1$;
\item $f_n$ (for $n \geqslant 2$) is the map induced by the
    universal property of pushout in the following diagram:
\begin{equation*}
    \cd[@C+1em]{
        y_{n-2} + y_{n-2} \ar[r]^-{[y_\sigma,y_\tau]} \ar[d]_{[y_\sigma,y_\tau]} &
        y_{n-1} \ar[d] \ar[ddr]^{y_\tau}\\
        y_{n-1} \ar[r] \ar[drr]_{y_\sigma} &
        \star \pullbackcorner \ar@{.>}[dr]|{f_{n}} \\
        & & y_{n}
    }
\end{equation*}
\end{itemize}
\end{Defn}
\begin{Defn}\label{omcatdef}
We define $\mathsf Q \colon \omega\text-\cat{Cat}_\mathrm s \to
\omega\text-\cat{Cat}_\mathrm s$ to be the universal cofibrant
replacement comonad for the generating cofibrations of
Definition~\ref{gencofibsom}, and define the category
$\omega\text-\cat{Cat}_\mathrm s$ of weak $\omega$-categories
and $\omega$-homomorphisms to be the co-Kleisli category of
this comonad.
\end{Defn}

We shall now give an explicit description of the comonad
$\mathsf Q$ in terms of \emph{computads}. Computads were
introduced in~\cite{Street1976Limits} as a tool for presenting
free higher-dimensional categories. In the context of strict
$\omega$-categories they have been studied extensively under
the name of \emph{polygraph}:
see~\cite{Burroni1993Higher-dimensional,M'etayer2008Cofibrant}.
For the weak $\omega$-categories under consideration here, the
appropriate notion of computad is due to
Batanin~\cite{Batanin1998Computads}. In the definition, we make
use of the functors
\begin{align*}
    B_n & \defeq \omega\text-\cat{Cat}_{\mathrm s}(\partial_n, \thg) \colon \omega\text-\cat{Cat}_{\mathrm s} \to
    \cat{Set}\\
    \text{and} \quad E_n & \defeq \omega\text-\cat{Cat}_{\mathrm s}(2_n, \thg) \colon \omega\text-\cat{Cat}_{\mathrm s} \to \cat{Set}
\end{align*}
and the natural transformation $\rho_n \defeq
\omega\text-\cat{Cat}_{\mathrm s}(\iota_n, \thg) \colon E_n
\Rightarrow B_n$.

\begin{Defn}
For each integer $n \geqslant -1$, we define the category
$n\text-\cat{Cptd}$ of $n$-\emph{computads}, together with a
free/forgetful adjunction
\begin{equation*}
\cd[@C+1em]{
    \omega\text-\cat{Cat}_{\mathrm s} \ar@<5pt>@{<-}[r]^-{F_n} \ar@{}[r]|-{\bot} &
    n\text-\cat{Cptd} \ar@<5pt>@{<-}[l]^-{U_n}
}\ \text,
\end{equation*}
by induction on $n$. For the base case $n = -1$, we define
$(-1)$-$\cat{Cptd}$ to be the terminal category, $U_{-1}$ to be
the unique functor into it, and $F_{-1}$ to be the functor
picking out the initial weak $\omega$-category. For the
inductive step, given $n \geqslant 0$ we define an $n$-computad
to be given by an $(n-1)$-computad $C$, a set $X$, and a
function
\begin{equation*}
    x \colon X \to B_nF_{n-1}C\text.
\end{equation*}
A morphism of $n$-computads $(C, X, x) \to (C', X', x')$ is
given by a morphism $f \colon C \to C'$ of $(n-1)$-computads
and a map of sets $g \colon X \to X'$ making the diagram
\begin{equation*}
  \cd[@C+2em]{
    X \ar[r]^{g} \ar[d]_x & X' \ar[d]^{x'} \\
    B_nF_{n-1}C \ar[r]_-{B_nF_{n-1}f} &
     B_nF_{n-1}C'\text.
    }
\end{equation*}
commute. In other words, the category $n$-$\cat{Cptd}$ is just
the comma category $\cat{Set} \downarrow B_nF_{n-1}$. The
functor $U_n \colon \omega\text-\cat{Cat}_{\mathrm s} \to
n\text-\cat{Cptd}$ sends $\A$ to the triple $(U_{n-1}\A,
X_{(\A,n)}, x_{(\A,n)})$ where $X_{(\A,n)}$ and $x_{(\A,n)}$
are obtained from a pullback diagram
\begin{equation}\label{unpullback}
    \cd[@C+2em]{
      X_{(\A,n)} \ar[r]^{u_{(\A,n)}} \ar[d]_{x_{(\A,n)}} &
      E_n\A \ar[d]^{(\rho_n)_\A} \\
      B_n F_{n-1} U_{n-1} \A \ar[r]_-{B_n \epsilon_{n-1} \A} & B_n \A\rlap{ ;}
    }
\end{equation}
here $\epsilon_{n-1}$ denotes the counit of the adjunction
$F_{n-1} \dashv U_{n-1}$. To complete the definition, we must
exhibit a left adjoint $F_n$ for $U_n$. The value of this at an
$n$-computad $D = (C, X, x)$ is obtained by taking the
following pushout in $\omega$-$\cat{Cat}_\mathrm s$:
\begin{equation}\label{obtainalpha}
\cd{
    X \cdot \partial_n \ar[r]^{\overline x} \ar[d]_{X \cdot \iota_n} & F_{n-1} C \ar[d]^-{\psi_D} \\
    X \cdot 2_n \ar[r]_-{\phi_D} & F_n D\rlap{ ,}
    }
\end{equation}
where the map $\overline x$ is the transpose of $x \colon X \to
B_nF_{n-1} C$ under the adjunction $(\thg) \cdot \partial_n
\dashv B_n \colon \omega\text-\cat{Cat}_{\mathrm s} \to
\cat{Set}$.
%\begin{equation*}
%\cd[@C+1em]{
%    \omega\text-\cat{Cat}_{\mathrm s} \ar@<5pt>@{<-}[r]^-{(\thg) \cdot \partial_n} \ar@{}[r]|-{\bot} &
%    \cat{Set} \ar@<5pt>@{<-}[l]^-{B_n}\ \text.
%}
%\end{equation*}
The adjointness $F_n \dashv U_n$ follows by direct calculation.

For each natural number $n$, we have a functor $W_n \colon
(n+1)$-$\cat{Cptd} \to n\text-\cat{Cptd}$, sending $(C, X, x)$
to $C$; and the category $\omega$-$\cat{Cptd}$ of
\emph{$\omega$-computads} is defined to be the limit of the
diagram
\begin{equation*}
    \cdots \xrightarrow{W_1} 0\text-\cat{Cptd} \xrightarrow{W_0} (-1)\text-\cat{Cptd} \ \text.
\end{equation*}
For each $n \in \mathbb N$ we have $W_n U_n = U_{n-1}$, so that
the $U_n$'s form a cone over this diagram; and we write $U
\colon \omega\text-\cat{Cat}_\mathrm s \to
\omega\text-\cat{Cptd}$ for the induced comparison functor. It
now follows by a straightforward calculation that $U$ has a
left adjoint $F$, whose value at an object $(C_n)$ of
$\omega\text-\cat{Cptd}$ is given by the colimit of the diagram
\begin{equation*}
    F_{-1} C_{-1} \xrightarrow{\psi_{C_0}} F_0 C_0 \xrightarrow{\psi_{C_1}} F_1 C_1 \to \cdots
\end{equation*}
where the maps $\psi_{C_i}$ are given as
in~\eqref{obtainalpha}.
\end{Defn}
We now wish to show that the comonad $FU$ generated by the
adjunction \mbox{$F \dashv U \colon
\omega\text-\cat{Cat}_{\mathrm s} \to \omega\text-\cat{Cptd}$}
%\begin{equation*}
%    F_n C_n = F_n W_n C_{n+1} \xrightarrow{\theta_n C_{n+1}}
%    F_{n+1} C_{n+1}
%\end{equation*}
%in which $\theta_n$ is the natural transformation obtained as
%the mate under adjunction of $\id_{U_n} \colon  W_n U_{n+1}
%\Rightarrow U_n$.
%\begin{equation*}
%\cd[@C+1em]{
%    \omega\text-\cat{Cat}_{\mathrm s} \ar@<5pt>@{<-}[r]^-{F} \ar@{}[r]|-{\bot} &
%    \omega\text-\cat{Cptd} \ar@<5pt>@{<-}[l]^-{U}\rlap{ ,}
%} \end{equation*}
is isomorphic to the universal cofibrant replacement comonad
$\mathsf Q$. In order to do this, we will first need some
auxiliary definitions and results. Given a natural number $n$,
we define a morphism of globular sets $f \colon X \to Y$ to be
\emph{$n$-bijective} if $f_0, \dots, f_n$ are invertible, and
\emph{$n$-fully faithful} if the square
\begin{equation*}
  \cd[@C+3em]{
    X_{i+1} \ar[r]^{f_{i+1}} \ar[d]_{(s, t)} &
    Y_{i+1} \ar[d]^{(s, t)} \\
    X_i \times_{X_{i-1}} X_i \ar[r]_{f_i \times_{f_{i-1}} f_i} & Y_i \times_{Y_{i-1}} Y_i\rlap{ .}
  }
\end{equation*}
is a pullback for all $i \geqslant n$. We extend this notation
by declaring every morphism of $[\cat G^\op, \cat{Set}]$ to be
$(-1)$-bijective, and only the isomorphisms to be $(-1)$-fully
faithful.
\begin{Prop}
For each integer $n \geqslant -1$, there is an orthogonal
factorisation system on $\omega$-$\cat{Cat}_\mathrm s$ whose
left and right classes comprise those maps $f$ such that $Vf$
is $n$-bijective, respectively $n$-fully faithful.
\end{Prop}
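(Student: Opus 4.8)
The plan is to construct the factorisation system first on the presheaf category $[\cat{G}^\op, \cat{Set}]$ of globular sets---where the left and right classes $\mathcal{E}$ and $\mathcal{M}$ are the $n$-bijective and the $n$-fully faithful maps respectively---and then to transport it along the monadic forgetful functor $V$, taking as classes on $\omega\text-\cat{Cat}_{\mathrm s}$ the preimages $V^{-1}\mathcal{E}$ and $V^{-1}\mathcal{M}$. To factor a map $g \colon X \to Y$ of globular sets I would build the middle object $Z$ by induction on dimension: for $i \leqslant n$ put $Z_i \defeq X_i$ with $e_i \defeq \id$ and $m_i \defeq g_i$, and for $i \geqslant n$ put
\[
    Z_{i+1} \defeq Y_{i+1} \times_{Y_i \times_{Y_{i-1}} Y_i} (Z_i \times_{Z_{i-1}} Z_i)\text{,}
\]
letting the source-and-target map $(s,t) \colon Z_{i+1} \to Z_i \times_{Z_{i-1}} Z_i$ and the component $m_{i+1} \colon Z_{i+1} \to Y_{i+1}$ be the two pullback projections, and $e_{i+1}$ be induced by $g_{i+1}$ together with $(e_i s, e_i t)$; these cohere over $Y_i \times_{Y_{i-1}} Y_i$ since $m_i e_i = g_i$ and $g$ preserves sources and targets. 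By construction $e$ is $n$-bijective, $m$ is $n$-fully faithful, and $m e = g$.

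For orthogonality on globular sets, given a square with an $n$-bijective map on the left and an $n$-fully faithful map on the right, a diagonal filler $d$ is both forced and constructed dimensionwise: in dimensions $\leqslant n$ it is obtained by inverting the bijective components of the left-hand map, while in each dimension $i + 1 \geqslant n+1$ the fully faithfulness of the right-hand map exhibits the relevant object as a pullback into which the already-constructed lower data and the bottom edge of the square map compatibly, determining $d_{i+1}$ uniquely. This produces the unique filler, so $(\mathcal{E}, \mathcal{M})$ is an orthogonal factorisation system on $[\cat{G}^\op, \cat{Set}]$; the degenerate case $n = -1$ recovers the system (all maps, isomorphisms), in accordance with the stated conventions.

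To lift this along $V$, the crucial observation is that $P$ preserves $n$-bijective maps. Indeed, since $\kappa \colon P \to T$ is cartesian, the naturality square at $X \to 1$ gives a natural isomorphism $(PX)_k \cong (P1)_k \times_{(T1)_k} (TX)_k$; as a $k$-dimensional cell of $TX$ involves only cells of $X$ of dimension at most $k$, the set $(PX)_k$ depends only on $X_0, \dots, X_k$, so a map bijective in all dimensions $\leqslant n$ is sent by $P$ to one with the same property. Granting this, I would lift the factorisation as follows. Given a strict homomorphism $h \colon \A \to \B$ with algebra structures $a$ and $b$, factor $Vh$ as $V\A \xrightarrow{e} Z \xrightarrow{m} V\B$ in $[\cat{G}^\op, \cat{Set}]$. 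The square with left edge $Pe$, right edge $m$, top edge $e \cdot a$ and bottom edge $b \cdot Pm$ commutes because $h$ is an algebra map; and since $Pe \in \mathcal{E}$ while $m \in \mathcal{M}$, orthogonality supplies a unique $c \colon PZ \to Z$ with $c \cdot Pe = e \cdot a$ and $m \cdot c = b \cdot Pm$. These two identities say precisely that $e$ and $m$ are algebra maps once $c$ is a structure map; and the unit and associativity laws for $c$ follow by checking that the two sides of each are diagonal fillers of one common square---whose left edge is $e$, respectively $PPe$, each lying in $\mathcal{E}$---and invoking uniqueness.

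Orthogonality on $\omega\text-\cat{Cat}_{\mathrm s}$ then follows the same template: given a lifting problem between $e \in V^{-1}\mathcal{E}$ and $m \in V^{-1}\mathcal{M}$, apply $V$, take the unique globular filler $d$, and observe that $d \cdot b$ and $x \cdot Pd$---where $b$ and $x$ are the structure maps of the domain of the bottom edge and of the codomain of the top edge---are both diagonal fillers of the square with left edge $Pe \in \mathcal{E}$ and right edge $m$, hence equal; this says $d$ is an algebra map, and it is the unique such since $V$ is faithful. As $V^{-1}\mathcal{E}$ and $V^{-1}\mathcal{M}$ plainly contain the isomorphisms and are closed under composition, the verification is complete. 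The main obstacle is exactly this lifting step: it is the preservation of $n$-bijective maps by $P$ that keeps $Pe$ (and $PPe$) in the left class and so makes each diagonal-fill-in argument run; the remaining algebra-law and orthogonality checks are then formal consequences of uniqueness of fillers.
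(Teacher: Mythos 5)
Your proof is correct and follows essentially the same route as the paper's: establish the ($n$-bijective, $n$-fully faithful) orthogonal factorisation system on $[\cat{G}^\op,\cat{Set}]$, show that $P$ preserves $n$-bijectives via the cartesianness of $\kappa \colon P \to T$ together with direct inspection of $T$, and then lift the factorisation and the unique diagonal fillers to $P$-algebras by the standard uniqueness-of-fillers argument. The only difference is one of detail: you spell out the dimensionwise pullback construction of the middle object and the algebra-law verifications that the paper leaves as ``easy to show''.
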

\begin{proof}
The case $n = -1$ is trivial; so assume $n \geqslant 0$. It's
easy to show that the $n$-bijective and $n$-fully faithful maps
form an orthogonal factorisation system on $[\cat{G}^\op,
\cat{Set}]$; what we must show is that this lifts to
$\omega$-$\cat{Cat}_\mathrm s$. Since this latter is the
category of algebras for the monad $P$ on $[\cat{G}^\op,
\cat{Set}]$, it suffices for this to show that the functor $P$
preserves $n$-bijective morphisms. Indeed, if this is the case,
then we may factorise a $P$-algebra map $f \colon (X, x) \to
(Y, y)$ as follows. First we let
\begin{equation*}
    f = X \xrightarrow{g} Z \xrightarrow{h} Y
\end{equation*}
be the ($n$-bijective, $n$-fully faithful) factorisation of
$f$. Now consider the square
\begin{equation*}
   \cd{
    PX \ar[d]_{Pg} \ar[r]^-{g.x} & Z \ar[d]^h \\
    PZ \ar[r]_{y.Ph} & Y\rlap{ .}
   }
\end{equation*}
It is certainly commutative; and since $Pg$ is $n$-bijective
and $h$ is $n$-fully faithful, we induce a unique morphism $z
\colon PZ \to Z$ making both squares commute. It's now easy to
verify using the uniqueness of diagonal fillers, that this
makes $Z$ into a $P$-algebra, and $g$ and $h$ into $P$-algebra
maps. Thus we have verified the factorisation property; and the
lifting property may be verified similarly.

Thus to complete the proof it suffices to show that $P$
preserves $n$-bijective maps. But if $f \colon X \to Y$ is
$n$-bijective, then by direct examination, so is $Tf$ (where we
recall that $T$ is the monad for strict $\omega$-categories).
Now by virtue of the cartesian $\kappa \colon P \to T$, the map
$Pf$ is a pullback of the $n$-bijective $Tf$, and hence itself
$n$-bijective.
\end{proof}

\begin{Prop}\label{propcomp}
For any natural number $n$ and $n$-computad $D = (C, X, x)$,
the map $\psi_D \colon F_{n-1}C \to F_nD$ of
equation~\eqref{obtainalpha} is $(n-1)$-bijective.
\end{Prop}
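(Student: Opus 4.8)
The plan is to deduce the result from the orthogonal factorisation systems established in the foregoing Proposition. Write $\mathcal L_{n-1}$ for the left class of the $(n-1)$-st of these, so that a morphism $u$ of $\omega\text-\cat{Cat}_{\mathrm s}$ belongs to $\mathcal L_{n-1}$ precisely when $Vu$ is $(n-1)$-bijective. As the left class of an orthogonal factorisation system, $\mathcal L_{n-1}$ is closed under coproducts and under pushout. Since $\psi_D$ is, by its construction in~\eqref{obtainalpha}, the pushout of the morphism $X \cdot \iota_n = \coprod_{x \in X} \iota_n$ along $\overline x$, it will be enough to show that $\iota_n$ itself lies in $\mathcal L_{n-1}$; for then so does the coproduct $X \cdot \iota_n$, and hence so does its pushout $\psi_D$, which is thereby $(n-1)$-bijective as required.

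Everything thus reduces to showing that $V\iota_n$ is $(n-1)$-bijective. By Definition~\ref{gencofibsom} we have $\iota_n = Kf_n$, and since $V$ is the forgetful functor for the monad $P$ we have $VK = P$; thus $V\iota_n = Pf_n$. First I would check that the generating morphism $f_n$ of globular sets is already $(n-1)$-bijective. For $n = 0$ this holds vacuously, every morphism having been declared $(-1)$-bijective. For $n \geqslant 1$, the source of $f_n$ is the boundary $(n-1)$-sphere and its target the $n$-globe $y_n$; these coincide, with $f_n$ acting as the identity, in each dimension $0, \dots, n-1$, and differ only in dimension $n$, where the source has no cells while $y_n$ has exactly one. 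Hence $(f_n)_0, \dots, (f_n)_{n-1}$ are bijections, which is precisely $(n-1)$-bijectivity of $f_n$.

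It remains to pass from $f_n$ to $Pf_n$, and here I would appeal directly to the calculation made in the proof of the foregoing Proposition, where it is shown that $P$ preserves $m$-bijective morphisms for every $m$: for any such morphism $g$, the relevant naturality square of the cartesian transformation $\kappa \colon P \to T$ exhibits $Pg$ as a pullback of $Tg$, and $Tg$ is $m$-bijective whenever $g$ is. Taking $m = n-1$ and $g = f_n$, we conclude that $V\iota_n = Pf_n$ is $(n-1)$-bijective, so that $\iota_n \in \mathcal L_{n-1}$ and the argument is complete. The only place demanding genuine attention is the dimension bookkeeping of the previous paragraph---confirming that the boundary inclusion $f_n$ is bijective in exactly the dimensions below $n$---since the two remaining ingredients, namely the stability of a left class under coproducts and pushouts and the preservation of $m$-bijective maps by $P$, are respectively formal and already in hand.
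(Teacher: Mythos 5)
Your proposal is correct and follows essentially the same route as the paper: reduce via closure of the left class of the orthogonal factorisation system under coproducts and pushouts to the single generating cofibration $\iota_n = Kf_n$, observe that $K$ sends $(n-1)$-bijectives to $(n-1)$-bijectives because $P$ preserves them (via the cartesian $\kappa \colon P \to T$), and check by direct inspection that $f_n$ is $(n-1)$-bijective. Your only addition is to spell out the identification $V\iota_n = VKf_n = Pf_n$ and the dimension count for $f_n$, both of which the paper leaves to the reader.
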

\begin{proof}
The case $n = 0$ is trivial; so suppose $n \geqslant 1$. In
this case, the map $\psi_D$ is a pushout of a coproduct of
copies of $\iota_n \colon
\partial_n \to 2_n$, and so---by standard properties of orthogonal factorisation systems---will be
$(n-1)$-bijective so long as $\iota_{n}$ is. But we defined
$\iota_{n}$ to be the image under the free functor $K$ of the
map $f_{n} \in [\cat G^\op, \cat{Set}]$, and so the result
follows by observing that $K$ preserves $(n-1)$-bijectives
(because $P$ does), and that $f_{n}$ is $(n-1)$-bijective by
direct examination.
\end{proof}
With these preliminaries in place, we may now prove our main
result.
\begin{Prop}
The comonad $\mathsf Q$ is isomorphic to the comonad generated
by the adjunction $F \dashv U \colon
\omega\text-\cat{Cat}_{\mathrm s} \to \omega\text-\cat{Cptd}$.
\end{Prop}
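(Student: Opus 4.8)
The plan is to identify the counit $FU\A \to \A$ with the universal cofibrant replacement of $\A$ by means of the recognition principle, Proposition~\ref{recognition}, and then to match the comultiplications using Proposition~\ref{inducecomult}. Write $\varepsilon \colon FU \Rightarrow \mathrm{id}$ and $\eta \colon \mathrm{id} \Rightarrow UF$ for the counit and unit of $F \dashv U$, so that the generated comonad is $(FU, \varepsilon, F\eta U)$; I must produce an isomorphism of comonads between this and $\mathsf Q = (Q, \epsilon, \Delta)$.

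First I would unwind the lifting data, exactly as in Example~\ref{ex1}. Inspection shows that to equip a strict homomorphism $f \colon Y \to \A$ with a choice of liftings with respect to $J = \{\iota_n\}$ is precisely to give, for each $n$, a section of the function sending an $n$-cell of $Y$ to the pair comprising its boundary and its image under $f$; that is, a section over the set of squares~\eqref{commsquare} (with $j = \iota_n$) whose upper edge is a boundary in $Y$ and whose lower edge is an $n$-cell of $\A$. Comparing with the defining pullback~\eqref{unpullback}, the index set $X_{(\A,n)}$ of the pushout~\eqref{obtainalpha} is exactly the set of such squares whose lower edge lies in $\A$ and whose upper edge lies in $F_{n-1}U_{n-1}\A$, and that pushout adjoins a chosen filler $2_n$ for each. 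Crucially, by Proposition~\ref{propcomp} each structure map $F_{m-1}U_{m-1}\A \to F_m U_m\A$ is $(m-1)$-bijective, so the colimiting map $F_{n-1}U_{n-1}\A \to FU\A$ is $(n-1)$-bijective, hence a bijection on boundaries of $n$-cells. Thus every boundary of an $n$-cell of $FU\A$ already lives in $F_{n-1}U_{n-1}\A$, and the adjoined generators furnish a canonical choice of liftings $\phi_\A$ for $\varepsilon_\A$: send each square to the generator filling it.

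Next I would prove $(\varepsilon_\A, \phi_\A)$ initial in $\cat{AAF}/\A$, again following Example~\ref{ex1}. Given any $(f, \phi')$ with $f \colon Y \to \A$ and liftings $\{k'_n\}$, the colimit-of-pushouts presentation of $FU\A$ shows that a strict homomorphism $h \colon FU\A \to Y$ over $\A$ is freely determined by its values on the adjoined generators, while commutation with the liftings forces $h$ to send the generator indexed by a square to the value of $k'_n$ on the corresponding square for $f$. Building $h$ by induction on the skeleta $F_nU_n\A$ via~\eqref{obtainalpha} produces the unique such map, which is automatically a morphism of $\cat{AAF}/\A$. By Proposition~\ref{recognition} this exhibits $\varepsilon_\A$ as the universal cofibrant replacement of $\A$, yielding a natural isomorphism $FU \cong Q$ carrying $\varepsilon$ to $\epsilon$, with naturality and canonicity following from the uniqueness inherent in initiality.

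Finally I would compare the comultiplications. By Proposition~\ref{inducecomult}, $\Delta_\A$ is the unique morphism of $\cat{AAF}/\A$ from $(\varepsilon_\A, \phi_\A)$ to $(\varepsilon_\A . \varepsilon_{FU\A}, \phi_\A \bullet \phi_{FU\A})$, so it suffices to check that $F\eta_{U\A}$ is such a morphism: that it lies over $\A$, which is immediate from the triangle identities for $F \dashv U$, and that it respects the two choices of liftings. For the latter I would compute $\phi_\A \bullet \phi_{FU\A}$ on the generators of $FUFU\A$ and observe that $F\eta_{U\A}$ sends each generator of $FU\A$ to the corresponding ``doubled'' generator, making preservation manifest, precisely as in the Example following Proposition~\ref{inducecomult}. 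Uniqueness then gives $\Delta_\A = F\eta_{U\A}$, and together with the previous step this yields the desired isomorphism of comonads. The main obstacle is the bookkeeping underlying the second and third paragraphs: establishing $(n-1)$-bijectivity of $F_{n-1}U_{n-1}\A \to FU\A$ so that the skeletal recursion closes up, and then verifying that the lifting-respecting map out of $FU\A$ is well defined and unique at each stage.
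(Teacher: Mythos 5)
Your proposal follows essentially the same route as the paper: it invokes Proposition~\ref{recognition} to exhibit $\varepsilon_\A \colon FU\A \to \A$ as the universal cofibrant replacement, constructs the choice of liftings from the pushout generators using the $(n-1)$-bijectivity supplied by Proposition~\ref{propcomp}, proves initiality in $\cat{AAF}/\A$ by the same skeletal recursion, and handles the comultiplication via Proposition~\ref{inducecomult} (a step the paper declares a ``long but straightforward calculation'' and omits, whereas you sketch it by identifying $\Delta_\A$ with $F\eta_{U\A}$). The argument is correct and matches the paper's proof in structure and substance.
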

\begin{proof}
Let there be given a weak $\omega$-category $\A$. We will use
Proposition~\ref{recognition} to show that the counit morphism
$\epsilon_\A \colon FU\A \to \A$ provides a universal cofibrant
replacement of $\A$. Thus we must equip $\epsilon_\A$ with a
choice of liftings against the generating cofibrations which
makes it into an initial object of $\cat{AAF} / \A$, the
category of algebraic acyclic fibrations into $\A$.

We first observe that to equip a strict homomorphism $f \colon
\X \to \A$ with a choice of liftings against the generating
cofibrations is to give, for each $n \in \mathbb N$, a section
of the function $((\rho_n)_\X, E_nf) \colon E_n\X \to B_n\X
\times_{B_n\A} E_n\A$. Thus to equip $\epsilon_\A \colon FU\A
\to \A$ with a choice of liftings is to give functions
\begin{equation}\label{kn}
    k_n \colon B_n FU\A \times_{B_n\A} E_n \A \to E_n FU\A
\end{equation}
for each $n \in \mathbb N$ such that $ ((\rho_n)_{FU\A}, E_n
\epsilon_\A) \circ k_n$ is the identity. Now, $FU\A$ is
obtained as the following colimit:
\begin{equation*}
    \cd[@C+1em]{
      F_{-1} U_{-1} \A \ar[r]^{\psi_{U_0 \A}} \ar[dr]_{\alpha_{-1}} & F_0 U_0 \A \ar[r]^{\psi_{U_1\A}} \ar[d]_{\alpha_0} & F_1 U_1 \A \ar[r] \ar[dl]^{\alpha_1} & \cdots \\
      & F U \A
    }
\end{equation*}
and $\epsilon_\A \colon FU\A \to \A$ as the unique map
satisfying $\epsilon_\A . \alpha_n = (\epsilon_n)_\A$ for all
$n \geqslant -1$. Given $n \in \mathbb N$, we have by
Proposition~\ref{propcomp} that $\psi_{{U_m}\A}$ is
$(n-1)$-bijective for each $m \geqslant n$, from which it
follows by standard properties of orthogonal factorisation
systems that $\alpha_{n-1}$ is also $(n-1)$-bijective.
Moreover, the functor $B_n \colon \omega\text-\cat{Cat}_\mathrm
s \to \cat{Set}$ sends $(n-1)$-bijectives to isomorphisms, so
that $B_n \alpha_{n-1} \colon B_n F_{n-1} U_{n-1} \A \to B_n
FU\A$ is an isomorphism: and so composing the pullback
square~\eqref{unpullback} with this map yields a pullback
square
%
%
%In order to obtain a choice of lifting for $\epsilon_\A$, we
%need the fact that for each $n$, the map $\alpha_n$
%in~\eqref{colimdiag} is $n$-bijective. To see this, it suffices
%to show that for each $n$, the map $\psi_{U_{n+1}\A}$ is
%$n$-bijective; for then in the diagram
%\begin{equation*}
%    \cd[@C+1.5em]{
%      F_n U_n \A \ar[r]^{\id} \ar[d]|{\psi_{U_{n+1} \A}} &
%      F_n U_n \A \ar[r]^{\id} \ar[d]|{\psi_{U_{n+2} \A} . \psi_{U_{n+1} \A}} &
%      \cdots \\
%      F_{n+1} U_{n+1} \A \ar[r]_-{\psi_{U_{n+2} \A}} &
%      F_{n+2} U_{n+2} \A \ar[r]_-{\psi_{U_{n+3} \A}}  &
%      \cdots \\
%    }
%\end{equation*}
%all the vertical arrows are $n$-bijective, and thus---by a
%standard property of orthogonal factorisation systems---so is
%their colimit $\alpha_n \colon F_n U_n \A \to FU\A$. Now, that
%$\psi_{U_{n+1}\A}$ is $n$-bijective will follow \emph{a
%fortiori} if we can show that,
\begin{equation*}
    \cd[@C+2em]{
      X_{(\A,n)} \ar[r]^{u_{(\A,n)}} \ar[d]_{B_n \alpha_{n-1} \circ x_{(\A,n)}} &
      E_n\A \ar[d]^{(\rho_n)_\A} \\
      B_n FU\A \ar[r]_-{B_n \epsilon_\A} & B_n \A\rlap{ .}
    }
\end{equation*}
So to give $k_n$ as in~\eqref{kn} is equally well to give $k'_n
\colon X_{(\A, n)} \to E_n FU\A$ such that
\begin{equation}\label{equalities}
    (\rho_n)_{FU\A} \circ k'_n = B_n \alpha_{n-1} \circ x_{(\A, n)} \qquad \text{and} \qquad E_n \epsilon_\A \circ k'_n = u_{(\A, n)}\rlap{ ;}
\end{equation}
and we may obtain such a $k'_n$ as the transpose of the
composite
\begin{equation*}
    X_{(\A, n)} \cdot 2_n \xrightarrow{\phi_{U_n \A}} F_n U_n \A \xrightarrow{\alpha_n} FU\A\rlap{ ,}
\end{equation*}
under the adjunction $(\thg) \cdot 2_n \dashv E_n \colon
\omega\text-\cat{Cat}_{\mathrm s} \to \cat{Set}$. A
straightforward calculation now verifies the equalities
in~\eqref{equalities}.

Thus we have equipped $\epsilon_\A$ with a choice of liftings
$(k_n)$ against the generating cofibrations; it remains to show
that this makes $(\epsilon_\A, k_n)$ into an initial object of
$\cat{AAF} / \A$. So suppose that $f \colon \X \to \A$ is a
strict homomorphism equipped with a choice of liftings $j_n
\colon B_n\X \times_{B_n\A} E_n\A \to E_n\X$. We shall define a
morphism $\beta \colon FU\A \to \X$ satisfying $f.\beta =
\epsilon$. To do so is equally well to give a cocone
\begin{equation}\label{cocone}
    \cd[@C+1em]{
      F_{-1} U_{-1} \A \ar[r]^{\psi_{U_0 \A}} \ar[dr]_{\beta_{-1}} & F_0 U_0 \A \ar[r]^{\psi_{U_1\A}} \ar[d]_{\beta_0} & F_1 U_1 \A \ar[r] \ar[dl]^{\beta_1} & \cdots \\
      & \X
    }
\end{equation}
satisfying $f . \beta_n = \epsilon_n$ for all $n \geqslant -1$.
We do so by recursion on $n$. For the base case, we take
$\beta_{-1}$ to be the unique map from the initial object
$F_{-1}U_{-1} \A$. For the inductive step, let $n \geqslant 0$
and suppose that we have already defined $\beta_{n-1}$
satisfying $f . \beta_{n-1} = \alpha_{n-1}$. By virtue of the
pushout diagram~\eqref{obtainalpha} and the requirement
that~\eqref{cocone} should be a cocone, to give $\beta_n \colon
F_n U_n \A \to \X$ is equally well to give a morphism $b_n
\colon X_{(\A, n)} \cdot 2_n \to \X$ making the square
\begin{equation*}
\cd{
    X_{(\A, n)} \cdot \partial_n \ar[r]^{\overline{x_{(\A, n)}}} \ar[d]_{X \cdot \iota_n} & F_{n-1} U_{n-1} \A \ar[d]^-{\beta_{n-1}} \\
    X_{(\A, n)} \cdot 2_n \ar[r]_-{b_n} & \X
   }
\end{equation*}
commute; which, taking transposes under adjunction, is equally
well to give a morphism $b'_n \colon X_{(\A, n)} \to E_n\X$
making
\begin{equation}\label{newcommute}
\cd{
    X_{(\A, n)} \ar[r]^-{x_{(\A, n)}} \ar[d]_{b'_n} & B_n F_{n-1} U_{n-1} \A \ar[d]^-{B_n\beta_{n-1}} \\
    E_n\X \ar[r]_-{(\rho_n)_\X} & B_n\X
   }
\end{equation}
commute. To do so, we consider the following diagram:
\begin{equation}\label{xwz}
\cd{
    X_{(\A, n)} \ar[r]^-{u_{(\A, n)}} \ar[d]_{B_n \beta_{n-1} \circ x_{(\A,n)}} & E_n \A \ar[d]^-{(\rho_n)_\A} \\
    B_n\X \ar[r]_-{B_n f} & B_n\A\rlap{ .}
   }
\end{equation}
It commutes by~\eqref{unpullback} and the condition
$f.\beta_{n-1} = \epsilon_{n-1}$, and so we induce a map
$X_{(A, n)} \to B_n\X \times_{B_n\A} E_n\A$ by universal
property of pullback. We now define $b'_n$ to be the composite
of this with $j_n \colon B_n\X \times_{B_n\A} E_n\A \to E_n\X$.
Commutativity in~\eqref{xwz}, together with the fact that $j_n$
is a section now imply both that~\eqref{newcommute} is
commutative and that $f . \beta_n = \alpha_n$ as required. This
completes the construction of $\beta \colon FU\A \to \X$; and
further calculation now shows that this map preserves the
choices of liftings for $\epsilon_\A$ and for $f$, and
moreover, that it is the unique morphism $FU\A \to \X$ over
$\A$ with this property.

Therefore, by Proposition~\ref{recognition}, we have shown that
the functor and counit part of the universal cofibrant
replacement comonad coincide (up to isomorphism) with the
functor and counit part of the comonad induced by the
adjunction \mbox{$F \dashv U \colon
\omega\text-\cat{Cat}_{\mathrm s} \to \omega\text-\cat{Cptd}$}.
To show that the same is true for the comultiplication is a
long but straightforward calculation using
Proposition~\ref{comultt} which we omit.
\end{proof}

We end the paper with some brief remarks on higher cells. We
have a functor $D \colon \cat{G} \to
\cat{\omega\text-\cat{Cat}}$ obtained as the composite
\begin{equation*}
    \cat{G} \xrightarrow{y_{(\thg)}} \hat{\cat{G}} \xrightarrow{\text{free}} T\text-\cat{Alg} \xrightarrow{\kappa^\ast} P\text-\cat{Alg} = \cat{\omega\text-\cat{Cat}}_\mathrm s\ \hookrightarrow \omega\text-\cat{Cat}\text.
\end{equation*}
Observe that $\omega\text-\cat{Cat}$ has products---because
$\omega\text-\cat{Cat}_\mathrm s$ has them and the inclusion
map preserves them, being a right adjoint---so that, as in
\cite[Definition 8.9]{Batanin1998Monoidal}, we may define an
\emph{$m$-cell} from $\A$ to $\B$ to be a homomorphism $\A
\times Dm \to \B$. Whilst it is unclear how one should compose
such $m$-cells in general, there is one form of composition we
do have: namely, that along a $0$-cell boundary.
\begin{Prop}
There is a category $\omega\text-\cat{Cat}_m$ whose objects are
weak $\omega$-categories and whose morphisms $\A \to \B$ are
$m$-cells from $\A$ to $\B$.
\end{Prop}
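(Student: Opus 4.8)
The plan is to recognise $\omega\text-\cat{Cat}_m$ as the co-Kleisli category of the comonad $(\thg) \times Dm$ on $\omega\text-\cat{Cat}$ determined by the object $Dm$. This produces a category for free, with morphisms of exactly the required form, and realises the composition as the one along a $0$-cell boundary; the work then reduces to assembling standard facts.

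First I would record the finite products. As observed immediately above, $\omega\text-\cat{Cat}$ has binary products, computed as in $\omega\text-\cat{Cat}_\mathrm s$ and preserved by the identity-on-objects inclusion $J \colon \omega\text-\cat{Cat}_\mathrm s \to \omega\text-\cat{Cat}$, this being a right adjoint. The same reasoning applied to the empty product shows that the terminal object $1$ of $\omega\text-\cat{Cat}_\mathrm s$ remains terminal in $\omega\text-\cat{Cat}$. Thus $\omega\text-\cat{Cat}$ has all finite products. Now in any category with finite products each object is canonically a comonoid for the cartesian structure; in particular $Dm$ carries the diagonal $\Delta \colon Dm \to Dm \times Dm$ as comultiplication and the unique map $Dm \to 1$ as counit. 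It follows that $(\thg) \times Dm$ underlies a comonad on $\omega\text-\cat{Cat}$, whose counit at $\A$ is the projection $\pi_\A \colon \A \times Dm \to \A$ and whose comultiplication at $\A$ is $\id_\A \times \Delta \colon \A \times Dm \to \A \times Dm \times Dm$.

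I would then define $\omega\text-\cat{Cat}_m$ to be the co-Kleisli category of this comonad. By construction its objects are the weak $\omega$-categories, and a morphism $\A \to \B$ is a map $\A \times Dm \to \B$ of $\omega\text-\cat{Cat}$, that is, precisely an $m$-cell from $\A$ to $\B$; the co-Kleisli laws then guarantee at once that this data forms a category. Unwinding the definitions, the identity on $\A$ is the projection $\pi_\A$, while the composite of $F \colon \A \times Dm \to \B$ and $G \colon \B \times Dm \to \C$ is
\[
  \A \times Dm \xrightarrow{\id_\A \times \Delta} \A \times Dm \times Dm \xrightarrow{F \times \id_{Dm}} \B \times Dm \xrightarrow{G} \C\ \text.
\]
A brief inspection identifies this with the composition of $m$-cells along their common $0$-cell boundary. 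Since everything rests solely on the existence of finite products, there is no genuine obstacle; the only points demanding care are the presence of the terminal object (needed for the counit) and the bookkeeping identifying the co-Kleisli composite with the $0$-boundary composition, both of which are routine.
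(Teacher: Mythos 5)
Your proposal is correct and is essentially the paper's own proof: the paper simply says ``take the co-Kleisli category of the comonad $(\thg)\times Dm$ on $\omega\text-\cat{Cat}$'', relying on the finite products established just beforehand. You have merely made explicit the comonoid structure on $Dm$ and the resulting co-Kleisli composition, which the paper leaves implicit.
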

\begin{proof}
Take the co-Kleisli category of the comonad $(\thg) \times Dm$
on $\omega\text-\cat{Cat}$.
\end{proof}
\begin{Cor}
$\omega$-$\cat{Cat}$ is enriched over the cartesian monoidal
category of globular sets.
\end{Cor}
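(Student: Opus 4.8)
The plan is to exhibit directly a category enriched over $[\cat G^\op, \cat{Set}]$ whose objects are the weak $\omega$-categories and whose hom-objects have, as their $m$-cells, the $m$-cells of the preceding Proposition. Recall that the cartesian product of globular sets is computed pointwise and that the monoidal unit is the terminal globular set $1$. For weak $\omega$-categories $\A$ and $\B$ I would define a globular set $\underline{\omega\text-\cat{Cat}}(\A, \B)$ by taking its set of $m$-cells to be $\omega\text-\cat{Cat}(\A \times Dm, \B)$; the requisite source and target operations, and more generally the action of a morphism $\alpha \colon m \to m'$ of $\cat G$, are given by restriction along $1_\A \times D\alpha \colon \A \times Dm \to \A \times Dm'$. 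That this assignment is functorial in $\alpha$—so that $\underline{\omega\text-\cat{Cat}}(\A,\B)$ is indeed a globular set—is immediate from the functoriality of $D$ and of the product.

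Next I would supply the composition and identities. At each level $m$ these are the co-Kleisli composition and units for the product comonad $(\thg) \times Dm$ used in the preceding Proposition: given $f \colon \A \times Dm \to \B$ and $g \colon \B \times Dm \to \C$, their composite is $g \circ (f \times 1_{Dm}) \circ (1_\A \times \Delta_{Dm})$, where $\Delta_{Dm} \colon Dm \to Dm \times Dm$ is the diagonal, and the identity at $\A$ is the projection $\A \times Dm \to \A$. To see that these assemble into morphisms of globular sets $\underline{\omega\text-\cat{Cat}}(\B,\C) \times \underline{\omega\text-\cat{Cat}}(\A,\B) \to \underline{\omega\text-\cat{Cat}}(\A,\C)$ and $1 \to \underline{\omega\text-\cat{Cat}}(\A,\A)$, I must check naturality in $m$; this reduces to the naturality of the diagonal and of the projections, expressed by $(D\alpha \times D\alpha) \circ \Delta_{Dm} = \Delta_{Dm'} \circ D\alpha$ and its unit analogue, which hold in any category with finite products.

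Finally, the associativity and unit axioms for an enriched category are equations between maps of globular sets, and hence may be verified one level $m$ at a time. But at level $m$ the composition and identities are exactly those of the co-Kleisli category $\omega\text-\cat{Cat}_m$ of the preceding Proposition, which is a genuine category; so the axioms hold automatically, yielding the desired $[\cat G^\op, \cat{Set}]$-enrichment. As a sanity check, $D0$ is the terminal weak $\omega$-category, whence $\A \times D0 \cong \A$ and the $0$-cells of $\underline{\omega\text-\cat{Cat}}(\A,\B)$ are precisely the $\omega$-homomorphisms $\A \to \B$.

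The genuinely new content is carried entirely by the preceding Proposition, and the only step requiring any care is the naturality of composition and identities in $m$—which is itself just a formal consequence of the fact that diagonals and projections are natural. Alternatively one could package everything by observing that a $[\cat G^\op, \cat{Set}]$-enriched category is the same datum as a functor $\cat G^\op \to \cat{Cat}$ that is constant on objects and identity-on-objects on morphisms, and that $m \mapsto \omega\text-\cat{Cat}_m$ is exactly such a functor; but I expect the direct construction above to be the clearer of the two to write out.
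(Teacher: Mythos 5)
Your proposal is correct and is essentially the paper's own argument written out in full: the paper likewise takes $[\A,\B]_m \defeq \omega\text-\cat{Cat}(\A \times Dm, \B)$ with composition and identities at level $m$ given by the co-Kleisli category $\omega\text-\cat{Cat}_m$ of the preceding Proposition, leaving the globular structure and its compatibility with composition (your naturality-in-$m$ check) implicit. The extra details you supply are the right ones and all check out.
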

\begin{proof}
The hom object from $\A$ to $\B$ is the globular set $[\A, \B]$
with
\begin{equation*}
    [\A, \B]_m \defeq \cat{\omega\text-\cat{Cat}}(\A \times Dm,\, \B)\ \text;
\end{equation*}
whilst composition and identities at dimension $m$ are given as
in $\omega$-$\cat{Cat}_m$.
\end{proof}

%%%% Bibliography

\end{document}